\numberwithin{equation}{section}
\newtheorem{thm}{Theorem}[subsection]
\newtheorem{lem}[thm]{Lemma}
\newtheorem{prop}[thm]{Proposition}
\newtheorem{cor}[thm]{Corollary}
\theoremstyle{definition}
\newtheorem{example}[thm]{Example}
\newtheorem{rmk}[thm]{Remark}
\newcommand{\Z}{\mathbb{Z}} 
\renewcommand{\k}{\mathbf{k}}
\newcommand{\cA}{\mathcal{A}}
\newcommand{\Exc}{{\bf Exc}}
\newcommand{\Pf}{\noindent {\it Proof}}
\newcommand{\id}{\operatorname{id}}
\newcommand{\ov}{\overline}
\newcommand{\FF}{{\mathcal F}}
\newcommand{\WW}{{\mathcal W}}
\newcommand{\EE}{{\mathcal E}}
\newcommand{\MM}{{\mathcal M}}
\newcommand{\TT}{{\mathcal T}}
\newcommand{\PP}{{\mathcal P}}
\renewcommand{\SS}{{\mathcal S}}
\newcommand{\LL}{{\mathcal L}}
\newcommand{\Hom}{\operatorname{Hom}}
\newcommand{\Ext}{\operatorname{Ext}}
\newcommand{\Aut}{\operatorname{Aut}}
\newcommand{\la}{\lambda}
\newcommand{\C}{{\mathbb C}}
\newcommand{\wt}{\widetilde}
\newcommand{\ot}{\otimes}
\newcommand{\sub}{\subset}
\newcommand{\ed}{\qed\vspace{3mm}}
\newcommand{\Qcoh}{\operatorname{Qcoh}}
\renewcommand{\k}{\mathbf{k}}
\newcommand{\bC}{{\mathcal C}}
\renewcommand{\mod}{\operatorname{mod}}
\newcommand{\und}{\underline}
\newcommand{\OO}{{\mathcal O}}
\newcommand{\DD}{{\mathcal D}}
\newcommand{\II}{{\mathcal I}}
\newcommand{\hra}{\hookrightarrow}
\newcommand{\lan}{\langle}
\newcommand{\ran}{\rangle}
\newcommand{\Coh}{\operatorname{Coh}}
\newcommand{\GG}{{\mathcal G}}
\newcommand{\CC}{{\mathcal C}}
\renewcommand{\P}{{\mathbb P}}
\newcommand{\si}{\sigma}
\newcommand{\Perf}{\operatorname{Perf}}
\newcommand{\ba}{{\bf a}}
\title[Auslander orders and partially wrapped Fukaya categories]{Auslander orders over nodal stacky curves and partially wrapped Fukaya categories}
\author{Yank\i\ Lekili} 
\author{Alexander Polishchuk} 
\address{King's College London}
\address{University of Oregon and National Research University Higher School of Economics}
\begin{document} 

\begin{abstract} It follows from the work of Burban and Drozd \cite{BD} that for nodal curves
    $C$, the derived category of modules over the Auslander order
    $\mathcal{A}_C$ provides a categorical (smooth and proper) resolution of
    the category of perfect complexes $\mathrm{Perf}(C)$. On the A-side, it follows from the work of
    Haiden-Katzarkov-Kontsevich \cite{HKK} that for punctured surfaces $X$ with stops $\Lambda$ at their boundary, the partially wrapped Fukaya
    category $\mathcal{W}(X,\Lambda)$ provides a categorical (smooth and proper) resolution of the compact Fukaya
    category $\mathcal{F}(X)$. Inspired by this analogy, we establish an equivalence between the derived category of
modules over the Auslander orders over certain nodal stacky curves and partially wrapped Fukaya categories
associated to punctured surfaces of arbitrary genus equipped with stops at their boundary.
    As an application, we deduce equivalences between derived categories of coherent sheaves
    (resp. perfect complexes) on
    such nodal stacky curves and the wrapped (resp. compact) Fukaya categories of punctured surfaces
    of arbitrary genus.
\end{abstract}

\maketitle

\section*{Introduction}

Let $X$ be a Liouville domain. There are two flavours of Fukaya categories (defined over $\mathbb{Z}$)
that one can associate to
$X$: 
\begin{align*}
    &\mathcal{F}(X): \text{\ the (split-closed) derived Fukaya category of compact exact Lagrangians in\ }X, \\ 
    &\mathcal{W}(X): \text{\ the (split-closed) derived wrapped Fukaya category of\ }X. 
\end{align*}
(Here, we are suppressing the extra choices of grading structures on $X$ and brane structures on
the objects).

By construction, $\mathcal{F}(X)$ embeds as a full
subcategory of $\mathcal{W}(X)$, but there are also additional objects in $\mathcal{W}(X)$
corresponding to non-compact Lagrangians in $X$. 

On the other hand, given a scheme (or an algebraic stack) $\mathcal{C}$, one can associate two
pre-triangulated DG-categories: 
\begin{align*}
    &\mathrm{Perf}(\mathcal{C}): \text{\ the (DG-)derived category of perfect complexes on\ }
    \mathcal{C},\\
    &D^b (\Coh \mathcal{C}): \text{\ the (DG-)derived category of coherent sheaves
    on\ } \mathcal{C},
\end{align*}

In a previous work \cite{LP} (cf. \cite{LPer}), the authors studied these categories for $X =
T_d$ a $d$-holed
torus, and $\mathcal{C} = \mathcal{C}_d$ the standard (N\'eron) $d$-gon (For $d=1$, $\mathcal{C}_1$ is the nodal projective cubic
$\{y^2 z +xyz= x^3 \}$ in $\mathbb{P}^2_\mathbb{Z}$), and proved the homological 
mirror symmetry statement that identifies the following triangulated categories, all defined over $\mathbb{Z}$: 
\begin{align}\label{Tn-mirror-eq} 
    &\mathcal{F}(T_d) \simeq \mathrm{Perf}(\mathcal{C}_d) \\ \label{Tn-mirror-bis-eq}
    &\mathcal{W}(T_d) \simeq D^b(\Coh \mathcal{C}_d).
\end{align}

In recent years, a theory of partially wrapped Fukaya categories was developed
(\cite{auroux},\cite{sylvan}, \cite{HKK}, \cite{EkLe}). This depends on
an extra choice of a Legendrian submanifold $\Lambda$ at the boundary of $X$. 
In the case that $X$ is (real) 2-dimensional, $\Lambda$ is simply determined by picking boundary marked
points. In \cite{HKK}, Haiden-Katzarkov-Kontsevich gave a combinatorial description of
the resulting partially wrapped Fukaya categories when $X$ is a (real) 2-dimensional
symplectic manifold with non-empty boundary and a choice of $\Lambda$ in its boundary. We
will denote such \emph{partially wrapped Fukaya categories} by
\[ \mathcal{W}(X, \Lambda) \]
since taking $\Lambda$ to be empty gives the wrapped Fukaya category $\mathcal{W}(X)$.

In view of equivalence \eqref{Tn-mirror-bis-eq}, involving the wrapped Fukaya category
$\mathcal{W}(T_d)$, it is natural to
wonder about the homological mirror symmetry for the categories $\mathcal{W}(X, \Lambda)$ when
$X=T_d$ and $\Lambda$ is a
number of points on each boundary component of $T_d$. We specify the choice of such points by a
$d$-tuple of integers $(m_1,m_2,\ldots, m_d)$. In dimension two, it follows by construction
that $\mathcal{W}(T_n, \Lambda)$ depends only on the $d$-tuple $(m_1,m_2,\ldots, m_d)$. More
generally, for arbitrary $g\ge 0$ we denote the partially wrapped Fukaya categories of the $d$-holed
genus $g$ surface with $(m_1,m_2,\ldots,m_d)$ marked points on its boundary by
\[ \mathcal{W}(g;m_1,m_2,\ldots, m_d). \]
Up to equivalence (and choice of grading structures), 
these are all the partially wrapped Fukaya categories in dimension two. 
When there is a repetition on $m_i$, say, $m_{j+1}=m_{j+2}= \ldots =m_{j+r}=m$, 
we will use the abbreviation $(m)^{r}$ instead of writing $r$ consecutive $m$'s. 

On the A-side, we describe our surfaces as built by taking a sequence of numbers
$(r_0,r_1,\ldots,r_n)$ (resp. $(r_1,r_2,\ldots,r_n)$) and considering $n$ annuli with
$(r_i,r_{i+1})$ marked points, and connecting them via strips so as to form a chain (resp. a
ring) of annuli. The way that the two neighbouring annuli are connected via strips are
encoded by permutations $\sigma_{i} \in \mathfrak{S}_{r_i}$. We find a generating set
of Lagrangians of the partially wrapped Fukaya category adapted to this description and follow the combinatorial description given in
\cite{HKK} to explicitly compute a quiver algebra representing these categories. The permutations
$\{ \sigma_{i} \} $ play a crucial role to access higher genus surfaces - if all of them were taken
to be identity, then one would only get genus 0 or 1 surfaces. 

On the B-side we consider categorical resolutions of the perfect derived
categories of certain nodal stacky curves.
The nodal stacky curves in question are slight generalizations of the balloon chains and rings introduced in  \cite{STZ}.
We generalize to such curves the construction of Auslander orders given in the work of Burban and Drozd
\cite{BD} for the usual nodal curves. These categories of modules over the          Auslander orders
turn out to match partially wrapped Fukaya categories $\WW(g;m_1,m_2,\ldots,m_d)$ for appropriate $g$ and $m_i$.

Recall (see \cite{STZ}) that a {\it balloon} $B(a,b)$, for $a,b >0$, is a weighted projective line with
two stacky points $q_-$ and $q_+$ such that $\Aut(q_-)=\mu_a$, $\Aut(q_+)=\mu_b$.
The {\it balloon chain} (resp., {\it balloon ring} $R(r_1,\ldots,r_n)$)
is the union of balloons $B(r_0,r_1), \ldots, B(r_{n-1},r_n)$ (resp.,
$B(r_1,r_2), \ldots, B(r_{n-1},r_n), B(r_n,r_1)$) glued along
their stacky points so that they form a chain (resp., ring). It is also required that
every node locally looks like the quotient of $xy=0$ by
the action of $\mu_r$ of the form $\zeta\cdot (x,y)=(\zeta^k x,\zeta y)$ for some $k\in(\Z/r\Z)^*$.
Note that in \cite{STZ} only balanced stacky nodes were allowed (those with $k=-1$). As we will see
below, the extension to non-balanced case is crucial to construct mirrors to punctured surfaces
of genus $g>1$.  

We denote the above balloon chain (resp., ring) by $C(r_0,\ldots,r_n;k_1,\ldots,k_{n-1})$ 
(resp., $R(r_1,\ldots,r_n;k_1,\ldots,k_n)$), where $k_i\in(\Z/r\Z)^*$ describe the type of the stacky node
connecting $B(r_{i-1},r_i)$ and $B(r_i,r_{i+1})$.

In the case of $C(r_0,\ldots,r_n;k_1,\ldots,k_{n-1})$ we also allow the possibility for $r_0=0$ (resp., $r_n=0$): in
this case $B(0,r_1)$ (resp. $B(r_{n-1},0)$) denotes
the weighted affine line $\mathbb{A}^1(r_1)=B(1,r_1)\setminus\{q_-\}$ (resp. 
$\mathbb{A}^1(r_{n-1})=B(r_{n-1},1)\setminus\{q_+\}$). 

The {\it Auslander order} over a reduced curve $C$ is defined in \cite{BD} by the   formula
\begin{equation}\label{aus-def-eq}
    \cA_C=\left(\begin{matrix} \wt{\OO} & \II \\ \wt{\OO} & \OO_C
\end{matrix}\right),
\end{equation}
where $\wt{\OO}$ is the push-forward of the structure sheaf of the normalization of $C$ and
$\II\sub\OO_C$ is the ideal sheaf of the singular locus. 
We apply the similar definition to our stacky curves $C(r_0,\ldots,r_n;k_1,\ldots,k_{n-1})$ and
$R(r_1,\ldots,r_n;k_1,\ldots,k_n)$.

Now we can state our first main result. Let us work over a field $\k$.

\medskip

\noindent
{\bf Theorem A}. {\it For $\bC=C(r_0,\ldots,r_n;k_1,\ldots,k_{n-1})$ with $r_0, r_n \geq 0$ and $r_i \geq 1$ for
$i=1,\ldots, n-1$, we have an equivalence
\[ D^b (\cA_\bC-\mod) \simeq \WW(g;r_0,(2d_1)^{p_1},\ldots,(2d_{n-1})^{p_{n-1}},r_n), \]
where $p_i=\gcd(k_i+1,r_i)$, $d_i=r_i/p_i$, and 
\begin{equation}\label{g-lin-eq}
 g= \frac{1}{2}\sum_{i=1}^{n-1}(r_i-p_i).
 \end{equation}
For $\bC=R(r_1,\ldots,r_n;k_1,\ldots,k_n)$, with $r_i \geq 1$, we have an equivalence
\[ D^b (\cA_\bC-\mod) \simeq  \WW(g;(2d_1)^{p_1},\ldots,(2d_n)^{p_n}), \]
where $d_i$ and $p_i$ are defined in the same way as before, and
\begin{equation}\label{g-circ-eq}
g= 1+\frac{1}{2}\sum_{i=1}^{n}(r_i-p_i).
 \end{equation}
In both cases the equivalence holds for certain choice of grading on the partially wrapped Fukaya category
(that depends on $k_i$'s).
}

We observe that changing the gluing along nodes by varying the value of $k_i$, mirrors the use of
permutations $\{ \sigma_i \}$ in attaching strips between cylinders (though, this only covers
certain permutations). In particular, the balanced nodes mirror attaching strips via identity permutation. Indeed, note that in the case of balanced nodes, i.e., when $k_i\equiv -1 \mod r_i$ for all $i$, 
for $\bC=C(r_0,\ldots,r_n):=C(r_0,\ldots,r_n;-1,\ldots,-1)$ 
we get an equivalence involving genus $0$ surface:
\[ D^b (\cA_\bC-\mod) \simeq \WW(0;r_0,(2)^{r_1+\ldots+r_{n-1}},r_n). \]
For $\bC=R(r_1,\ldots,r_n):=R(r_1,\ldots,r_n;-1,\ldots,-1)$ we get an equivalence involving genus $1$ surface:
\[ D^b (\cA_\bC-\mod) \simeq  \WW(1;(2)^{r_1+\ldots+r_n}). \]

On the A-side one can also consider the \emph{infinitesimally wrapped Fukaya category} (cf. \cite{NZ})
\[ \mathcal{F}(X, \Lambda). \] 
We have functors
\[ \mathcal{F}(X) \to \mathcal{F}(X,\Lambda) \to \mathcal{W}(X,\Lambda) \to \mathcal{W}(X),\]
where the first two functors are full and faithful embeddings, and the
last one is a localization functor to the quotient
of $\mathcal{W}(X,\Lambda)$ by the full subcategory generated by objects supported near $\Lambda$
(see \cite[Sec. 3.5]{HKK}). 

Let us denote by 
$$\mathcal{F}(g;m_1,m_2,\ldots,m_d)\sub \WW (g;m_1,m_2,\ldots,m_d)$$
the full $A_\infty$-subcategory consisting of direct sums of objects corresponding to Lagrangians
that do not end on the boundary components with no marked points (i.e., such that the corresponding
$m_i=0$). The notation is chosen to emphasize that this full $A_\infty$-subcategory is the essential
image of the full and faithful functor $\mathcal{F}(X;\Lambda) \to \mathcal{W}(X;\Lambda)$ from
the infinitesimally wrapped Fukaya category to partially wrapped Fukaya category. In Section
\ref{dualities}
we also prove that there is a natural quasi-equivalence
$$\FF(g;m_1,\ldots,m_d)\xrightarrow{\ \sim\ } \mathrm{Fun^{ex}}(\WW(g;m_1,\ldots,m_d)^{op},\mathrm{Perf}\, \k).$$
where $\mathrm{Fun}^{ex}$ stands for DG-category of exact functors.

As an application of Theorem A, we deduce an equivalence of the perfect derived category of $\bC$
(resp., the derived category of coherent sheaves on $\bC$) with the appropriate infinitesimally wrapped Fukaya
category (resp., wrapped Fukaya category).
In particular, we obtain simpler proofs of mirror symmetry for
punctured surfaces of genus $0$ and $1$ (see \cite{AAEKO} and \cite[Thm.\ B]{LP}), and we also get a homological mirror symmetry result 
for all surfaces of genus $g>1$ with at least one puncture. 

\medskip

\noindent
{\bf Theorem B.} {\it For $\bC=C(r_0,\ldots,r_n;k_1,\ldots,k_{n-1})$ with $r_0, r_n \geq 0$ and $r_i \geq 1$ for
$i=1,\ldots, n-1$, 
we have equivalences (with some choice of grading on the relevant Fukaya categories)
$$D^b(\Coh \bC)\simeq \WW(g;r_0,(0)^{p_1+\ldots+p_{n-1}},r_n),$$
$$\Perf_c(\bC)\simeq \mathcal{F}(g;r_0,(0)^{p_1+\ldots+p_{n-1}},r_n),$$
where $p_i=\gcd(k_i+1,r_i)$, $g$ is given by \eqref{g-lin-eq}, and $\Perf_c(\bC)$ is the full subcategory in $\Perf(\bC)$ consisting of complexes with
proper support (the condition on support is only needed if $r_0=0$ or $r_n=0$).

For $\bC=R(r_1,\ldots,r_n;k_1,\ldots,k_n)$, with $r_i \geq 1$,  we have equivalences
$$D^b(\Coh \bC)\simeq \WW(g;(0)^{p_1+\ldots+p_n}),$$
$$\Perf(\bC)\simeq \mathcal{F}(g;(0)^{p_1+\ldots+p_n})$$
where $p_i=\gcd(k_i+1,r_i)$ and $g$ is given by \eqref{g-circ-eq}.
In particular, for any $g\ge 2$, choosing $k\in(\Z/(2g-1))^*$ such that $k+1\in (\Z/(2g-1))^*$ (such $k$ always exists), 
for every $n\ge 1$ we get equivalences
$$D^b(\Coh R(2g-1,(1)^{n-1};k))\simeq \WW(g;(0)^{n}),$$
$$\Perf(R(2g-1,(1)^{n-1};k))\simeq \FF(g;(0)^{n}).$$
}

We stress that for equivalences of Theorem A and Theorem B to hold we choose specific line fields on the surfaces
constructed from our data $(r_0,\ldots,r_n;k_1,\ldots,k_{n-1})$ (resp., $(r_1,\ldots,r_n;k_1,\ldots,k_n)$).
The Fukaya categories depend on these choices. Namely, we will see that the Fukaya categories considered will be equivalent to the derived category of modules of a certain graded algebra given as the endomorphism algebra of generating set of objects (see Section \ref{Fuk-sec}). Changing the line field would result in changing the grading of this algebra which will not in general be derived equivalent to the original algebra even though the underlying ungraded algebras are the same. Therefore, if different data lead to homeomorphic (marked) surfaces
one cannot conclude in general that the corresponding Fukaya categories are equivalent. To determine which
of these categories are equivalent one should study the action of the mapping class group on the homotopy classes
of line fields. In a follow-up paper (\cite{LPol}), we described explicitly invariants of line fields under this action.
This leads to many interesting equivalences between the corresponding categories on the B-side,
generalizing the known equivalences in the balanced case $k_i=-1$ (see \cite{sibilla}).

Note that the B-model categories that previously appeared in homological mirror
symmetry for higher genus surfaces were given in terms of matrix factorizations categories of some $3$-dimensional
Landau-Ginzburg models (cf. \cite{AAEKO}, \cite{bocklandt}, \cite{HLee}, \cite{PS}). In our picture 
the B-model categories are the usual
derived categories associated with (commutative) stacky curves. This is more in line with the traditional
homological mirror symmetry conjecture \cite{kontsevich}. In the simplest cases the relation to the $3$-dimensional
Landau-Ginzburg models can be checked purely on the B-side via Kn\"orrer periodicity.

We prove Theorem B by identifying $\Perf(\bC)$ (resp., $D^b(\Coh \bC)$) with an explicit full subcategory 
(resp., localization) of $D^b(\cA_\bC-\mod)$, generalizing similar constructions by Burban-Drozd in
the non-stacky case (see \cite[Sec.\ 3,4]{BD}).
We also show that looking at other localizations of $D^b(\cA_\bC-\mod)$ one gets categorical resolutions of the categories 
$\Perf(\bC)$ (see Proposition \ref{nc-res-prop}).

The paper is organized as follows. In Section \ref{Auslander-sec} we discuss Auslander orders on balloon chains and balloon rings
and the categories of modules over them. The main result of this Section is Theorem \ref{B-side-thm} describing full
exceptional collections on these stacky curves 
(generalizing the exceptional collection in the non-stacky case constructed in \cite{BD}).
In Section \ref{Fuk-sec} we find similar exceptional collections in the partially wrapped Fukaya
categories of punctured surfaces of arbitrary genus
and prove Theorem A. In Section \ref{local} we consider objects in the partially wrapped Fukaya category supported near
marked points of the boundary and identify the corresponding modules over Auslander orders. Finally, in 
Section \ref{Perf-sec} we identify the subcategory in the partially wrapped Fukaya categories corresponding under the equivalence
of Theorem A to the subcategory $\Perf(\bC)$, thus, proving Theorem B.

Everywhere in this paper we work over a fixed ground field $\k$ (although our descriptions of Fukaya category are also valid
over $\Z$).

\medskip

\noindent
{\it Acknowledgments}. Y.L. is partially supported by the Royal Society (URF) and
the NSF grant DMS-1509141, and would like to thank Igor Burban for discussions at an early stage. A.P. is supported in part by the NSF grant DMS-1400390 and by the Russian Academic Excellence Project `5-100'. He would like to thank Paolo Stellari
for help with locating the reference \cite{miyachi}.

\section{Modules over Auslander orders}\label{Auslander-sec}

\subsection{Burban-Drozd tilting for nodal curves}

Let $C$ be a reduced projective curve, and let $\pi:\wt{C}\to C$ be its normalization.
The Auslander order $\cA_C$ over $C$ is the order given by 
\eqref{aus-def-eq}
where $\wt{\OO}=\pi_*\OO_{\wt{C}}$ and
$\II\sub\OO_C$ is the ideal sheaf of the singular locus. 
We denote by $\cA_C-\mod$ the category of coherent left $\cA_C$-modules.

Burban and Drozd have shown in \cite{BD} that if $C$ has only nodal or cuspidal singularities
then the category $\cA_C-\mod$ has global dimension $2$. 
Furthermore, if in addition all the components of $C$ are rational then they constructed a strong exceptional
collection generating $D^b(\cA_C-\mod)$.

Let us recall the form of this exceptional collection in the case when $C$ is either a chain or a ring of $\P^1$'s joined nodally
({\it standard $n$-gon}).
Let $\pi_i:\wt{C}_i\to C$, $i=1,\ldots,n$, be the restriction of the normalization map 
to the irreducible components of $\wt{C}$, and let $C_i=\pi_i(\wt{C}_i)$. 
For $i=1,\ldots,n$ and $j\in\Z$ we define an $\cA_C$-module 
$$\PP_i(j)=\left(\begin{matrix} \pi_{i*}\OO(j)\\ \pi_{i*}\OO(j) \end{matrix}\right).
$$ 
Also, for each node $q\in C$ we have a simple $\cA_{C}$-module
$$\SS_q=\left(\begin{matrix} 0 \\ \OO_q \end{matrix}\right).$$

It is proved in \cite[Sec.\ 5]{BD} that the objects
$$(\SS_q[-1] \ |\ q \text{ is a node of } C), (\PP_i(-1),\PP_i \ |\  i=1,\ldots,n)$$
form a full strong exceptional collection and its endomorphism algebra has a simple presentation.
Namely, in the case when $C$ is a chain, and the nodes are $q_1,\ldots,q_{n-1}$, with $q_i\in C_i\cap C_{i+1}$,
the morphism spaces 
$$\Hom(\PP_i(-1),\PP_i)=\k\cdot x_i\oplus \k\cdot y_i,$$
$$\Hom(\SS_{q_i}[-1],\PP_i(-1))=\k\cdot a_i, \ \Hom(\SS_{q_i}[-1],\PP_{i+1}(-1))=\k\cdot b_i$$
generate the endomorphism algebra, with the defining relations 
$$y_ia_i=0, \ x_{i+1}b_i=0.$$
In the case when $C$ is a ring and the nodes are $q_i\in C_i\cap C_{i+1}$, where $i\in\Z/n$, the description is the same
for $n\ge 2$, with the convention that $i\in\Z/n$. In the case $n=1$, the only difference is that $a_1$ and $b_1$ are
elements of the same space $\Hom(\SS_{q_1}[-1],\PP_1(-1))$, which is now $2$-dimensional.

\subsection{Auslander orders on stacky curves}\label{Aus-st-sec}

Let $\bC$ be either a balloon chain or a balloon ring with the components $\bC_1,\ldots,\bC_n$,
glued along the stacky nodes $q_i\in \bC_i\cap \bC_{i+1}$. Note that in the case
of a balloon ring we view the index $i$ as an element of $\Z/n$, whereas in the case of a balloon chain the nodes
are $q_1,\ldots,q_{n-1}$.

We have a natural morphism $\pi:\wt{\bC}\to \bC$ from the disjoint union of the stacky projective lines and we
set $\wt{\OO}=\pi_*\OO_{\wt{\bC}}$. We denote by $\II\sub\OO_{\bC}$ the ideal sheaf of the union of the nodes.
Then the Auslander order $\cA_{\bC}$ over $\bC$ is defined by the same formula \eqref{aus-def-eq}.
Let us define an $\cA_{\bC}$-module by 
$$\FF_\bC:=\left(\begin{matrix} \OO\\ \II \end{matrix}\right).$$
This module will play an important role later in connecting the category $D^b(\cA_\bC-\mod)$ with $\Perf(\bC)$ and
$D^b(\Coh \bC)$ (see Propositions \ref{DbCoh-prop} and \ref{Perf-char-B-prop}).

Balloons $B(a,b)$ (where $a,b>0)$ are examples weighted projective lines of \cite{GL}. The derived category of coherent sheaves on a balloon $B(a,b)$ (where $a,b>0$) is generated by the exceptional collection of line bundles (see \cite[Sec. 4]{GL}):

\begin{equation}
\begin{tikzpicture}[baseline=-0.5ex]
\label{balloon-exc-coll}
\matrix (m) [matrix of math nodes,row sep=3em,column sep=4em,minimum width=2em]
  {
  \OO(-aq_-)& \OO(-(a-1)q_-)&\cdots&\OO(-q_-)&\OO\\
  \OO(-bq_+)& \OO(-(b-1)q_+)&\cdots&\OO(-q_+)&\OO\\ };
      
 \path[-stealth]
    (m-1-1) edge node [left] {\tiny $=$} (m-2-1)
            edge node [above] {\tiny $x(-a+1)$} (m-1-2)
    (m-1-2) edge node [above] {\tiny $x(-a+2)$} (m-1-3)
    (m-1-3) edge node [above] {\tiny $x(-1)$} (m-1-4)
    (m-1-4) edge node [above] {\tiny $x(0)$} (m-1-5)

    (m-1-5) edge node [left] {\tiny $=$} (m-2-5)

    (m-2-1) edge node [above] {\tiny $y(-b+1)$} (m-2-2)
    (m-2-2) edge node [above] {\tiny $y(-b+2)$} (m-2-3)
    (m-2-3) edge node [above] {\tiny $y(-1)$} (m-2-4)
    (m-2-4) edge node [above] {\tiny $y(0)$} (m-2-5);

\end{tikzpicture}
\end{equation}

Here the endomorphism algebra of this collection is simply the path algebra of the corresponding quiver.

In the rest of this section we assume that all $r_i$ are positive (i.e., we do not allow a balloon chain with $r_0=0$ or $r_n=0$). 
Let $\wt{\bC}_i$ be the components of $\wt{\bC}$ and let $q_{i,-},q_{i,+}\in \wt{\bC}_i$ be the stacky points, 
with $|\Aut(q_{i,\pm})|=r_{i,\pm}$, so that $q_{i,+}$ and $q_{i+1,-}$
get glued into the node $q_i$ in $\bC$ (so $r_{i+1,-}=r_{i,+}$), and let $\pi_i:\wt{\bC}_i\to\bC$ be the natural projection.
Then we have 
$$\II=\bigoplus_i \pi_{i*}\OO(-q_{i,-}-q_{i,+}).$$

For integers $j$ and $m$ we set 
$$\PP_i(j,m)=\left(\begin{matrix} \pi_{i*}\OO(jq_{i,-}+mq_{i,+})\\ \pi_{i*}\OO(jq_{i,-}+mq_{i,+}) \end{matrix}\right),$$
Note that $\PP_i(j+r_{i,-},m)\simeq \PP_i(j,m+r_{i,+})$. We have a decomposition of left $\cA_\bC$-modules
\begin{equation}\label{A-decomp-eq}
\cA_\bC\simeq \FF_\bC\oplus\bigoplus_{i=1}^n \PP_i(0,0).
\end{equation}

We claim that $\PP_i(j,m)$ are exceptional objects in $D^b(\cA_\bC-\mod)$ and that by restricting $(j,m)$ appropriately
we get exceptional collections with the same endomorphism algebra 
as the corresponding exceptional collection on the balloon $\bC_i$.

\begin{lem}\label{P-exc-lem} 
(i) The $\cA_{\bC}$-modules $\PP_i(j,m)$ are exceptional.

\noindent
(ii) For $i\neq i'$ one has $\Hom^*(\PP_i(j,m),\PP_{i'}(j',m'))=0$.

\noindent
(iii) For any $(j,m)$ we get an exceptional collection 

\begin{equation}
\begin{tikzpicture}[baseline=-0.5ex]
\label{balloon-exc-coll-bis}
\matrix (m) [matrix of math nodes,row sep=3em,column sep=2.8em,minimum width=2em]
   {\PP_i(j,m)&\PP_i(j+1,m)&\cdots&\PP_i(j+r_{i,-}-1,m)&\PP_i(j+r_{i,-},m)\\
    \PP_i(j,m)&\PP_i(j,m+1)&\cdots&\PP_i(j,m+r_{i,+}-1)&\PP_i(j,m+r_{i,+})\\};
      
 \path[-stealth]
    (m-1-1) edge node [left] {\tiny $=$} (m-2-1)
            edge node [above] {\tiny $x(0)$} (m-1-2)
    (m-1-2) edge node [above] {\tiny $x(1)$} (m-1-3)
    (m-1-3) edge node [above] {} (m-1-4)
    (m-1-4) edge node [above] {\tiny $x(r_{i,-}-1)$} (m-1-5)

    (m-1-5) edge node [left] {\tiny $=$} (m-2-5)

    (m-2-1) edge node [above] {\tiny $y(0)$} (m-2-2)
    (m-2-2) edge node [above] {\tiny $y(1)$} (m-2-3)
    (m-2-3) edge node [above] {} (m-2-4)
    (m-2-4) edge node [above] {\tiny $y(r_{i,+}-1)$} (m-2-5);

\end{tikzpicture}
\end{equation}
with the same endomorphism algebra as for the exceptional collection \eqref{balloon-exc-coll}.
We denote this exceptional collection as $\Exc_i(j,m)$.
\end{lem}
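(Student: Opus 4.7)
The strategy is to reduce all Hom and Ext computations between the $\PP_i(j,m)$'s to the corresponding computations on the disjoint union of smooth balloon components $\wt{\bC}_i$. The main input is the decomposition \eqref{A-decomp-eq}, which exhibits each $\PP_i(0,0)$ as a direct summand of $\cA_\bC$ and hence as a projective $\cA_\bC$-module.

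For the Hom-level assertions in (ii) and (iii), I would first unpack what an $\cA_\bC$-module morphism $\PP_i(j,m) \to \PP_{i'}(j',m')$ amounts to: a pair of maps between the corresponding top and bottom components that are compatible with the off-diagonal entries of $\cA_\bC$ (the lower-left $\wt\OO$ and the upper-right $\II$). One checks that this compatibility forces both maps to coincide with a single $\wt\OO$-linear map between the top components. When $i \neq i'$ these top components $\pi_{i*}\OO(jq_{i,-}+mq_{i,+})$ and $\pi_{i'*}\OO(j'q_{i',-}+m'q_{i',+})$ have disjoint supports on $\wt{\bC}$, so the map must vanish, giving (ii) in degree zero. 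When $i = i'$, the Hom-space is identified with $\Hom_{\wt{\bC}_i}(\OO(jq_{i,-}+mq_{i,+}), \OO(j'q_{i,-}+m'q_{i,+}))$, and under this identification the generators $x(l), y(l)$ of \eqref{balloon-exc-coll-bis} correspond to multiplication by local uniformizers at $q_{i,\pm}$, yielding the same endomorphism algebra as in \eqref{balloon-exc-coll}.

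For the vanishing of the higher Ext groups required in (i), (ii), and (iii), I would use short exact sequences of $\cA_\bC$-modules
\[ 0 \to \PP_i(j,m) \xrightarrow{x(j)} \PP_i(j+1,m) \to \TT_{i,-}(j,m) \to 0 \]
(and the analogue in the $y$-direction), where the cokernel $\TT_{i,\pm}(j,m)$ is a torsion $\cA_\bC$-module supported at the stacky point $q_{i,\pm}$. The induced long exact sequence in $\Ext^*(-,M)$, together with the vanishing $\Ext^{>0}(\PP_i(0,0),M)=0$ coming from projectivity, reduces the problem inductively in $j$ and $m$ to controlling $\Ext^{>0}(\TT_{i,\pm}, M)$. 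These last groups are then handled by giving each $\TT_{i,\pm}$ an explicit two-term resolution by $\PP$-type modules and invoking the Hom analysis of the previous step.

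The principal technical obstacle is the description of the $\cA_\bC$-module structure of the cokernels $\TT_{i,\pm}(j,m)$ at a stacky node $q_i$ (as opposed to at a free stacky end-point), since near such a node $\cA_\bC$ is the Auslander algebra of a stacky nodal singularity with gluing datum $k_i\in(\Z/r_i\Z)^*$, and the simple modules together with their two-term projective resolutions must be tracked with care. Once these local resolutions are in place, matching the structure constants with those of the exceptional collection \eqref{balloon-exc-coll} on the balloon is a straightforward bookkeeping exercise.
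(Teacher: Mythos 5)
Your degree-zero analysis is essentially the paper's: a map of $\cA_\bC$-modules between two $\PP$'s is forced to be a single $\wt{\OO}$-linear map of the top entries, so all $\Hom$'s reduce to maps of line bundles on the normalization (the paper gets the same reduction by computing $\und{\Hom}(\PP_i(j,m),\PP_{i'}(j',m'))$ locally at the nodes, via the $\mu_r$-quotient presentation and \cite[Cor.~5.5]{BD}, obtaining $0$ for $i\neq i'$ and $\pi_{i*}L$ for $i=i'$).

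The higher-$\Ext$ part of your plan has a genuine gap. The decomposition \eqref{A-decomp-eq} only exhibits $\PP_i(0,0)$ as a direct summand of $\cA_\bC$ as a sheaf of modules, i.e.\ it is \emph{locally} projective; it is not Ext-projective in the category of coherent $\cA_\bC$-modules, and $\Ext^{>0}(\PP_i(0,0),M)$ does not vanish for general $M$. What local projectivity actually gives is $\und{\Ext}^{>0}(\PP_i(j,m),M)=0$, hence $\Ext^k(\PP_i(j,m),M)\simeq H^k\bigl(\bC,\und{\Hom}(\PP_i(j,m),M)\bigr)$; and already $\Ext^1(\PP_i(0,0),\PP_i(-1,-1))\simeq H^1\bigl(\wt{\bC}_i,\OO(-q_{i,-}-q_{i,+})\bigr)\neq 0$ (its coarse pushforward is $\OO_{\P^1}(-2)$). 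So the base case of your induction is false, and the induction is moreover circular: the ``two-term resolution of $\TT_{i,\pm}$ by $\PP$-type modules'' is exactly the short exact sequence you started from, so it gives no independent control of $\Ext^{>0}(\TT_{i,\pm},M)$. The missing input, which cannot be extracted from projectivity alone, is the cohomology computation on the balloon: one must know $H^*(\wt{\bC}_i,L)$ for the specific line bundles $L$ arising as $\und{\Hom}$ between members of the collection, equivalently that \eqref{balloon-exc-coll} is a (strong, full) exceptional collection on $B(r_{i,-},r_{i,+})$ as in \cite{GL}. That two-step reduction --- local computation of $\und{\Hom}$ and $\und{\Ext}$, then cohomology of a twisted line bundle on the balloon via \eqref{balloon-exc-coll} --- is precisely the paper's proof; if you replace your appeal to global projectivity by this, your argument closes, and the inductive scaffolding with torsion quotients becomes unnecessary.
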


\Pf . First, let us calculate $\und{\Hom}(\PP_i(j,m),\PP_{i'}(j',m'))$. Note that this is a local
calculation, so near the nodes we can use the presentation as a quotient of $xy=0$ by $\mu_r$.
Thus, using the similar calculation in the non-stacky case (see \cite[Cor.\ 5.5]{BD}), we derive
that the above $\und{\Hom}$ vanishes for $i\neq i'$, while for $i=i'$ we get
$$\und{\Hom}(\PP_i(j,m),\PP_i(j',m'))\simeq \pi_{i*}L$$
for appropriate line bundle $L$ on $\bC_i$.
Thus, we are reduced to a calculation of cohomology on the balloon $\bC_i$, i.e., to
the standard exceptional collection \eqref{balloon-exc-coll} on the balloon curve twisted by a line bundle.
\ed

As in the non-stacky case, for each node $q\in\bC$ we have a simple $\cA_{\bC}$-module
$$\SS_q=\left(\begin{matrix} 0 \\ \OO_q \end{matrix}\right).$$

Recall that we assume that locally near each node we can identify $\bC$ with the quotient of $xy=0$ by the action of 
$\mu_r$ of the form $\zeta\cdot (x,y)=(\zeta^kx,\zeta y)$ for some $k\in(\Z/r)^*$.
Using this identification, 
locally we can view $\cA_{\bC}$-modules as $\mu_r$-equivariant modules over the Auslander order on $xy=0$.
Thus, if we fix an identification $\Aut(q)=\mu_r$ then 
for every character $x\mapsto x^c$ of $\mu_r$ we have a twist operation $M\mapsto M\{c\}$ on
$\cA_{\bC}$-modules supported at the node $q$. 
For our stacky curves we fix identifications $\Aut(q_i)=\mu_{r_i}$, where 
$$r_i=r_{i,+}=r_{i+1,-},$$ 
in such a way that
$\mu_{r_i}$ acts on the fiber of $\OO(-q_{i+1,-})$ at $q_{i+1,-}$ through its natural character
(i.e., $\zeta\in\mu_{r_i}$ acts by mutliplication with $\zeta$). Then there exists a unique $k_i\in(\Z/r_i)^*$ such that
$\mu_{r_i}$ acts on the fiber of $\OO(-q_{i,+})$ at $q_{i,+}$ through the character $\zeta\mapsto \zeta^{k_i}$.
We include the parameters $(k_i)$ in the notation by writing
$\CC=C(r_0,r_1,\ldots,r_n;k_1,\ldots,k_{n-1})$ (in the case of a balloon chain), or
$\CC=R(r_1,\ldots,r_n;k_1,\ldots,k_n)$ (in the case of a balloon ring).
In the case of non-stacky nodes, i.e., when $r_i=1$, we will either write $k_i=0$ or omit $k_i$ altogether.

Let 
$$p:\bC\to C$$
be the coarse moduli for $\bC$. Note that $C$ is either a chain or a ring of projective lines.

Let us say that a quasicoherent sheaf $\EE$ on $\bC$ is a generator of $\Qcoh(\bC)$ 
with respect to $p$ if for every quasicoherent sheaf $\GG$ on $\bC$ the natural map
$$p^*p_*\und{\Hom}(\EE,\GG)\ot \EE\to \GG$$
is surjective (see \cite[Sec.\ 5]{OS}). 

Assume $\bC$ is a balloon chain.
For each collection of integers $\ba=(a_{i,\pm})$, where $0\le a_{i,\pm}<r_{i,\pm}$ and $a_{i,+}=a_{i+1,-}$, we define
a line bundle $\MM\{\ba\}$ on $\bC$
by gluing the line bundles $\OO_{\wt{\bC}_i}(k_{i-1}a_{i,-}q_{i,-}+a_{i,+}q_{i,+})$ on $\wt{\bC}_i$
(note that this gluing is well defined because the automorphism
group of the node acts on the fibers with the same character).

In the case when $\bC$ is a balloon ring the definition of line bundles $\MM\{\ba\}$ is similar (we need as many numbers in the collection
$\ba$ as there are stacky points in $\bC$). Note that in the case $n=1$ this means that we are descending the line bundle $\OO_{\wt{\bC}}(-aq_- + aq_+)$
to $\bC$.

\begin{lem}\label{generation-lem}
The vector bundle $\bigoplus_{\ba} \MM\{\ba\}$ is a generator of $\Qcoh(\bC)$ with respect to $p$.
\end{lem}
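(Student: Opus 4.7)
The plan is to reduce the statement to a pointwise condition on the fiber of $\EE:=\bigoplus_{\ba}\MM\{\ba\}$ at each stacky point, and then verify that condition by a direct character computation.

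The first step is to establish the following local criterion, which is the standard generation criterion for tame DM stacks over their coarse moduli (cf.\ \cite{OS}): a coherent sheaf $\EE$ on $\bC$ is a generator of $\Qcoh(\bC)$ with respect to $p$ if and only if, for every geometric point $q\to\bC$, the fiber $\EE|_q$, viewed as a representation of $\Aut(q)$, contains every irreducible character of $\Aut(q)$. The "only if" direction is immediate: since $p^*V$ carries the trivial $\Aut(q)$-action on the fiber at $q$, the fiber of $p^*V\ot\EE$ at $q$ is a direct sum of copies of $\EE|_q$, so any $\Aut(q)$-character occurring in $\GG|_q$ must already occur in $\EE|_q$. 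The "if" direction follows from a Nakayama-type argument: one first verifies surjectivity onto the irreducible skyscraper sheaves at each stacky point using the assumption, and then propagates it to arbitrary $\GG$ by standard means. At a non-stacky point of $\bC$ each $\MM\{\ba\}$ is locally free of rank $1$, so the condition is trivial there, and the content lies entirely at the stacky points.

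The second step is the character calculation at each stacky point. At a node $q_i$ with $\Aut(q_i)=\mu_{r_i}$, using the conventions fixed earlier in the paper ($\OO(-q_{i+1,-})$ has the natural $\mu_{r_i}$-character, while $\OO(-q_{i,+})$ has character $\zeta\mapsto\zeta^{k_i}$), the fiber of $\MM\{\ba\}|_{\wt{\bC}_i}=\OO(k_{i-1}a_{i,-}q_{i,-}+a_{i,+}q_{i,+})$ at $q_{i,+}$ is the 1-dimensional $\mu_{r_i}$-representation with character $\zeta\mapsto\zeta^{-k_ia_{i,+}}$; the analogous computation from the $\wt{\bC}_{i+1}$ side gives $\zeta\mapsto\zeta^{-k_ia_{i+1,-}}$, and these agree thanks to the constraint $a_{i,+}=a_{i+1,-}$ (which is precisely why the gluing defining $\MM\{\ba\}$ exists). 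Since $k_i\in(\Z/r_i)^*$, as the shared value $a_{i,+}=a_{i+1,-}$ ranges over $\Z/r_i$ the resulting character runs through every element of $\widehat{\mu_{r_i}}$. In the balloon chain case, an analogous (simpler) computation at the outer stacky tips $q_{1,-}$ and $q_{n,+}$ — where no $k$-twist enters the formula defining $\MM\{\ba\}$ — yields the character $\zeta\mapsto\zeta^{-a_{1,-}}$ (resp.\ $\zeta\mapsto\zeta^{-a_{n,+}}$), which again covers every character as $a_{1,-}\in\Z/r_0$ (resp.\ $a_{n,+}\in\Z/r_n$) varies.

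Combining the two steps, each irreducible representation of $\Aut(q)$ appears as a direct summand in the fiber of $\EE=\bigoplus_{\ba}\MM\{\ba\}$ at every stacky point $q$, and the local criterion concludes the proof. I expect the main obstacle to be stating and justifying the local criterion cleanly (in particular, pinning down the correct "if" direction for tame DM stacks with coarse moduli of the type considered); the character bookkeeping itself is routine once the sign and twist conventions are tracked carefully, but some care is needed to spell out the boundary convention at the non-glued outer tips of a balloon chain before invoking it.
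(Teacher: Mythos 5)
Your proposal is correct and follows essentially the same route as the paper: the paper's proof also localizes over the coarse curve near the stacky points, presents the stack there as a cyclic quotient, and invokes the Olsson--Starr criterion \cite[Prop.\ 5.2]{OS}, which is exactly the fiberwise ``contains every character of $\Aut(q)$'' condition you state and verify. The only difference is one of explicitness: you spell out the character bookkeeping (using $k_i\in(\Z/r_i)^*$ and the constraint $a_{i,+}=a_{i+1,-}$) that the paper leaves implicit, and you sketch rather than cite the ``if'' direction of the local criterion.
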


\Pf . The question is local over $C$, so it is enough to check that this is true near the stacky points. Then
we can use the presentation as a quotient by the action of the cyclic group and
\cite[Prop.\ 5.2]{OS}. 
\ed

\begin{thm}\label{B-side-thm}
Consider the stacky curve $\CC=C(r_0,r_1,\ldots,r_n;k_1,\ldots,k_{n-1})$ or $\CC=R(r_1,\ldots,r_n;k_1,\ldots,k_n)$,
where all $r_i>0$. 
For each $i=1,\ldots,n$, let us fix a pair of integers $(j_i,m_i)$.
In the case when $\bC$ is a balloon ring we view indices $i$ as elements of $\Z/n\Z$.
Then the category $D^b(\cA_{\bC}-\mod)$ is generated as a triangulated category
by the strong exceptional collection consisting of the two types of objects:
\begin{itemize}
\item
$(\SS_q\{c\}[-1])$ where $q$ is a node with  $\Aut(q)=\mu_r$, $c\in\Z/r\Z$;
\item
for each $i=1,\ldots,n$, the objects of the exceptional collection $\Exc_i(j_i,m_i)$ (see \eqref{balloon-exc-coll-bis}).
\end{itemize}
The endomorphism algebra of this exceptional collection is generated by the 
morphisms $x_i(j)$, $y_i(m)$ within each subcollection $\Exc_i(j_i,m_i)$ (which are the same as in \eqref{balloon-exc-coll-bis}),
as well as the $1$-dimensional spaces
\begin{align*}
&\Hom(\SS_{q_i}\{-j_{i+1}-j-1\}[-1], \PP_{i+1}(j_{i+1}+j,m_{i+1}))=\k\cdot b_i(j) \ \text{ and } \\
&\Hom(\SS_{q_i}\{-k_i(m_i+m+1)\}[-1], \PP_{i}(j_i,m_i+m))=\k\cdot a_i(m).
\end{align*}
The defining relations are $ya=0$ and $xb=0$ whenever the composition is possible.
\end{thm}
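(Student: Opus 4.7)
The plan is to reduce everything to local computations near the stacky nodes, where the Auslander order is obtained from the Burban–Drozd Auslander order of the ordinary node $xy=0$ by passing to $\mu_{r_i}$-equivariant modules, and then to bootstrap the non-stacky result of \cite[Sec.~5]{BD} to the equivariant setting. First, I would dispose of the morphism spaces among the $\PP_i(j,m)$: Lemma \ref{P-exc-lem} already shows that different $\Exc_i(j_i,m_i)$ pieces are $\Hom^*$-orthogonal, and that within each piece we recover exactly the morphism algebra of the standard exceptional collection \eqref{balloon-exc-coll} on the balloon $B(r_{i,-},r_{i,+})$ from \cite{GL}. So for the $\PP$-part of the collection, only the strongness (vanishing of higher $\Ext$) still needs to be recorded, which follows from the reduction to cohomology of line bundles on the balloons.

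Next, I would handle the morphisms involving the simples $\SS_q\{c\}[-1]$. The essential content is a local computation at a node $q_i$. Using the presentation $[\{xy=0\}/\mu_{r_i}]$ with $\zeta\cdot(x,y)=(\zeta^{k_i}x,\zeta y)$, a short projective resolution of $\SS_{q_i}$ as a $\mu_{r_i}$-equivariant $\cA$-module (analogous to the non-stacky one in \cite[Sec.~5]{BD}) lets me compute all $\Ext^*(\SS_q\{c\}[-1],-)$. Three checks are needed: (a) $\Ext^*$ between different $\SS_q\{c\}[-1]$ vanishes in degrees $\le 0$; (b) $\Hom^*(\PP_i(j,m),\SS_q\{c\}[-1])=0$, which is the upper-triangularity; (c) $\Hom^*(\SS_{q_i}\{c\}[-1],\PP_i(j,m))$ is nonzero only in degree $0$ and is one-dimensional for a single value of $c$, and similarly for $\PP_{i+1}$. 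The character bookkeeping in (c) is where the indices in the statement come from: since the fiber of $\OO(-q_{i+1,-})$ at the node transforms by $\zeta\mapsto\zeta$ and the fiber of $\OO(-q_{i,+})$ transforms by $\zeta\mapsto\zeta^{k_i}$, a morphism $\SS_{q_i}\{c\}[-1]\to\PP_{i+1}(j_{i+1}+j,m_{i+1})$ exists exactly when $c=-j_{i+1}-j-1$, while $\SS_{q_i}\{c\}[-1]\to\PP_i(j_i,m_i+m)$ exists exactly when $c=-k_i(m_i+m+1)$, giving the morphisms $b_i(j)$ and $a_i(m)$ respectively.

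The relations $y_{i+1}b_i(j)=0$ and $x_i(m)a_i(m)=0$ are then immediate from the local picture: the morphism $a_i(m)$ factors through the fiber at $q_{i,+}$, and multiplication by $y$ on the $\pi_{i*}\OO$-component vanishes on this fiber, and symmetrically for $b_i(j)$. To see that these relations are all of them, one compares dimensions of composition spaces with those of the analogous algebra in the non-stacky case of \cite[Sec.~5]{BD}, where the presentation is already known to be complete; the equivariant decomposition into characters just spreads the non-stacky algebra across copies labelled by $c\in\Z/r_i\Z$, without introducing new relations.

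Finally, for generation, I would argue as follows. The Auslander order on $\bC$ has global dimension $2$ (this is a local property at the nodes, and local-to-$\mu_r$-equivariant descent reduces it to the non-stacky statement of \cite{BD}), so $\Perf(\cA_\bC-\mod)=D^b(\cA_\bC-\mod)$, and it suffices to hit every object of $\cA_\bC-\mod$ in the triangulated closure of our collection. The decomposition \eqref{A-decomp-eq} shows $\cA_\bC\simeq \FF_\bC\oplus\bigoplus_i\PP_i(0,0)$; twisting by characters of the $\mu_{r_i}$ at each node and using Lemma \ref{generation-lem} to produce enough line bundles of the form $\MM\{\ba\}$, one builds the remaining summand $\FF_\bC$ (and its twists) from successive extensions by the simples $\SS_q\{c\}$ and the $\PP_i(j,m)$. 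The main obstacle in the whole argument is this last bookkeeping: verifying that the particular shifts $(j_i,m_i)$ together with all twists $c\in\Z/r_i\Z$ of the simples really generate, and checking that the character labels appearing in $a_i(m)$ and $b_i(j)$ are consistent globally around a balloon ring (where the indices $i\in\Z/n\Z$ must close up).
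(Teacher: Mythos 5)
Your computation of the morphism spaces and relations is essentially the paper's argument: exceptionality and mutual orthogonality of the $\PP_i(j,m)$ are exactly Lemma \ref{P-exc-lem}, and the morphisms involving the simples are computed locally by regarding $\cA_\bC$-modules near a node as $\mu_{r_i}$-equivariant modules over the (completed) Auslander order of $xy=0$, with the character bookkeeping done by tensoring with the line bundles $\MM\{\ba\}$ (for which $\SS_{q_i}\ot\MM\{\ba\}\simeq\SS_{q_i}\{-k_ia\}$); this yields precisely the labels of $a_i(m)$ and $b_i(j)$ in the statement, and the relations follow from the non-stacky local calculation of \cite[Sec.\ 5]{BD} together with the fact that $x_i(j)$ (resp.\ $y_i(m)$) is an isomorphism near $q_{i,+}$ (resp.\ $q_{i,-}$). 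So far your route and the paper's coincide.

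The genuine gap is in fullness. You correctly reduce to showing that every coherent $\cA_\bC$-module lies in the triangulated subcategory $\DD$ generated by the collection, but what your sketch actually places in $\DD$ is only $\cA_\bC$ and its twists by line bundles (via building $\FF_\bC$ by extensions), and this alone does not imply generation; moreover Lemma \ref{generation-lem} is not a device for ``producing line bundles'' --- it is the statement that $\bigoplus_{\ba}\MM\{\ba\}$ generates $\Qcoh(\bC)$ relative to the coarse moduli map $p$, and that is the key input you never actually use. The missing closing steps, as in the paper, are: first, $\DD$ is closed under tensoring by arbitrary line bundles on $\bC$, which rests on \eqref{generation-aux-eq} (fullness of the balloon collections), since $\PP_i(j,m)\ot\LL$ is again of the form $\left(\begin{smallmatrix}\pi_{i*}\GG\\ \pi_{i*}\GG\end{smallmatrix}\right)$ and $\SS_q\{c\}\ot\LL\simeq\SS_q\{c'\}$; the same fact gives $\left(\begin{smallmatrix}\II\\ \II\end{smallmatrix}\right)\in\DD$, which is what produces $\FF_\bC$ from the exact sequence with $\bigoplus_q\SS_q$. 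Second, one needs an ample line bundle $L$ pulled back from the coarse curve $C$: then $\cA_\bC\ot\MM\{\ba\}\ot p^*L^m\in\DD$ for all $\ba$ and $m\in\Z$, and by Lemma \ref{generation-lem} plus ampleness these objects have vanishing right orthogonal in $\Qcoh(\cA_\bC)$; since $\DD$ is admissible (being generated by an exceptional collection), this forces $\DD=D^b(\cA_\bC-\mod)$. Your appeal to global dimension $2$ (so $\Perf=D^b$) is true but does not substitute for this orthogonality argument, and without it the step from ``$\cA_\bC$ and its twists lie in $\DD$'' to ``every module lies in $\DD$'' is unjustified.
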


\Pf . We already know that the $\cA_{\bC}$-modules $\PP_i(j,m)$ are exceptional and have calculated the relevant morphisms
between them (see Lemma \ref{P-exc-lem}). Computation of morphisms involving $\SS_q\{c\}$ can be done locally near
the node $q$. Note that near $q$ we have a presentation of our stacky curve as $U/\mu_r$, where $U$ is a neighborhood of
the node in the plane curve $xy=0$. Thus, we are reduced to the computation of $\Ext$-groups in the category of
$\mu_r$-equivariant $\cA_U$-modules. From the non-stacky case considered in \cite[Sec.\ 5]{BD} we know that the
only relevant nontrivial $\Ext$-class in the category of $\cA_U$-modules is the class of the extension
\begin{equation}\label{Sq-ex-seq-U}
0\to \left(\begin{matrix} I \\ I \end{matrix}\right)\to \left(\begin{matrix} I \\ \OO_U \end{matrix}\right)\to \SS_q\to 0
\end{equation}
where $I\sub\OO_U$ is the ideal sheaf of $q$. Furthermore, this extension gives a $\mu_r$-equivariant class.
Thus, the only nontrivial morphisms involving $\SS_q$ for $q=q_i$ are one-dimensional spaces
$\Ext^1(\SS_{q_i}, \PP_i(j,m))$ for $m\equiv -1\mod r_i$ and 
$\Ext^1(\SS_{q_i}, \PP_{i+1}(j,m))$ for $j\equiv -1\mod r_i$.
Next, to find morphisms involving $\SS_{q_i}\{c\}$, we tensor the exact sequence \eqref{Sq-ex-seq-U}
by line bundles of the form $\MM\{\ba\}$ on $\bC$. Namely, it is easy to see that
$$\SS_{q_i}\ot\MM\{\ba\}\simeq \SS_{q_i}\{-k_ia\},$$
where $a=a_{i+1,-}=a_{i,+}$.
Thus, we get nontrivial elements in 
$\Ext^1(\SS_{q_i}\{-k_ia\}, \PP_{i+1}(j,m))$ for $j\equiv -1+k_ia\mod r_i$ and
in $\Ext^1(\SS_{q_i}\{-k_ia\}, \PP_i(j,m))$ for $m\equiv -1+a\mod r_i$.
This easily implies the asserted form
of the endomorphism algebra of our collection
(one has to use the fact that
the morphisms $x_i(j)$ (resp., $y_i(m)$) are isomorphisms near $q_{i,+}$ (resp., $q_{i,-}$)).

Let $\DD\sub D^b(\cA_{\bC}-\mod)$ be the triangulated subcategory generated by our exceptional collection.
The fact that the exceptional collection \eqref{balloon-exc-coll} on each balloon is full implies that for every
coherent sheaf $\GG$ on $\wt{\bC}_i$ we have
\begin{equation}\label{generation-aux-eq}
\left(\begin{matrix} \pi_{i*}\GG\\ \pi_{i*}\GG \end{matrix}\right)\in \DD.
\end{equation}
This immediately implies that $\DD$ is closed under tensoring operation
$M\mapsto M\ot \LL$ on $\cA_{\bC}$-modules, where $\LL$ is any line bundle on $\bC$.
Indeed, the objects $\PP_i(j,m)\ot\LL$ have the form as in \eqref{generation-aux-eq}, whereas
$\SS_q\{c\}\ot\LL$ is isomorphic to $\SS_q\{c'\}$ for some $c'$.

Also, \eqref{generation-aux-eq} implies that $\left(\begin{matrix} \II \\ \II \end{matrix}\right)\in \DD$. Now the exact 
sequence 
$$0\to \left(\begin{matrix} \II \\ \II \end{matrix}\right)\to \left(\begin{matrix} \II \\ \OO_{\bC} \end{matrix}\right)\to
\bigoplus_q \SS_q\to 0$$
shows that $\FF_\bC=\left(\begin{matrix} \II \\ \OO_{\bC} \end{matrix}\right)\in\DD$. Hence, 
by \eqref{A-decomp-eq}, we derive that $\cA_{\bC}\in\DD$.

Now let $L$ be an ample line bundle on $C$. Then Lemma \ref{generation-lem} implies that
the line bundles $\MM\{\ba\}\ot p^*L^m$, where $m\in\Z$, are generators for $\Qcoh(\bC)$ 
(in the sense that the orthogonal is zero; cf. the proof of \cite[Thm.\ 5.10]{BD}). It follows that
the $\cA_{\bC}$-modules $\cA_{\bC}\ot \MM\{\ba\}\ot p^*L^m$ are generators for $\Qcoh(\cA_{\bC})$.
Since all these objects are in $\DD$, this finishes the proof that our exceptional collection is full.
\ed

\section{Explicit computations of partially wrapped Fukaya categories}\label{Fuk-sec}

We will next describe several partially wrapped Fukaya categories explicitly by exhibiting
generating sets of objects and the endomorphism algebras of these objects. The combinatorial
description provided in \cite{HKK} implies that if $X$ is a surface with non-empty boundary and
$\Lambda$ is a choice of marked points at its boundary, then a set of pairwise disjoint and
non-isotopic Lagrangians $\{ L_i \}$ in $X \backslash \Lambda$ generates the
partially wrapped Fukaya category $\mathcal{W}(X;\Lambda)$ as a triangulated category
if the complement of the Lagrangians $X \setminus \{ \bigsqcup_i L_i \}$ is a union of disks each of which
has exactly one marked point on its boundary. Furthermore, in this case, the algebra \[ \bigoplus_{i,j} \mathrm{hom}(L_i, L_j) \] is formal, and it 
can be described by a graded quiver with quadratic monomial relations. The generators of
this quiver can easily be described following the flow lines corresponding to rotation around the boundary components of
$X$ connecting the Lagrangians. Note that each boundary component of $X$ is an oriented circle (where the orientation is induced by the area form on $X$).
The data of $\Lambda$ enters by disallowing flows that pass through a marked point. 
The algebra structure is given by concatenation of flow lines. Finally, we need to prescribe a
choice of a grading structure. A general definition of assigning gradings is explained in
detail in \cite[Sec. 2.1]{HKK}. The extra structure needed to define a grading is a section of the projectivized tangent bundle of $X$, which we view as an unoriented line field $l \subset TX$. Such line fields form a torsor for $C^\infty(X, \mathbb{R}P^1)$ and the connected components can be identified with $H^1(X;\mathbb{Z})$. In practice, for example when one uses a generating set of objects $\{ L_i\}$ as above,  one could apply the recipe from \cite[Sec.\ 3.2]{HKK}: if a set of generators
$x_1,\ldots,x_n$ bound a disk then one must have $\sum |x_i| = n-2$, and if a surface is glued
together from disks, choosing gradings compatible with these constraints for each disk defines a
global grading structure. Since our surfaces are glued together from
disks which have at least one marked point along the boundary, the above constraint never arises
when one looks at morphisms between $\{L_i \}$ only, so
we deduce that the gradings for (primitive) arrows on the associated quiver can be assigned
arbitrarily. We will choose a grading so that all of the arrows in the quiver have degree 0. This choice corresponds to the line field which is homotopic to the constant line field on the page everywhere in the pictures below.

Finally, we note that given $(X,\Lambda, l)$ and $(X', \Lambda', l')$, any homeomorphism $f: X \to X'$ mapping $\Lambda$ to $\Lambda'$ bijectively and such that $f_* l$ is homotopic to $l'$, induces a triangulated equivalence of corresponding partially wrapped Fukaya categories \cite[Prop. 3.2]{HKK}.  

\subsection{Computation of $\mathcal{W}(0;m)$ and $\mathcal{W}(0;m_1,m_2)$} 
\label{diskan}

We begin with two simple cases, which are well known (\cite{HKK}, \cite{STZ}).

In Figure \ref{fig1} we have a genus 0 surface with 1 boundary component, in other words, a disk
$\mathbb{D}^2$, together with $m$ marked points on its boundary. Furthermore, we depicted $m$
objects $L_1, L_2,\ldots, L_{m-1}$ from $\mathcal{W}(0;m)$. These objects do not intersect at the
interior of $\mathbb{D}^2$, thus the only morphisms between them are given by flow lines along the
boundary of $\mathbb{D}^2$. However, the marked points on the boundary serve as stops, hence the
endomorphism algebra of the object $L=L_1 \oplus L_2 \oplus \ldots L_{m-1}$ is given by the
$A_{m-1}$ quiver as in Figure \ref{fig2} with relations $a_{i+1} a_i =0$ for $i=1,\ldots, m-2$. We
grade the Lagrangians so that all the morphisms have degree $|a_i|=0$ for $i=1,2,\ldots, m-2$.

\begin{figure}[htb!]
\centering
\begin{tikzpicture}
\tikzset{vertex/.style = {style=circle,draw, fill,  minimum size = 2pt,inner        sep=1pt}}
\def \radius {1.5cm}
\tikzset{->-/.style={decoration={ markings,
        mark=at position #1 with {\arrow{>}}},postaction={decorate}}}

    \foreach \s in {1,2,3,4} {
\node[vertex] at ({360/5 * (\s)}:\radius) {} ;

    \draw ({360/5 * (\s)}:\radius) arc ({360/5 *(\s)}:{360/5*(\s+1)}:\radius);
    \draw[blue] ({360/5 * (\s) - 360*3/40}:\radius) arc ({360/5
    *(\s)+360*3/40}:{360/5*(\s)-360*3/40}:-\radius);

}

    \draw[->-=.5] ({0}:\radius) arc ({360/5 *0}:{360/5}:\radius);
 \node[vertex] at ({360/5 * 5}:\radius) {} ;

    \node[xshift=13] at ({0}:\radius) {\tiny $m$} ;

   \node[yshift=7, xshift=2] at ({360/5 }:\radius)  {\tiny $1$} ;
    \node[xshift=-5,yshift=2] at ({360/5 * (2)}:\radius)  {\tiny $2$} ;
    \node[xshift=-7, yshift=-4] at ({360/5 * (3)}:\radius)  {$\cdot$} ;
    \node[yshift=-9] at ({360/5 * (4)}:\radius)  {\tiny $m-1$} ;

    \node at ({360/5}:\radius-0.6cm) {\tiny $L_{1}$};
    \node at ({360/5 *2}:\radius-0.6cm) {\tiny $L_2$};
    \node at ({360/5 *3}:\radius-0.6cm) {{\tiny $L$}$_\cdot$ };
    \node at ({360/5 *4}:\radius-0.6cm) {\tiny $L_{m-1}$};

\end{tikzpicture}
    \caption{Objects in the partially wrapped category of $\mathbb{D}^2$
    with $m$ marked points. }
    \label{fig1}
\end{figure}

\begin{figure}[htb!]
\centering

\begin{tikzpicture}
    \tikzset{vertex/.style = {style=circle,draw, fill,  minimum size = 2pt,inner    sep=1pt}}
\tikzset{edge/.style = {->,-stealth',shorten >=8pt, shorten <=8pt  }}

\node[vertex] (a) at  (0,0) {};
\node[vertex] (c) at  (8,0) {};
\node[vertex] (x) at  (6.5,0) {};
\node[vertex] (a1) at (1.5,0) {};
\node[vertex] (a2) at (3,0) {};

\node at  (0,0.3) {\tiny 1};
\node at  (8,0.3) {\tiny $m-1$};
\node at (1.5,0.3) {\tiny 2};
\node at (3,0.3) {\tiny 3};


\draw[edge] (a)  to (a1);
\draw[edge] (a1) to (a2);

\path (a2) to node {\dots} (c);
\node [shape=circle,minimum size=2pt, inner sep=1pt] (a3) at (4.5,0) {};
\draw[edge] (a2) to (a3);

\node [shape=circle,minimum size=2pt, inner sep=1pt] (c1) at (6.5,0) {};
\draw[edge] (c1) to (c);

\node at (0.75,-0.3) {\tiny $a_1$};
\node at (7.25,-0.3) {\tiny $a_{m-2}$};
\node at (2.25,-0.3) {\tiny $a_2$};
\node at (3.75,-0.3) {\tiny $a_3$};

\end{tikzpicture}
    \caption{$A_{m-1}$ quiver, $a_{i+1} a_i=0$ for $i=1,2,\ldots,m-2$.}
\label{fig2}
\end{figure}

A useful observation given in \cite[Sec. 3.3]{HKK} is that we do not need to include the object
$L_m$ which is supported near
the marked point $m$, since $L_m $ is quasi-isomorphic to the twisted complex:
\begin{equation}\label{twcx} L_1[m-2] \to L_2[m-3] \to L_3[m-4] \to \ldots \to L_{m-1}
\end{equation} 

Futhermore, in fact, $L_1,L_2,\ldots,L_{m-1}$ generate the partially wrapped Fukaya category
$\mathcal{W}(0;m)$ since the union of $L_1,\ldots, L_{m-1}$ cuts
$\mathbb{D}^2$ into disks each of which has exactly one marked point.  

Next, we give an explicit presentation of the category $\mathcal{W}(0;m_1,m_2)$. In Figure
\ref{fig3} we have a genus 0 surface with 2 boundary components, with $m_1$ marked points on the
inner circular boundary component and $m_2$ marked points on the outer circular boundary
component. We also depicted $m_1+m_2$ objects, which are labeled $P^+_{0},\ldots,
P^+_{m_1}$ and $P^-_{0},\ldots,P_{m_2}^-$. For notational convenience, we have the equalities
$P_0^+ = P_0^-$ and $P_{m_1}^+ =P_{m_2}^-$. Again, by \cite[Lem. 3.3]{HKK}, since the
complement of these objects consists of disks each of which has exactly one marked point, these
objects generate the category $\mathcal{W}(0;m_1,m_2)$. 

\begin{figure}[ht!]
\centering
\begin{tikzpicture}
\tikzset{vertex/.style = {style=circle,draw, fill,  minimum size = 2pt,inner        sep=1pt}}
\tikzset{->-/.style={decoration={ markings,
        mark=at position #1 with {\arrow{>}}},postaction={decorate}}} 
\def \radius {1.5cm}

\foreach \s in {1,2,4,5,7,8,9,10} {
    \draw({360/10 * (\s)}:\radius-1cm) arc ({360/10 *(\s)}:{360/10*(\s+1)}:\radius-1cm);
         \draw ({360/10 * (\s)}:\radius+1cm) arc ({360/10 *(\s)}:{360/10*(\s+1)}:\radius+1cm);
}

\draw({360/10 * (3)}:\radius-1cm) arc ({360/10 *(3)}:{360/10*(4)}:\radius-1cm);    \draw({360/10 *
    (6)}:\radius+1cm) arc ({360/10 *(6)}:{360/10*(7)}:\radius+1cm);

    \draw[->-=.1]({360/10 * (7)}:\radius-1cm) arc ({360/10 *(7)}:{360/10*(6)}:\radius-1cm);

    \draw[->-=.1]({360/10 * (3)}:\radius+1cm) arc ({360/10 *(3)}:{360/10*(4)}:\radius+1cm);

    \foreach \s in {1,3,5} {
\node[vertex] at ({360/10 * (\s)}:\radius-1cm) {} ;

    \draw[blue] ({360/10 * ((\s)+1)}:\radius-1cm) to ({360/10 * ((\s)+1)}:\radius+1cm) ; 
}

  \foreach \s in {7,8,9,10} {
\node[vertex] at ({360/10 * (\s)}:\radius+1cm) {} ;
    \draw[blue] ({360/10 * ((\s)+0.5)}:\radius-1cm) to ({360/10 * ((\s)+0.5)}:\radius+1cm) ;

}

\node at ({360/10 *2}: \radius+1.4cm) {$P_1^+$};
    \node at ({360/10 *4}: \radius+1.4cm) {$P^+_{\cdot}$};
    \node[xshift=-0.9cm] at ({360/10 *6}: \radius+1.4cm) {$P^+_{m_1} = P^-_{m_2}$};
\node at ({360/10 *7.5}: \radius+1.4cm) {$P_{\cdot}^-$};
    \node at ({360/10 *8.5}: \radius+1.4cm) {$P_2^-$};
\node at ({360/10 *9.5}: \radius+1.4cm) {$P_1^-$};
    \node[xshift=0.5cm] at ({360/10 *10.5}: \radius+1.4cm) {$P^+_0= P^-_0$};

\end{tikzpicture}
    \caption{Generating objects in the partially wrapped category of the annulus
    with $(m_1,m_2)$ marked points. }
    \label{fig3}
\end{figure}

The corresponding endomorphism
algebra between the generators is the path algebra of the quiver drawn below.

\begin{equation}
    \begin{tikzpicture}[baseline=-0.8ex]
\matrix (m) [matrix of math nodes,row sep=3em,column sep=2.8em,minimum width=2em]
    {P^+_0 & P^+_1 &\cdots&P^+_{m_1-1}&P^+_{m_1}\\
    P^-_0 & P^-_1 &\cdots&P^-_{m_2-1}&P^-_{m_2}\\};

 \path[-stealth]
    (m-1-1) edge node [left] {\tiny $=$} (m-2-1)
            edge node [above] {} (m-1-2)
    (m-1-2) edge node [above] {} (m-1-3)
    (m-1-3) edge node [above] {} (m-1-4)
    (m-1-4) edge node [above] {} (m-1-5)

    (m-1-5) edge node [left] {\tiny $=$} (m-2-5)

    (m-2-1) edge node [above] {} (m-2-2)
    (m-2-2) edge node [above] {} (m-2-3)
    (m-2-3) edge node [above] {} (m-2-4)
    (m-2-4) edge node [above] {} (m-2-5);

\end{tikzpicture}
\end{equation}

We will next describe how to glue several copies of $\mathcal{W}(0;m_1,m_2)$ to obtain more interesting computations. We start with the following special case.

\subsection{Computation of the partially wrapped Fukaya category for linear gluing}\label{W0-sec}

We next study the case of a genus $0$ surface where two of the boundary holes are distinguished and
allowed to have arbitrarily many marked points. We denote the number of these marked point by $r_0 $
and $r_n$. The remaining boundary holes have exactly $2$ marked points each. 

As auxiliary data, we choose positive integers $r_1,r_2,\ldots, r_{n-1}$ so that the total number of
holes is 
\[ N= 1+r_1+r_2+\ldots+r_{n-1}+1. \]
We consider the derived category of $\mathcal{W}(0;r_0,(2)^{N-2},r_n)$ which depends only on the numbers
$r_0$, $r_n$ and $N$. However, we use the choice of $r_1,\ldots,r_{n-1}$ in
constructing a strong exceptional collection as in Figure \ref{fig4}. 

\begin{figure}[!htbp]
\centering
    \begin{tikzpicture}[thick,scale=0.8, every node/.style={transform shape}]
\tikzset{vertex/.style = {style=circle,draw, fill,  minimum size = 2pt,inner        sep=1.5pt}}
    \tikzset{->-/.style={decoration={ markings,
        mark=at position #1 with {\arrow{>}}},postaction={decorate}}} 

    \draw [thick=1.5] (1,0) arc (0:90:1);
    \draw [->-=.5, thick=1.5] (1,12) arc (360:270:1);
    \draw [thick=1.5](15,12) arc (180:270:1);
    \draw [->-=.6, thick=1.5](15,0) arc (180:90:1);

\draw  [thick=1.5]((5,0) arc (0:180:0.7);
\draw  [thick=1.5]((9,0) arc (0:180:0.7);
\draw  [thick=1.5]((13,0) arc (0:180:0.7);

\draw  [->-=.4, thick=1.5]((5,12) arc (0:-180:0.7);
\draw  [->-=.4, thick=1.5]((9,12) arc (0:-180:0.7);
\draw  [->-=.4, thick=1.5]((13,12) arc (0:-180:0.7);

\draw  [->-=.65, thick=1.5]((4.8,4) arc (360:0:0.5);
\draw  [->-=.65, thick=1.5]((4.8,8) arc (360:0:0.5);

\draw  [->-=.65, thick=1.5]((8.8,3) arc (360:0:0.5);
\draw  [->-=.65,  thick=1.5]((8.8,6) arc (360:0:0.5);
\draw  [->-=.65, thick=1.5]((8.8,9) arc (360:0:0.5);

\draw  [->-=.65, thick=1.5]((12.8,4) arc (360:0:0.5);
\draw  [->-=.65, thick=1.5]((12.8,8) arc (360:0:0.5);

\draw [thick=1.5] (0,7.5) -- (0,11);
\draw [thick=1.5,dashed] (0,7.5) -- (0,6);
\draw [thick=1.5] (0,1) -- (0,6);

\draw [thick=1.5] (16,4.5) -- (16,11);
\draw [thick=1.5, dashed] (16,4.5) -- (16,3);
\draw [thick=1.5] (16,1) -- (16,3);

\draw [blue] (1,0) -- (3.6,0);
\node at (2.3,0.3) {\tiny $P^+_{1,r_1}$};
\draw [blue] (5,0) -- (7.6,0);
\node at (6.3,0.3) {\tiny $P^+_{2,r_2}$};
 \draw [blue,dashed] (9,0) -- (11.6,0);
\draw [blue] (13,0) -- (15,0);
\node at (14.3,0.3) {\tiny $P^+_{n,r_n}$};
 
\draw [blue] (1,12) -- (3.6,12);
\node at (2.3,11.7) {\tiny $P^-_{1,r_0}$};
 \draw [blue] (5,12) -- (7.6,12);
\draw [blue,dashed] (9,12) -- (11.6,12);
\node at (6.3,11.7) {\tiny $P^-_{2,r_1}$};
\draw [blue] (13,12) -- (15,12);
    \node at (14.3,11.7) {\tiny $P^-_{n,r_{n-1}}$};

\draw[blue] (4.3,8.5) -- (4.3,11.3);
    \node at (4.6, 9.8) {\tiny $S_{1,0}$};
\draw[blue, dashed] (4.3,4.5) -- (4.3,7.5);
\draw[blue] (4.3,0.7) -- (4.3,3.5);
    \node at (3.7, 1.6) {\tiny $S_{1, r_1 -1}$};

\draw[blue] (8.3,9.5) -- (8.3,11.3);
    \node at (8.6, 10) {\tiny $S_{2,0}$};
\draw[blue] (8.3,6.5) -- (8.3,8.5);
    \node at (8.6, 7.5) {\tiny $S_{2,1}$};
\draw[blue, dashed] (8.3,3.5) -- (8.3,5.5);
\draw[blue] (8.3,0.7) -- (8.3,2.5);
\node at (8.9, 1.6) {\tiny $S_{2,r_2 -1}$};

\draw[blue] (12.3,8.5) -- (12.3,11.3);
\node at (11.8, 9.9) {\tiny $S_{n-1,0}$};
\draw[blue, dashed] (12.3,4.5) -- (12.3,7.5);
\draw[blue] (12.3,0.7) -- (12.3,3.5);
\node at (11.4, 1.6) {\tiny $S_{n-1,r_{n-1}-1}$};

\draw[blue] (0.4,0.91) -- (4.1,11.33);
\node at (1.8, 6) {\tiny $P_{1,0}^\pm$};
\draw[blue] (0.6,0.8) -- (4.1,7.53);
\node at (3.2, 5) {\tiny $P_{1,1}^+$};
\draw[blue] (0.8,0.59) --  (4.1,3.53);
\node at (2.35, 2.6) {\tiny $P_{1,r_1-1}^+$};
\draw[blue] (0,4.46) -- (3.8,11.49);
\node at (1.55, 8) {\tiny $P_{1,1}^-$};
\draw[blue] (0,8.46) -- (3.61,11.8);
\node at (1.35, 10.3) {\tiny $P_{1,r_0-1}^-$};

\draw[blue] (4.5,0.66) -- (8.1,11.33);
\node at (5.9, 6) {\tiny $P_{2,0}^{\pm}$};
\draw[blue] (4.6,0.63) -- (8.1,8.52);
\node at (6.9, 5) {\tiny $P_{2,1}^{+}$};
\draw[blue] (4.7,0.59) --  (8.1,5.52);
\node at (6.8, 3) {\tiny $P_{2,2}^{+}$};
\draw[blue] (4.8,0.51) -- (8.1,2.52);
\node at (6.8,1.2) {\tiny $P_{2,{r_2 -1}}^{+}$};

\draw[blue] (4.5,4.46) -- (7.8,11.49);
\node at (5.9,8.2) {\tiny $P_{2,1}^{-}$};

\draw[blue] (4.5,8.46) -- (7.61,11.8);
\node at (5.9,10.7) {\tiny $P_{2,{r_1 -1}}^{-}$};

\draw[blue] (12.5,0.66) -- (15.8,11.01);
\node at (13.8, 6) {\tiny $P_{n,0}^{\pm}$};

\draw[blue] (12.6,0.63) -- (16,8.52);
\node at (15.1, 5.5) {\tiny $P_{n,1}^{+}$};
\draw[blue] (12.7,0.59) --  (16,5.52);
\node at (15.5, 4) {\tiny $P_{n,2}^{+}$};
\draw[blue] (12.8,0.51) -- (16,2.52);
\node at (14.8, 2.3) {\tiny $P_{n,{r_n-1}}^{+}$};

\draw[blue] (12.5,4.46) -- (15.5,11.15);
\node at (13.7,8) {\tiny $P_{n,1}^{-}$};
\draw[blue] (12.5,8.46) -- (15.15,11.5);
\node at (13.9,11) {\tiny $P_{n,r_{n-1}-1}^{-}$};

\node[vertex] at  (0,2.73) {};
\node[vertex] at  (0,8) {};
\node[vertex] at  (0,9.73) {};

\node[vertex] at (16,9.76) {};
\node[vertex] at (16,7.02) {};
\node[vertex] at (16,4.9) {};

\node[vertex] at (4.8,11.49) {};
\node[vertex] at (8.8,11.49) {};
\node[vertex] at (12.8,11.49) {} ;

\node[vertex] at (3.8,0.51) {};
\node[vertex] at (7.8,0.51) {};
\node[vertex] at (11.8,0.51) {} ;
\node[vertex] at (15.15,0.51) {} ;

\node[vertex] at (3.8,4) {};
\node[vertex] at (4.8,4) {};
\node[vertex] at (3.8,8) {};
\node[vertex] at (4.8,8) {};

\node[vertex] at (7.8,3) {};
\node[vertex] at (8.8,3) {};
\node[vertex] at (7.8,6) {};
\node[vertex] at (8.8,6) {};
\node[vertex] at (7.8,9) {};
\node[vertex] at (8.8,9) {};

\node[vertex] at (11.8,4) {};
\node[vertex] at (12.8,4) {};
\node[vertex] at (11.8,8) {};
\node[vertex] at (12.8,8) {};

\end{tikzpicture}
    \caption{Generating objects in $\mathcal{W}(0;r_0,2,2,\ldots,2,r_n)$. Top and bottom are
    identified. }
\label{fig4}
\end{figure}

It is easy to observe from Figure \ref{fig4} that the complement of the Lagrangians drawn consist of
disks with precisely one marked point at each boundary. Hence, the objects drawn generate the
partially wrapped Fukaya category $\mathcal{W}(0;r_0,2,2,\ldots,2,r_n)$. The corresponding
quiver algebra is given in Figure \ref{fig5}. The only relations are given by the quadratic relations
\[ ya =0 \text{\ and \ } xb=0 \]
whenever the composition is possible. 

\begin{figure}[!htbp]
\begin{equation}
\begin{tikzpicture}[baseline=-0.8ex]
\matrix (m) [matrix of math nodes,row sep=1.6em,column sep=2em,minimum width=2em]
    {&P^-_{1,0} & P^-_{1,1}&\cdots&P^-_{1,r_0-1}&P^-_{1,r_0}\\
     &P^+_{1,0} & P^+_{1,1}&\cdots&P^+_{1,r_1-1}&P^+_{1,r_1}\\
     &S_{1,0}   & S_{1,1}  &\cdots&S_{1,r_1-1}  &           \\
    P^-_{2,r_1}&P^-_{2,r_1-1}&P^-_{2,r_1-2}&\cdots&P^-_{2,0}& \\
    P^+_{2,r_2}&P^+_{2,r_2-1}&P^+_{2,r_2-2}&\cdots&P^+_{2,0}& \\
    &S_{2,r_2 -1} &S_{2,r_2 -2} & \cdots & S_{2,0}& \\
     & \cdots & \cdots & \cdots   & \cdots & \\
     & \cdots & \cdots & \cdots   & \cdots & \\
    &S_{n-1,r_{n-1}-1} & S_{n-1,r_{n-1}-2} & \cdots &S_{n-1,0}& \\
    & P^-_{n,0} & P^-_{n,1} &\cdots&P^-_{n,r_{n-1}-1}&P^-_{n, r_{n-1}}\\
    & P^+_{n,0} & P^+_{n,1} &\cdots&P^+_{n,r_{n}-1}&P^+_{n,r_{n}} \\};
        
     \path[-stealth]
       (m-1-2) edge node [left] {\scriptsize $=$} (m-2-2)
               edge node [above] {\scriptsize $x_{1,0}$} (m-1-3)
       (m-1-3) edge node [above] {\scriptsize $x_{1,1}$} (m-1-4)
       (m-1-4) edge node [above] {} (m-1-5)
       (m-1-5) edge node [above] {\scriptsize $x_{1,r_0-1}$} (m-1-6)
       
       (m-1-6) edge node [left] {\scriptsize $=$} (m-2-6)

       (m-2-2) edge node [above] {\scriptsize $y_{1,0}$} (m-2-3)
       (m-2-3) edge node [above] {\scriptsize $y_{1,1}$} (m-2-4)
       (m-2-4) edge node [above] {} (m-2-5)
       (m-2-5) edge node [above] {\scriptsize $y_{1,r_1-1}$} (m-2-6)

       (m-3-2) edge node [left] {\scriptsize $a_{1,0}$} (m-2-2)
       (m-3-3) edge node [left] {\scriptsize $a_{1,1}$} (m-2-3)
       (m-3-5) edge node [left] {\scriptsize $a_{1,r_1-1}$} (m-2-5)

       (m-3-2) edge node [left] {\scriptsize $b_{1,0}$} (m-4-2)
       (m-3-3) edge node [left] {\scriptsize $b_{1,1}$} (m-4-3)
       (m-3-5) edge node [left] {\scriptsize $b_{1,r_1-1}$} (m-4-5)

       (m-4-1) edge node [left] {\scriptsize $=$} (m-5-1)
           
       (m-4-2) edge node [above] {\scriptsize $x_{2,r_1-1}$} (m-4-1)
       (m-4-3) edge node [above] {\scriptsize $x_{2,r_1-2}$} (m-4-2)
       (m-4-4) edge node [above] {} (m-4-3)
       (m-4-5) edge node [above] {\scriptsize $x_{2,0}$} (m-4-4)
       
       (m-4-5) edge node [left] {\scriptsize $=$} (m-5-5)

       (m-5-2) edge node [above] {\scriptsize $y_{2,r_2-1}$} (m-5-1)
       (m-5-3) edge node [above] {\scriptsize $y_{2,r_2-2}$} (m-5-2)
       (m-5-4) edge node [above] {} (m-5-3)
       (m-5-5) edge node [above] {\scriptsize $y_{2,0}$} (m-5-4)

       (m-6-2) edge node [left] {\scriptsize $a_{2,r_2-1}$} (m-5-2)
       (m-6-3) edge node [left] {\scriptsize $a_{2,r_2-2}$} (m-5-3)
       (m-6-5) edge node [left] {\scriptsize $a_{2,0}$} (m-5-5)

       (m-6-2) edge node [left] {\scriptsize $b_{2,r_2-1}$} (m-7-2)
       (m-6-3) edge node [left] {\scriptsize $b_{2,r_2-2}$} (m-7-3)
       (m-6-5) edge node [left] {\scriptsize $b_{2,0}$} (m-7-5)

       (m-9-2) edge node [left] {\scriptsize $a_{n-1,r_{n-1}-1}$} (m-8-2)
       (m-9-3) edge node [left] {\scriptsize $a_{n-1,r_{n-1}-2}$} (m-8-3)
       (m-9-5) edge node [left] {\scriptsize $a_{n-1,0}$} (m-8-5)

       (m-9-2) edge node [left] {\scriptsize $b_{n-1,r_{n-1}-1}$} (m-10-2)
       (m-9-3) edge node [left] {\scriptsize $b_{n-1,r_{n-1}-2}$} (m-10-3)
       (m-9-5) edge node [left] {\scriptsize $b_{n-1,0}$} (m-10-5)

       (m-10-2) edge node [left] {\scriptsize $=$} (m-11-2)
              edge node [above] {\scriptsize $x_{n,0}$} (m-10-3)
       (m-10-3) edge node [above] {\scriptsize $x_{n,1}$} (m-10-4)
       (m-10-4) edge node [above] {} (m-10-5)
       (m-10-5) edge node [above] {\scriptsize $x_{n,r_n-1}$} (m-10-6)
       
       (m-10-6) edge node [left] {\scriptsize $=$} (m-11-6)

       (m-11-2) edge node [above] {\scriptsize $y_{n,0}$} (m-11-3)
       (m-11-3) edge node [above] {\scriptsize $y_{n,1}$} (m-11-4)
       (m-11-4) edge node [above] {} (m-11-5)
       (m-11-5) edge node [above] {\scriptsize $y_{n,r_n-1}$} (m-11-6);

\end{tikzpicture}
\end{equation}
    \caption{Quiver describing $\mathcal{W}(0;r_0,2,2,\ldots,2,r_n)$} 
    \label{fig5}
\end{figure}

Next, we are going to modify our surface along with the exceptional collection.
One can note from Figure \ref{fig5} that there are full and faithful embeddings 
\[  \mathcal{W}(0;r_i,r_{i+1}) \to \mathcal{W}(0;r_0,2,2,\ldots,2, r_n) \] 
for $i=0,1,\ldots, n-1$.
Indeed, the genus 0 surface in Figure \ref{fig4} is constructed by connecting $n$ annuli along
strips which are given by tubular neighborhoods of curves $S_{i,j}$. Now, in attaching these
strips a choice is made: the strips are attached in the most obvious way as in the left part of
Figure \ref{fignew}. In general, a more complicated attachment of these strips
are encoded by a sequence of permutations $(\sigma_1,\sigma_2,\ldots, \sigma_{n-1}) \in
\mathfrak{S}_{r_1} \times \mathfrak{S}_{r_2} \times \ldots, \times
\mathfrak{S}_{r_{n-1}}$ where
$\mathfrak{S}_{r_i}$ is the permutation group on $r_i$ elements. The effect of a transposition
on the construction of the surface is described in Figure \ref{fignew}. In general, this will change the
topological type of the surface. An example is given in Figure \ref{fignew2}. We omit the proof
of the following elementary proposition which determines the topological type of the resulting
surface and the distribution of the marked points in terms of the data of the permutations
used in attaching the strips. 

\begin{prop} \label{topology} Suppose that the attachments of strips are made using the set of permutations
    $\sigma=(\sigma_1,\sigma_2,\ldots,\sigma_{n-1}) \in \mathfrak{S}_{r_1} \times
    \mathfrak{S}_{r_2} \times \ldots \times \mathfrak{S}_{r_{n-1}}$, and let $\tau = (\tau_1,\tau_2,\ldots,
    \tau_{n-1}) \in \mathfrak{S}_{r_1} \times
    \mathfrak{S}_{r_2} \times \ldots \times \mathfrak{S}_{r_{n-1}}$, be the set of permutations given by $\tau_i (j) = j-1$ for all $j \in
    \mathbb{Z}/r_i$. 
    The number of boundary components of the resulting surface $X$ is equal to 
    \[ d = 2+ \sum_{i=1}^{n-1} \sum_{k=1}^{r_i} d_{ik} \]
    where $d_{ik}$ is the number of $k$-cycles in the cycle decomposition of $[\sigma_i,\tau_i]$. 
    We have two special boundary components, equipped with $r_0$ and $r_n$ components respectively.
    The remaining components are in bijection with the cycles in cycle decompositions of 
    $[\sigma_i, \tau_i]$ for $i=1, \ldots, n-1$. A component corresponding to a $k$-cycle is equipped with $2k$ marked points.

Finally, the genus $g$ of $X$ can be computed using the following formula for the Euler characteristic of $X$ given by:
    \[ \chi(X) = 2-2g - d = - \sum_{i=1}^{n-1} r_i. \] 
\qed
\end{prop}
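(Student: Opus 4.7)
The proof proceeds by (a) computing $\chi(X)$ directly from the construction, (b) enumerating boundary components combinatorially, and then (c) deducing $g$ via the relation $\chi(X) = 2 - 2g - d$.

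\emph{Step (a), Euler characteristic.} The surface $X$ is built out of $n$ annuli (each with $\chi=0$) and $\sum_{i=1}^{n-1} r_i$ rectangular strips (each with $\chi=1$), glued along arcs (each with $\chi=1$). Every strip is attached along exactly two arcs, so additivity of $\chi$ yields
$$\chi(X) \;=\; 0\cdot n \;+\; \sum_{i=1}^{n-1} r_i \;-\; 2\sum_{i=1}^{n-1} r_i \;=\; -\sum_{i=1}^{n-1} r_i.$$

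\emph{Step (b), boundary components.} The two extremal boundary circles, carrying the $r_0$ and $r_n$ marked points, are untouched by the gluing procedure and contribute $2$ to $d$. In the $i$-th gap between consecutive annuli, label the $r_i$ strips by $\mathbb{Z}/r_i$. A boundary component of $X$ lying in this gap alternates between (i) strip crossings, (ii) short arcs on the upper annulus connecting strip $k$ to strip $k-1$ (motion by $\tau_i$), and (iii) short arcs on the lower annulus which, after twisting the labels by the attachment permutation $\sigma_i$, move by $\sigma_i\tau_i^{-1}\sigma_i^{-1}$. Combining one full period yields the commutator $[\sigma_i,\tau_i]$ acting on the set of left sides of strips; its orbits are precisely the boundary components of $X$ lying in the $i$-th gap. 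Since one arc of an annulus containing a single $(2)$-type marked point is traversed each time we step between adjacent strips, and each cycle visits each strip in it twice (once on each annulus), a $k$-cycle produces a boundary circle carrying $2k$ marked points. Therefore
$$d \;=\; 2 \;+\; \sum_{i=1}^{n-1}\sum_{k=1}^{r_i} d_{ik}.$$

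\emph{Step (c), genus.} Inserting this formula into $2-2g-d = -\sum_{i=1}^{n-1} r_i$ isolates $g$ and gives the stated expression.

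The main obstacle is the bookkeeping in Step (b): one must fix consistent orientation conventions on the two annulus boundaries so that the composite of the four alternating motions really equals $[\sigma_i,\tau_i]$ (and not merely a conjugate or an inverse), and one must verify the ``$2k$ marked points per orbit'' count. A useful sanity check is the case $\sigma_i=\mathrm{id}$ for all $i$: then $[\sigma_i,\tau_i]=\mathrm{id}$ has $r_i$ fixed points, so $d_{i,1}=r_i$ and $d_{ik}=0$ for $k\ge 2$, giving $d = 2+\sum_i r_i$ boundary components each with two marked points, which recovers the surface $\WW(0;r_0,(2)^{\sum_i r_i},r_n)$ of Section \ref{W0-sec}.
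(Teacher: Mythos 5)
The paper gives no proof of this proposition (it is stated as "elementary" and closed with a \qed), so there is nothing to compare against; your argument is correct and supplies exactly the kind of cut-and-paste computation the authors had in mind: additivity of $\chi$ over the $n$ annuli and $\sum_i r_i$ strips glued along two arcs each, plus tracing boundary arcs in each gap via a first-return permutation. Concerning the bookkeeping worry you flag in Step (b): it is harmless, because whatever orientation conventions you fix, the first-return map is of the form $\tau_i^{\pm1}\sigma_i^{-1}\tau_i^{\mp1}\sigma_i$, which is conjugate to $[\sigma_i,\tau_i]^{\pm1}$ (indeed $\tau[\sigma,\tau^{-1}]\tau^{-1}=[\sigma,\tau]^{-1}$), and conjugation and inversion preserve cycle type, which is all the statement uses; likewise the $2k$ count follows since each step of the return map traverses exactly one arc (hence one marked point) on each of the two annuli, independently of how the strip sides are distributed. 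Your identity-permutation sanity check, together with the paper's own cross-checks (the $K_0$ rank count after Proposition \ref{topology} and the example $\WW(2;1,6,6,1)$ of Figure \ref{fignew2}, where $[\sigma_i,\tau_i]$ is a $3$-cycle in $\mathfrak{S}_3$), confirms the formulas.
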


Note that changing the permutations does not affect the Euler characteristic of the underlying
topological surface since different permutations are related by cutting and gluing the strips. Note
also that by \cite[Thm. 5.1]{HKK}, the Grothendieck group $K_0(\WW(X, \Lambda))$ is
isomorphic to $H_1(X, \partial X \setminus \Lambda)$ and the rank of the latter group is given by 
\[ \# |\Lambda| - \chi(X), \] 
when $\Lambda \neq \emptyset$. Using Prop. \ref{topology}, this number can
be computed in the above case as:
\[ r_0 + \sum_{i=1}^{n-1} \sum_{k=1}^{r_i} 2k d_{ik} + r_n + \sum_{i=1}^{n-1} r_i =  r_0 + 3
\sum_{i=1}^{n-1} r_i + r_n \]
which is equal to the number of objects given in Figure \ref{fig5} as it should.

The resulting algebra of our generators has a quiver description that is very similar to Figure
\ref{fig5}. The only modification needed is in the target of the maps $b_{i,j}$. Namely, if we modify the
attaching strips according to a permutation $(\sigma_1,\sigma_2,\ldots,
\sigma_{n-1})$, then in Figure \ref{fig5}, we need to let

\[ b_{i,j} : S_{i,j} \to P^-_{i+1, r_i -1 - \sigma_{i}(j)} \]

\begin{figure}[!htbp]
\centering
    \begin{tikzpicture}[thick,scale=0.8, every node/.style={transform shape}]
\tikzset{vertex/.style = {style=circle,draw, fill,  minimum size = 2pt,inner        sep=1pt}}
    \tikzset{->-/.style={decoration={ markings,
        mark=at position #1 with {\arrow{>}}},postaction={decorate}}} 
\tikzset{->-/.style={decoration={ markings,
        mark=at position #1 with {\arrow{>}}},postaction={decorate}}} 
\def \radius {1.5cm}

\draw (1.45,-5.278) to (4.55,-5.278);

\draw [rounded corners=1mm] (1.45,-3.654)--(1.45,-4.466)-- (4.55,-4.466) -- (4.55,-3.654) --cycle;
\draw [rounded corners=1mm] (1.45,6.09)--(1.45,5.278)-- (4.55,5.278) -- (4.55,6.09) --cycle;
\draw [rounded corners=1mm] (1.45,2.842)--(1.45,2.03)-- (4.55,2.03) -- (4.55,2.842) --cycle;
\draw [rounded corners=1mm] (1.45,0.406)--(1.45,1.218)-- (4.55,1.218) -- (4.55,0.406) --cycle;
\draw [rounded corners=1mm] (1.45,-0.406)--(1.45,-1.218)-- (4.55,-1.218) -- (4.55,-0.406) --cycle;

\draw (1.45,6.902) to (4.55,6.902);

\draw (1.45,2.842) to (4.55,2.842);
\draw (1.45,3.654) to (4.55,3.654);

\draw (1.45,-1.218) to (4.55,-1.218);
\draw (1.45,-2.03) to (4.55,-2.03);

\draw[blue] (3,-0.406) to (3,0.406);
\draw[blue] (3,1.218) to (3,2.03);
\draw[blue] (3,-1.218) to (3,-2.03);
\draw[blue] (3,2.842) to (3,3.654);
\draw[blue] (3,-4.466) to (3,-5.278);
\draw[blue] (3,6.902) to (3,6.09);

\draw[dashed] (3,5.1) to (3,3.8);
\draw[dashed] (3,-3.5) to (3,-2);

\node at (3.4, 6.5) {\tiny $S_{i,0}$};
\node at (3.5, 3.2) {\tiny $S_{i,j-1}$};
\node at (3.4, 1.6) {\tiny $S_{i,j}$};
\node at (3.5, 0) {\tiny $S_{i,j+1}$};
\node at (3.5, -1.6) {\tiny $S_{i,j+2}$};
\node at (3.6, -4.8) {\tiny $S_{i,r_i-1}$};

\foreach \s in {0,2,4,6,8,10,14} {
    \node[vertex][yshift= {(\s)*0.812cm}] at (2, -5.278){};
}

\foreach \s in {0,4,6,8,10,12, 14} {
    \node[vertex][yshift= {(\s)*0.812cm}] at (4, -4.466){};}

\draw[->-=1, thick=1.5](7,0.812) -- (9,0.812);

    \begin{scope}[xshift=10cm]

\draw (1.45,-5.278) to (4.55,-5.278);

\draw [rounded corners=1mm] (1.45,-3.654)--(1.45,-4.466)-- (4.55,-4.466) -- (4.55,-3.654) --cycle;
\draw [rounded corners=1mm] (1.45,6.09)--(1.45,5.278)-- (4.55,5.278) -- (4.55,6.09) --cycle;

\draw (1.45,6.902) to (4.55,6.902);

\draw (1.45,2.842) to (4.55,2.842);
\draw (1.45,3.654) to (4.55,3.654);

\draw (1.45,2.03) to[in=180,out=0] (4.55,0.406);
\draw (1.45,1.218) to[in=180,out=0] (4.55,-0.406);

\draw (1.45,0.406) to [in=210,out=0] (2.35,0.83);
\draw (2.95,1.33) to [in=180,out=30] (4.55,2.03);

\draw (1.45,-0.406) to [in=210,out=0] (2.85,0.3);
\draw (3.6,0.78) to [in=180,out=30] (4.55,1.218);

\draw (1.45,-1.218) to (4.55,-1.218);
\draw (1.45,-2.03) to (4.55,-2.03);

\draw(1.45,1.218) to (1.45,0.406);
 \draw(4.55,1.218) to (4.55,0.406);
 \draw(1.45,-1.218) to (1.45,-0.406);
 \draw(4.55,-1.218) to (4.55,-0.406);
 \draw(1.45,2.03) to (1.45,2.842);
 \draw(4.55,2.03) to (4.55,2.842);

\draw[blue] (3.6,0.77) to (3.6,1.72);
\draw[blue] (2.5,0.83) to (2.5,1.642);
\draw[blue] (3,-1.218) to (3,-2.03);
\draw[blue] (3,2.842) to (3,3.654);
\draw[blue] (3,-4.466) to (3,-5.278);
\draw[blue] (3,6.902) to (3,6.09);

\draw[dashed] (3,5.1) to (3,3.8);

\draw[dashed] (3,-3.5) to (3,-2);

\node at (3.4, 6.5) {\tiny $S_{i,0}$};
\node at (3.5, 3.2) {\tiny $S_{i,j-1}$};
\node at (2.1, 1.45) {\tiny $S_{i,j}$};
\node at (4.1, 1.4) {\tiny $S_{i,j+1}$};
\node at (3.5, -1.6) {\tiny $S_{i,j+2}$};
\node at (3.6, -4.8) {\tiny $S_{i,r_i-1}$};

\foreach \s in {0,2,4,10,14} {
    \node[vertex][yshift= {(\s)*0.812cm}] at (2, -5.278){};
}

 \node[vertex][yshift= {(6)*0.812cm}] at (2, -5.15){};
 \node[vertex][yshift= {(8)*0.812cm}] at (2, -5.378){};

\foreach \s in {0,4,10,12, 14} {
    \node[vertex][yshift= {(\s)*0.812cm}] at (4, -4.466){};}

  \node[vertex][yshift= {(6)*0.812cm}] at (4, -4.366){};
 \node[vertex][yshift= {(8)*0.812cm}] at (4, -4.566){};

\end{scope}

    \end{tikzpicture}
\caption{Effect of the permutation $(j,j+1)$ in $\mathfrak{S}_{r_i}$} 
\label{fignew}

\end{figure} 

\begin{figure}[!htbp]
\centering
    \begin{tikzpicture}[thick,scale=0.8, every node/.style={transform shape}]
\tikzset{vertex/.style = {style=circle,draw, fill,  minimum size = 2pt,inner        sep=1pt}}
    \tikzset{->-/.style={decoration={ markings,
        mark=at position #1 with {\arrow{>}}},postaction={decorate}}} 
\tikzset{->-/.style={decoration={ markings,
        mark=at position #1 with {\arrow{>}}},postaction={decorate}}} 
\def \radius {1.5cm}

\begin{scope}[xscale=0.9, yscale=1]
 




\draw (1.45,2.03) to [in=270,out=180] (1,2.5); 
\draw (4.55,2.03) to [in=270,out=0] (5,2.5); 

\draw (1.45,-2.03) to [in=90,out=180] (1,-2.5); 
\draw (4.55,-2.03) to [in=90,out=0] (5,-2.5);

\draw (1.45,2.03) to[in=180,out=0] (4.55,0.406);
\draw (1.45,1.218) to[in=180,out=0] (4.55,-0.406);

\draw (1.45,0.406) to [in=210,out=0] (2.35,0.83);
\draw (2.95,1.33) to [in=180,out=30] (4.55,2.03);

\draw (1.45,-0.406) to [in=210,out=0] (2.85,0.3);
\draw (3.6,0.78) to [in=180,out=30] (4.55,1.218);

    \draw (1.45,-1.218) to (4.55,-1.218);
\draw (1.45,-2.03) to (4.55,-2.03);

\draw(1.45,1.218) to (1.45,0.406);
 \draw(4.55,1.218) to (4.55,0.406);
 \draw(1.45,-1.218) to (1.45,-0.406);
 \draw(4.55,-1.218) to (4.55,-0.406);

\draw[blue] (3.6,0.77) to (3.6,1.72);
\draw[blue] (2.5,0.83) to (2.5,1.642);
\draw[blue] (3,-1.218) to (3,-2.03);

\draw (-2, -2) to (-2, 2); 
\draw (-2,2) to [in=270,out=0] (-1.45,2.5); 
\draw (-2,-2) to [in=90,out=0] (-1.45,-2.5); 

\draw[blue] (-1.45,2.5) to (1,2.5);
\draw[blue] (-1.45,-2.5) to (1,-2.5);

\draw[blue] (-1.75, -2.05) to (1.05, 2.25);
\draw[blue] (-1.55, -2.2) to (1.45, 0.812);
\draw[blue] (-1.45, -2.35) to (1.45, -0.812);

\draw[blue](-1.95, -0.812) +(6.5,0) to (9.1, 2.2);
\draw[blue](-1.95, 0.812) +(6.5,0) to (9, 2.35);

\draw[blue](-1.75, -2.05) +(6.5,0) to (9.25, 2.05);
\draw[blue](-1.55, -2.2)+(6.45,0) to (9.45, 0.812);
\draw[blue](-1.45, -2.35)+(6.45,0) to (9.45, -0.812);

\draw[blue] (-1.55, -2.2)+(14.45,0) to (15.75, 2.05);
\draw[blue] (12.55,-0.812) to (15.6, 2.2);
\draw[blue] (12.55,0.812) to (15.5, 2.35);

\node[vertex] at (2, 1.12){}; 
\node[vertex] at (2, -0.27){}; 
\node[vertex] at (2, -2.03){}; 

\node[vertex] at (4, 1.93){}; 
\node[vertex] at (4, 0.5){};

\node[vertex] at (4, -1.218){}; 

\node[vertex] at (-2 , 0){};

\begin{scope}[xshift=8cm]

\draw (1.45,2.03) to [in=270,out=180] (1,2.5); 
\draw (4.55,2.03) to [in=270,out=0] (5,2.5); 

\draw (1.45,-2.03) to [in=90,out=180] (1,-2.5); 
\draw (4.55,-2.03) to [in=90,out=0] (5,-2.5);

\draw (1.45,2.03) to[in=180,out=0] (4.55,0.406);
\draw (1.45,1.218) to[in=180,out=0] (4.55,-0.406);

\draw (1.45,0.406) to [in=210,out=0] (2.35,0.83);
\draw (2.95,1.33) to [in=180,out=30] (4.55,2.03);

\draw (1.45,-0.406) to [in=210,out=0] (2.85,0.3);
\draw (3.6,0.78) to [in=180,out=30] (4.55,1.218);

    \draw (1.45,-1.218) to (4.55,-1.218);
\draw (1.45,-2.03) to (4.55,-2.03);

\draw(1.45,1.218) to (1.45,0.406);
 \draw(4.55,1.218) to (4.55,0.406);
 \draw(1.45,-1.218) to (1.45,-0.406);
 \draw(4.55,-1.218) to (4.55,-0.406);

\draw[blue] (3.6,0.77) to (3.6,1.72);
\draw[blue] (2.5,0.83) to (2.5,1.642);
\draw[blue] (3,-1.218) to (3,-2.03);

\draw (8, -2) to (8, 2); 
\draw (8,2) to [in=270,out=180] (7.45,2.5); 
\draw (8,-2) to [in=90,out=180] (7.45,-2.5); 

\draw[blue] (7.45,2.5) to (5,2.5);
\draw[blue] (7.45,-2.5) to (5,-2.5);

\node[vertex] at (2, 1.12){}; 
\node[vertex] at (2, -0.27){}; 
\node[vertex] at (2, -2.03){}; 

\node[vertex] at (4, 1.93){}; 
\node[vertex] at (4, 0.5){};

\node[vertex] at (4, -1.218){}; 

\node[vertex] at (8 , 0){};

 \end{scope} 

\draw[blue] (9,2.5) to (5,2.5);
\draw[blue] (9,-2.5) to (5,-2.5);

\end{scope}

\end{tikzpicture}

\caption{Connecting three annuli according to the permutations
$\sigma_{1}=\sigma_2 : (1,2,3) \to (2,1,3)$. Generating objects in $\WW(2;1,6,6,1)$} 

\label{fignew2}

\end{figure}

\subsection{Computation of the partially wrapped Fukaya category for circular gluing}\label{W1-sec}

We start with the case of a punctured torus and then will consider a modification leading to higher genus surfaces. 

In Figure \ref{fig8} we depicted the $n$-punctured torus with $2$-marked points at each boundary
components. As before, we choose auxiliary data given by integers $r_0,r_1,\ldots, r_{n-1}$ and we
also write $r_n=r_0$. The derived category of $\mathcal{W}(1;(2)^N)$ only depends on the
total number of holes 
\[ N = r_0 + r_1+ \ldots + r_{n-1}. \]
Note that each boundary hole has exactly 2 marked points.  

Again, it is easy to observe from Figure \ref{fig8} that the complement of the Lagrangians drawn consists of
disks with precisely one marked point at each boundary. Hence, the objects drawn generate the
partially wrapped Fukaya category $\mathcal{W}(1;2,2,\ldots,2)$. The corresponding
quiver algebra is given in Figure \ref{fig9}. The only relations are given by the quadratic relations
\[ ya =0 \text{\ and \ } xb=0 \]
whenever the composition is possible.

As in Section \ref{W0-sec}, we can do a more complicated attachment of bands that form the tubular
neighborhood of the objects $S_{i,j}$ using a set of permutations $\sigma = (\sigma_0,
\sigma_1,\ldots, \sigma_{n-1}) \in \mathfrak{S}_{r_0} \times \mathfrak{S}_{r_1} \times \ldots
\mathfrak{S}_{r_{n-1}}$. The topology of the resulting surface is determined by the following
analogue of Prop. \ref{topology}.

\begin{prop} \label{topology1} Suppose that the attachments of strips are made using the set of permutations
    $\sigma=(\sigma_0,\sigma_1,\ldots,\sigma_{n-1}) \in \mathfrak{S}_{r_0} \times
    \mathfrak{S}_{r_1} \times \ldots \times \mathfrak{S}_{r_{n-1}}$, and let $\tau =
    (\tau_0,\tau_1,\ldots, \tau_{n-1}) \in \mathfrak{S}_{r_0} \times
    \mathfrak{S}_{r_1} \times \ldots \times \mathfrak{S}_{r_{n-1}}$, be the set of permutations given by $\tau_i (j) = j-1$ for all $j \in
    \mathbb{Z}/r_i$. 
    The number of boundary components of the resulting surface $X$ is equal to 
    \[ d = \sum_{i=1}^{n-1} \sum_{k=1}^{r_i} d_{ik} \]
    where $d_{ik}$ is the number of $k$-cycles in the cycle decomposition of $[\sigma_i,\tau_i]$. 
        The boundary components are in bijection with the cycles in cycle decompositions of 
    $[\sigma_i, \tau_i]$ for $i=0, \ldots, n-1$, where a component corresponding to a $k$-cycle is equipped with $2k$ marked points.

Finally, the genus $g$ of $X$ can be computed using the following formula for the Euler characteristic of $X$ given by:
    \[ \chi(X) = 2-2g - d = - \sum_{i=0}^{n-1} r_i. \] 
\qed
\end{prop}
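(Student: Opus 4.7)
The proof of this statement parallels very closely that of Proposition \ref{topology}; the only essential difference is the absence of the two ``cap'' boundary components that arise in the chain (linear) case, reflecting the cyclic closure of the ring arrangement. I would proceed as follows.

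First, the Euler characteristic computation is immediate and identical to the linear case. The surface $X$ is obtained from the disjoint union of $n$ annuli by attaching $\sum_{i=0}^{n-1} r_i$ strips (1-handles). Each annulus contributes $\chi=0$ and each 1-handle reduces $\chi$ by $1$, giving $\chi(X) = -\sum_{i=0}^{n-1} r_i$. Given the boundary count $d$, the genus $g$ is then read off from $\chi = 2 - 2g - d$.

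Next, to count boundary components and their marked points, I would use a local-to-global tracing argument around each interface. Focus on annulus $A_i$ together with the $r_i$ strips attaching it to $A_{i+1}$. The boundary of $X$ near this interface decomposes into arcs of two types: arcs on $\partial A_i$ between consecutive attaching regions, and arcs along strip edges. Labeling the strips by $\mathbb{Z}/r_i$ using the cyclic order on $A_i$'s boundary, the arcs on $A_i$'s side glue by the shift $\tau_i$, while the arcs on $A_{i+1}$'s side glue by $\sigma_i^{-1} \tau_i \sigma_i$ (the same shift read through the permutation $\sigma_i$). Composing these two identifications, the monodromy describing which strip-end one arrives at after traversing one ``step'' of $\partial X$ at this interface is exactly the commutator $[\sigma_i, \tau_i]$. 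Therefore the boundary components of $X$ that pass through the $i$-th interface region are in bijection with the cycles of $[\sigma_i, \tau_i]$, and a cycle of length $k$ corresponds to a boundary component that winds $k$ times around that interface, collecting $2k$ marked points (two per passage: one at each endpoint of the attaching region crossed). Since the arrangement is a ring, every boundary component of $X$ is accounted for exactly once across $i=0,\ldots,n-1$, giving $d = \sum_{i=0}^{n-1}\sum_{k=1}^{r_i} d_{ik}$ (without the ``$+2$'' present in the chain case).

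The main technical subtlety, as in Proposition \ref{topology}, is to make the commutator identification rigorous. I would do this by induction on the word length of each $\sigma_i$ as a product of adjacent transpositions in $\mathfrak{S}_{r_i}$. The base case $\sigma_i = \mathrm{id}$ is a direct verification: all strips are parallel, $[\mathrm{id},\tau_i] = \mathrm{id}$, so each of the $r_i$ ``slots'' between consecutive parallel strips contributes a boundary arc, matching $r_i$ trivial cycles. For the inductive step, one applies a transposition $(j,j+1)$ to some $\sigma_i$; the local effect on $X$ is precisely the cut-and-reglue operation illustrated in Figure \ref{fignew}, and the corresponding change in the boundary component structure matches the change in the cycle type of $[\sigma_i,\tau_i]$ under right-multiplication of $\sigma_i$ by $(j,j+1)$. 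Once this is verified, the statements on boundary count, marked point distribution, and (via the Euler characteristic computation above) the genus all follow.
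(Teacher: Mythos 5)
Your proposal is correct, but note that there is no argument in the paper to compare it with: the authors explicitly omit the proof of Proposition \ref{topology} as elementary, and Proposition \ref{topology1} is stated as its analogue with the same \qed-without-proof treatment. Your sketch supplies exactly the right ingredients. The Euler characteristic count ($n$ annuli of $\chi=0$ plus $\sum_{i=0}^{n-1} r_i$ one-handles) is immediate, and your boundary analysis is the correct mechanism: each boundary circle of an annulus carries strip feet from only one interface, and the two sides of a strip join the two circles of a single interface, so every boundary component of $X$ is confined to one interface --- this locality is the structural point you assert and should state explicitly, since it is what makes the ring case a clean sum over $i=0,\ldots,n-1$ with no leftover ``cap'' components (incidentally, your summation range corrects what is evidently a typo in the displayed formula for $d$ in the statement, whose lower limit should be $i=0$ to match the sentence that follows it). The per-period monodromy you compute is the commutator only up to inversion and conjugation, depending on orientation and labelling conventions, but this is harmless because only the cycle type of $[\sigma_i,\tau_i]$ enters the statement; one sanity check against the paper's own data (e.g.\ $\sigma_i(x)=-k_ix$, giving $[\sigma_i,\tau_i](x)=x+k_i+1$ in the proof of Theorem A, or the example of Figure \ref{fignew2}) confirms the identification. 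Each period of the boundary traversal crosses one arc on the $A_i$ side and one on the $A_{i+1}$ side, each containing a single marked point, which gives the $2k$ count. Your proposed induction on adjacent transpositions (via the regluing of Figure \ref{fignew}) is a legitimate way to nail everything down but is not really needed once the direct tracing is done carefully; if you do pursue it, the algebraic counterpart is $[\sigma_i(j,j+1),\tau_i]=(j,j+1)\,[\sigma_i,\tau_i]\,(j-1,j)$, so the cycle type changes by two elementary split/merge operations that must be matched against the geometric effect of the cut-and-reglue.
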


Again by \cite[Thm. 5.1]{HKK}, the rank of the Grothendieck group $K_0(\WW(X, \Lambda))$ can
be computed in the above case as:
\[ \sum_{i=1}^{n-1} \sum_{k=0}^{r_i} 2k d_{ik} + \sum_{i=0}^{n-1} r_i =  3
\sum_{i=0}^{n-1} r_i \]
which is equal to the number of objects given in Figure \ref{fig5} as it should.

Finally, as in the previous section, the resulting algebra of our generators has a quiver description that is very similar to Figure
\ref{fig9}. The only modification needed is in the target of the maps $b_{i,j}$. Namely, if we modify the
attaching strips according to a permutation $(\sigma_0,\sigma_1,\ldots,
\sigma_{n-1})$, then in Figure \ref{fig9}, we need to let

\[ b_{i,j} : S_{i,j} \to P^-_{i+1, r_i -1 - \sigma_{i}(j)} \]

\begin{figure}[!htbp]
\centering
\begin{tikzpicture}[thick,scale=0.8, every node/.style={transform shape}]

\tikzset{vertex/.style = {style=circle,draw, fill,  minimum size = 2pt,inner        sep=1.5pt}}
    \tikzset{->-/.style={decoration={ markings,
        mark=at position #1 with {\arrow{>}}},postaction={decorate}}} 

    \draw [thick=1.5] (1,0) arc (0:90:1);
    \draw [->-=.5, thick=1.5] (1,12) arc (360:270:1);
    \draw [thick=1.5](15,12) arc (180:270:1);
    \draw [->-=.6, thick=1.5](15,0) arc (180:90:1);

\draw  [thick=1.5]((5,0) arc (0:180:0.7);
\draw  [thick=1.5]((9,0) arc (0:180:0.7);
\draw  [thick=1.5]((13,0) arc (0:180:0.7);

\draw  [->-=.4, thick=1.5]((5,12) arc (0:-180:0.7);
\draw  [->-=.4, thick=1.5]((9,12) arc (0:-180:0.7);
\draw  [->-=.4, thick=1.5]((13,12) arc (0:-180:0.7);

\draw  [->-=.65, thick=1.5]((4.8,4) arc (360:0:0.5);
\draw  [->-=.65, thick=1.5]((4.8,8) arc (360:0:0.5);

\draw  [->-=.65, thick=1.5]((8.8,3) arc (360:0:0.5);
\draw  [->-=.65,  thick=1.5]((8.8,6) arc (360:0:0.5);
\draw  [->-=.65, thick=1.5]((8.8,9) arc (360:0:0.5);

\draw  [->-=.65, thick=1.5]((12.8,4) arc (360:0:0.5);
\draw  [->-=.65, thick=1.5]((12.8,8) arc (360:0:0.5);

\draw  [thick=1.5]((0,9) arc (90:-90:0.5);
\draw  [thick=1.5]((0,5) arc (90:-90:0.5);

\draw  [->-=.8, thick=1.5]((16,8) arc (270:90:0.5);
\draw  [->-=.8, thick=1.5]((16,4) arc (270:90:0.5);

\node[vertex] at (0.51,11.15) {} ;
\node[vertex] at  (0.5,8.5) {};
\node[vertex] at  (0.5,4.5) {};

\node[vertex] at  (15.5,8.5) {};
\node[vertex] at  (15.5,4.5) {};

\draw [blue] (1,0) -- (3.6,0);
\node at (2.3,0.3) {\tiny $P^+_{1,r_1}$};
\draw [blue] (5,0) -- (7.6,0);
\node at (6.3,0.3) {\tiny $P^+_{2,r_2}$};
 \draw [blue,dashed] (9,0) -- (11.6,0);
\draw [blue] (13,0) -- (15,0);
\node at (14.3,0.3) {\tiny $P^+_{n,r_n}$};
 
\draw [blue] (1,12) -- (3.6,12);
\node at (2.3,11.7) {\tiny $P^-_{1,r_0}$};
 \draw [blue] (5,12) -- (7.6,12);
\draw [blue,dashed] (9,12) -- (11.6,12);
\node at (6.3,11.7) {\tiny $P^-_{2,r_1}$};
\draw [blue] (13,12) -- (15,12);
    \node at (14.3,11.7) {\tiny $P^-_{n,r_{n-1}}$};

\draw[blue] (4.3,8.5) -- (4.3,11.3);
    \node at (4.6, 9.8) {\tiny $S_{1,0}$};
\draw[blue, dashed] (4.3,4.5) -- (4.3,7.5);
\draw[blue] (4.3,0.7) -- (4.3,3.5);
    \node at (3.7, 1.6) {\tiny $S_{1, r_1 -1}$};

\draw[blue] (8.3,9.5) -- (8.3,11.3);
    \node at (8.6, 10) {\tiny $S_{2,0}$};
\draw[blue] (8.3,6.5) -- (8.3,8.5);
    \node at (8.6, 7.5) {\tiny $S_{2,1}$};
\draw[blue, dashed] (8.3,3.5) -- (8.3,5.5);
\draw[blue] (8.3,0.7) -- (8.3,2.5);
\node at (8.9, 1.6) {\tiny $S_{2,r_2 -1}$};

\draw[blue] (12.3,8.5) -- (12.3,11.3);
\node at (11.8, 9.9) {\tiny $S_{n-1,0}$};
\draw[blue, dashed] (12.3,4.5) -- (12.3,7.5);
\draw[blue] (12.3,0.7) -- (12.3,3.5);
\node at (11.4, 1.6) {\tiny $S_{n-1,r_{n-1}-1}$};

\draw[blue] (0,9) -- (0,11);
\node at (0.37, 9.9) {\tiny $S_{0,0}$};
\draw[blue, dashed] (0,5) -- (0,8);
\draw[blue] (0,1) -- (0,4);
\node at (0.56, 3.4) {\tiny $S_{0,r_{0}-1}$};

\draw[blue] (16,9) -- (16,11);
\draw[blue, dashed] (16,5) -- (16,8);
\draw[blue] (16,1) -- (16,4);

\draw[blue] (0.4,0.91) -- (4.1,11.33);
\node at (1.8, 6) {\tiny $P_{1,0}^\pm$};
\draw[blue] (0.6,0.8) -- (4.1,7.53);
\node at (3.2, 5) {\tiny $P_{1,1}^+$};
\draw[blue] (0.8,0.59) --  (4.1,3.53);
\node at (2.35, 2.6) {\tiny $P_{1,r_1-1}^+$};
\draw[blue] (0.2,4.96) -- (3.8,11.49);
\node at (1.53, 8) {\tiny $P_{1,1}^-$};
\draw[blue] (0.2,8.96) -- (3.61,11.8);
\node at (1.4, 10.6) {\tiny $P_{1,r_0-1}^-$};

\draw[blue] (4.5,0.66) -- (8.1,11.33);
\node at (5.9, 6) {\tiny $P_{2,0}^{\pm}$};
\draw[blue] (4.6,0.63) -- (8.1,8.52);
\node at (6.9, 5) {\tiny $P_{2,1}^{+}$};
\draw[blue] (4.7,0.59) --  (8.1,5.52);
\node at (6.8, 3) {\tiny $P_{2,2}^{+}$};
\draw[blue] (4.8,0.51) -- (8.1,2.52);
\node at (6.8,1.2) {\tiny $P_{2,{r_2 -1}}^{+}$};

\draw[blue] (4.5,4.46) -- (7.8,11.49);
\node at (5.9,8.2) {\tiny $P_{2,1}^{-}$};

\draw[blue] (4.5,8.46) -- (7.61,11.8);
\node at (5.9,10.7) {\tiny $P_{2,{r_1 -1}}^{-}$};

\draw[blue] (12.5,0.66) -- (15.8,11.01);
\node at (13.8, 6) {\tiny $P_{n,0}^{\pm}$};

\draw[blue] (12.6,0.63) -- (15.8,8.02);
\node at (15.1, 5.5) {\tiny $P_{n,1}^{+}$};
\draw[blue] (12.7,0.59) --  (15.8,4.02);
\node at (15.1, 2.3) {\tiny $P_{n,{r_n-1}}^{+}$};

\draw[blue] (12.5,4.46) -- (15.5,11.15);
\node at (13.7,8) {\tiny $P_{n,1}^{-}$};
\draw[blue] (12.5,8.46) -- (15.15,11.5);
\node at (13.9,11) {\tiny $P_{n,r_{n-1}-1}^{-}$};

\node[vertex] at (4.8,11.49) {};
\node[vertex] at (8.8,11.49) {};
\node[vertex] at (12.8,11.49) {} ;

\node[vertex] at (3.8,0.51) {};
\node[vertex] at (7.8,0.51) {};
\node[vertex] at (11.8,0.51) {} ;
\node[vertex] at (15.15,0.51) {} ;

\node[vertex] at (3.8,4) {};
\node[vertex] at (4.8,4) {};
\node[vertex] at (3.8,8) {};
\node[vertex] at (4.8,8) {};

\node[vertex] at (7.8,3) {};
\node[vertex] at (8.8,3) {};
\node[vertex] at (7.8,6) {};
\node[vertex] at (8.8,6) {};
\node[vertex] at (7.8,9) {};
\node[vertex] at (8.8,9) {};

\node[vertex] at (11.8,4) {};
\node[vertex] at (12.8,4) {};
\node[vertex] at (11.8,8) {};
\node[vertex] at (12.8,8) {};

\end{tikzpicture}
    \caption{Generating objects in $\mathcal{W}(1;2,2,\ldots,2)$. Top-bottom and left-right are identified. }
\label{fig8}
\end{figure}

\begin{figure}[!htbp]
\begin{equation}
\begin{tikzpicture}[baseline=-0.8ex]
\matrix (m) [matrix of math nodes,row sep=1.6em,column sep=2em,minimum width=2em]
    {&P^-_{1,0} & P^-_{1,1}&\cdots&P^-_{1,r_0-1}&P^-_{1,r_0}\\
     &P^+_{1,0} & P^+_{1,1}&\cdots&P^+_{1,r_1-1}&P^+_{1,r_1}\\
     &S_{1,0}   & S_{1,1}  &\cdots&S_{1,r_1-1}  &           \\
    P^-_{2,r_1}&P^-_{2,r_1-1}&P^-_{2,r_1-2}&\cdots&P^-_{2,0}& \\
    P^+_{2,r_2}&P^+_{2,r_2-1}&P^+_{2,r_2-2}&\cdots&P^+_{2,0}& \\
    &S_{2,r_2 -1} &S_{2,r_2 -2} & \cdots & S_{2,0}& \\
     & \cdots & \cdots & \cdots   & \cdots & \\
     & \cdots & \cdots & \cdots   & \cdots & \\
    &S_{n-1,r_{n-1}-1} & S_{n-1,r_{n-1}-2} & \cdots &S_{n-1,0}& \\
    & P^-_{n,0} & P^-_{n,1} &\cdots&P^-_{n,r_{n-1}-1}&P^-_{n, r_{n-1}}\\
    & P^+_{n,0} & P^+_{n,1} &\cdots&P^+_{n,r_{n}-1}&P^+_{n,r_{n}} \\
     &S_{0,0} &  S_{0,1} &  \cdots & S_{0,r_0-1}& \\
    P^-_{1,{r_0}}& P^-_{1,r_0-1}&P^-_{1,r_0-2}&\cdots&P^-_{1,0}& \\};
     
     \path[-stealth]
       (m-1-2) edge node [left] {\scriptsize $=$} (m-2-2)
               edge node [above] {\scriptsize $x_{1,0}$} (m-1-3)
       (m-1-3) edge node [above] {\scriptsize $x_{1,1}$} (m-1-4)
       (m-1-4) edge node [above] {} (m-1-5)
       (m-1-5) edge node [above] {\scriptsize $x_{1,r_0-1}$} (m-1-6)
       
       (m-1-6) edge node [left] {\scriptsize $=$} (m-2-6)

       (m-2-2) edge node [above] {\scriptsize $y_{1,0}$} (m-2-3)
       (m-2-3) edge node [above] {\scriptsize $y_{1,1}$} (m-2-4)
       (m-2-4) edge node [above] {} (m-2-5)
       (m-2-5) edge node [above] {\scriptsize $y_{1,r_1-1}$} (m-2-6)

       (m-3-2) edge node [left] {\scriptsize $a_{1,0}$} (m-2-2)
       (m-3-3) edge node [left] {\scriptsize $a_{1,1}$} (m-2-3)
       (m-3-5) edge node [left] {\scriptsize $a_{1,r_1-1}$} (m-2-5)

       (m-3-2) edge node [left] {\scriptsize $b_{1,0}$} (m-4-2)
       (m-3-3) edge node [left] {\scriptsize $b_{1,1}$} (m-4-3)
       (m-3-5) edge node [left] {\scriptsize $b_{1,r_1-1}$} (m-4-5)

       (m-4-1) edge node [left] {\scriptsize $=$} (m-5-1)
           
       (m-4-2) edge node [above] {\scriptsize $x_{2,r_1-1}$} (m-4-1)
       (m-4-3) edge node [above] {\scriptsize $x_{2,r_1-2}$} (m-4-2)
       (m-4-4) edge node [above] {} (m-4-3)
       (m-4-5) edge node [above] {\scriptsize $x_{2,0}$} (m-4-4)
       
       (m-4-5) edge node [left] {\scriptsize $=$} (m-5-5)

       (m-5-2) edge node [above] {\scriptsize $y_{2,r_2-1}$} (m-5-1)
       (m-5-3) edge node [above] {\scriptsize $y_{2,r_2-2}$} (m-5-2)
       (m-5-4) edge node [above] {} (m-5-3)
       (m-5-5) edge node [above] {\scriptsize $y_{2,0}$} (m-5-4)

       (m-6-2) edge node [left] {\scriptsize $a_{2,r_2-1}$} (m-5-2)
       (m-6-3) edge node [left] {\scriptsize $a_{2,r_2-2}$} (m-5-3)
       (m-6-5) edge node [left] {\scriptsize $a_{2,0}$} (m-5-5)

       (m-6-2) edge node [left] {\scriptsize $b_{2,r_2-1}$} (m-7-2)
       (m-6-3) edge node [left] {\scriptsize $b_{2,r_2-2}$} (m-7-3)
       (m-6-5) edge node [left] {\scriptsize $b_{2,0}$} (m-7-5)

       (m-9-2) edge node [left] {\scriptsize $a_{n-1,r_{n-1}-1}$} (m-8-2)
       (m-9-3) edge node [left] {\scriptsize $a_{n-1,r_{n-1}-2}$} (m-8-3)
       (m-9-5) edge node [left] {\scriptsize $a_{n-1,0}$} (m-8-5)

       (m-9-2) edge node [left] {\scriptsize $b_{n-1,r_{n-1}-1}$} (m-10-2)
       (m-9-3) edge node [left] {\scriptsize $b_{n-1,r_{n-1}-2}$} (m-10-3)
       (m-9-5) edge node [left] {\scriptsize $b_{n-1,0}$} (m-10-5)

       (m-10-2) edge node [left] {\scriptsize $=$} (m-11-2)
              edge node [above] {\scriptsize $x_{n,0}$} (m-10-3)
       (m-10-3) edge node [above] {\scriptsize $x_{n,1}$} (m-10-4)
       (m-10-4) edge node [above] {} (m-10-5)
       (m-10-5) edge node [above] {\scriptsize $x_{n,r_n-1}$} (m-10-6)
       
       (m-10-6) edge node [left] {\scriptsize $=$} (m-11-6)

       (m-11-2) edge node [above] {\scriptsize $y_{n,0}$} (m-11-3)
       (m-11-3) edge node [above] {\scriptsize $y_{n,1}$} (m-11-4)
       (m-11-4) edge node [above] {} (m-11-5)
       (m-11-5) edge node [above] {\scriptsize $y_{n,r_n-1}$} (m-11-6)

       (m-12-2) edge node [left] {\scriptsize $a_{n,0}$} (m-11-2)
       (m-12-3) edge node [left] {\scriptsize $a_{n,1}$} (m-11-3)
       (m-12-5) edge node [left] {\scriptsize $a_{n,r_n-1}$} (m-11-5)

       (m-12-2) edge node [left] {\scriptsize $b_{n,0}$} (m-13-2)
       (m-12-3) edge node [left] {\scriptsize $b_{n,1}$} (m-13-3)
       (m-12-5) edge node [left] {\scriptsize $b_{n,r_n-1}$} (m-13-5)

       (m-13-2) edge node [above] {\scriptsize $x_{1,r_0-1}$} (m-13-1)
       (m-13-3) edge node [above] {\scriptsize $x_{1,r_0-2}$} (m-13-2)
       (m-13-4) edge node [above] {} (m-13-3)
       (m-13-5) edge node [above] {\scriptsize $x_{1,0}$} (m-13-4);

\end{tikzpicture}
\end{equation}
 \caption{Quiver describing $\mathcal{W}(1;2,2,\ldots,2)$ where the top and bottom rows
    should be identified according to the given labels.} 
    \label{fig9} 
\end{figure}

\noindent
{\it Proof of Theorem A: case $r_i\ge 1$ for all $i$}. The required equivalences are 
established by matching the exceptional collections and their endomorphism 
algebras: see Theorem \ref{B-side-thm} and Sections \ref{W0-sec} and \ref{W1-sec}.
Specifically, in Theorem \ref{B-side-thm} we set $j_i=0$ and $m_i=-1$ for all $i$.
Let us assume first that $\CC=C(r_0,\ldots,r_n;k_1,\ldots,k_{n-1})$.
We use the correspondence
\begin{align}
&P^-_{i,j} \longleftrightarrow \PP_i(j,-1),\\
&P^+_{i,m} \longleftrightarrow \PP_i(0,m-1),\\
&S_{i,j} \longleftrightarrow \SS_{q_i}\{-k_ij\}[-1],
\end{align}
to identify the endomorphism algebra of the exceptional collection of Theorem \ref{B-side-thm}
with the one for the marked surface constructed in Section \ref{W0-sec} using the permutations
$$\si_i(x)=-k_i\cdot x \mod r_i$$
of $\Z/r_i$.
We have
$$[\si_i,\tau_i](x)=x+k_i+1 \mod r_i,$$
which means that the cycle decomposition of $[\si_i,\tau_i]$ has $p_i=\gcd(k_i+1,r_i)$ cycles of length $d_i=r_i/p_i$.
It remains to use the formula for the genus from Proposition \ref{topology}.

The case $\CC=C(r_1,\ldots,r_n;k_1,\ldots,k_n)$ is considered similarly using the results of Section \ref{W1-sec}.
\ed

\begin{rmk} If we use other $(j_i,m_i)$ in Theorem \ref{B-side-thm} we get a homeomorphic surface. This follows from the fact
that the commutator $[\si,\tau]$ does not change if we replace $\si$ by $\si\tau^m$.
\end{rmk}

We will finish the proof of Theorem A in the case when either $r_0=0$ or $r_n=0$ in Section \ref{local-B-sec} after Proposition
\ref{loc-corr-prop}.

\section{Localization}
\label{local} 
\subsection{Localization on the A-side}\label{local-A-sec}

In \cite[Sec. 3.5]{HKK} it was proved that removing a marked point on a boundary component corresponds to
localization of the partially wrapped Fukaya category given by taking the quotient (in the derived
sense, cf. \cite{drinfeld}) by the subcategory generated by objects supported near the boundary marked point. The
latter subcategory is generated by a single object in this dimension and this object is
exceptional if and only if there is another marked point on the same boundary component.

In Section \ref{Fuk-sec} we computed some categories $\WW (g;m_1,\ldots,m_d)$ in terms of generating 
exceptional collections, starting from either a linear data $(r_0,r_1,\ldots, r_n)$ with $r_i \geq 1$ and
$(\sigma_{1},\ldots,\sigma_{n-1}) \in \mathfrak{S}_{r_1} \times \ldots \times
\mathfrak{S}_{r_{n-1}}$ or a circular
data $(r_1,\ldots, r_n)$ with $r_i \geq 1$ and $(\sigma_1,\ldots, \sigma_{n}) \in
\mathfrak{S}_{r_{1}} \times \ldots \times \mathfrak{S}_{r_{n}}$. 

We can now use localization to compute $\mathcal{W}(g;m'_1,\ldots,m'_d)$ for any $0 \leq m'_i \leq m_i$. To do this, we will identify the objects supported near each marked point in terms of our
generators. This is easily done by using the determination of $\mathcal{W}(0;m)$ given in Section
\ref{diskan} and the cosheaf property of wrapped Fukaya categories proved in \cite[Sec. 3.6]{HKK}. 

In the cases at hand, the cosheaf property gives functors from $\mathcal{W}(0;3)$, resp.
$\mathcal{W}(0;4)$, to the categories $\mathcal{W}(g;m_1,\ldots, m_d)$ corresponding to triangular
and rectangular, regions depicted in
Figure \ref{fig10} illustrating the case where $\sigma_i= \id$. The case of non-trivial $\sigma_i$ is
similarly covered with triangular and rectangular regions. Thus, using the twisted complex from Eq. \ref{twcx}, we can
identify the objects supported near each marked point in terms of our generators.  

In the case of linear data the $r_0$ and $r_n$ marked points on the 
distinguished boundary components give objects $E^-_{1,j}, j=0,\ldots, r_0-1$ and $E^+_{n,j},
j=0,\ldots, r_n-1$ supported near them. Using the functors from $\mathcal{W}(0;3)$, we conclude that
these are given by the complexes:
\begin{align}\label{E1j-eq}
    E_{1,j}^-\  &:\ P_{1,j}^-[2] \to P_{1,j+1}^-[1] \\ 
    E_{n,j}^+\  &:\ P_{n,j}^+[2] \to P_{n,j+1}^+[1]  
\end{align}
All other boundary points give objects $E_{i,j}^+$ and $E_{i+1,j}^-$ for $i=1,\ldots, n-1$ and $j=
0,\ldots, r_i-1$. Using the functors from $\mathcal{W}(0;4)$, these can be expressed as iterated cones:
\begin{align}
    E_{i,j}^{-}\ &:\  S_{i-1,\sigma_{i-1}^{-1}(r_{i-1}-j-1)}[3] \to P_{i,j}^-[2] \to P_{i,j+1}^-[1] \\
    E_{i,j}^{+}\ &:\  S_{i,j}[3] \to P_{i,j}^+[2] \to P_{i,j+1}^+[1] 
\end{align}

In the case of circular data we have a similar situation. The objects
supported near the marked points are labeled by $E^{\pm}_{i,j}$ for $i=1,\ldots,n$ and $j= 0\ldots,
r_{i}-1$, where $i$ is considered as an element in $\mathbb{Z}/n$. There are only functors from
$\mathcal{W}(0;4)$ and these give iterated cones as before:
\begin{align}
    E_{i,j}^{-}\ &:\  S_{i-1,\sigma_{i-1}^{-1}(r_{i-1}-j-1)}[3] \to P_{i,j}^-[2] \to P_{i,j+1}^-[1] \\
    \label{Eij+eq}
    E_{i,j}^{+}\ &:\  S_{i,j}[3] \to P_{i,j}^+[2] \to P_{i,j+1}^+[1] 
\end{align}

\begin{figure}[!htbp]
\centering
\begin{tikzpicture}[thick,scale=0.8, every node/.style={transform shape}]

\tikzset{vertex/.style = {style=circle,draw, fill,  minimum size = 2pt,inner        sep=1.5pt}}
    \tikzset{->-/.style={decoration={ markings,
        mark=at position #1 with {\arrow{>}}},postaction={decorate}}} 

\draw [blue, thick=1.5] (0, 7.5) arc (90:-90:0.5);
\node at (0.6, 7.5) {\tiny $E_{1,i}^-$};

\draw  [->-=.4, thick=1.5]((5,12) arc (0:-180:0.7);

\draw [thick=1.5] (0,8.5) -- (0,4.5);
\draw[blue] (0,4.46) -- (3.8,11.49);
\node at (1.55, 8) {\tiny $P_{1,i}^-$};
\draw[blue] (0,8.46) -- (3.61,11.8);
\node at (1.35, 10.3) {\tiny $P_{1,i+1}^-$};
\node[vertex] at  (0,7) {};
\node[vertex] at (4.8,11.49) {};

\begin{scope}[xshift=1cm]
    
\draw  [->-=.4, thick=1.5]((9,12) arc (0:-180:0.7);
\draw  [->-=.65, thick=1.5]((4.8,4) arc (360:0:0.5);
\draw  [->-=.65, thick=1.5]((4.8,8) arc (360:0:0.5);
\draw  [blue, thick=1.5]((4.7,8.3) arc (100:-100:0.3);
\node at (5.4, 8.3) {\tiny $E_{i,j}^-$};

\draw[blue] (4.3,4.5) -- (4.3,7.5);
    \node at (2.9, 6) {\tiny $S_{i-1,\sigma_{i-1}^{-1}(r_{i-1}-j-1)}$};

\draw[blue] (4.5,4.46) -- (7.8,11.49);
\node at (6.7,8.2) {\tiny $P_{i,j}^{-}$};

\draw[blue] (4.5,8.46) -- (7.61,11.8);
\node at (5.9,10.7) {\tiny $P_{i,j+1}^{-}$};

\node[vertex] at (8.8,11.49) {};

\node[vertex] at (3.8,4) {};
\node[vertex] at (4.8,4) {};
\node[vertex] at (3.8,8) {};
\node[vertex] at (4.8,8) {};

\end{scope}

\begin{scope}[xshift=4cm, yshift=2.5cm]

\draw  [thick=1.5]((5,0) arc (0:180:0.7);
\draw  [->-=.65,  thick=1.5]((8.8,6) arc (360:0:0.5);
\draw  [->-=.65, thick=1.5]((8.8,9) arc (360:0:0.5);
\draw  [blue, thick=1.5]((7.9,6.3) arc (80:280:0.3);
\node at (7.4, 5.6) {\tiny $E_{i,j}^+$};

\draw[blue] (8.3,6.5) -- (8.3,8.5);
\node at (8.6, 7.5) {\tiny $S_{i,j}$};
\draw[blue] (4.6,0.63) -- (8.1,8.52);
\node at (6.1, 5) {\tiny $P_{i,j}^{+}$};
\draw[blue] (4.7,0.59) --  (8.1,5.52);
\node at (7, 3) {\tiny $P_{i,j+1}^{+}$};
\node[vertex] at (3.8,0.51) {};
\node[vertex] at (7.8,6) {};
\node[vertex] at (8.8,6) {};
\node[vertex] at (7.8,9) {};
\node[vertex] at (8.8,9) {};

\end{scope}

\begin{scope}[xshift=1cm, yshift=2.5cm]

\draw  [thick=1.5]((13,0) arc (0:180:0.7);
\draw [thick=1.5] (16,5.52) -- (16,8.52);
\draw[blue] (12.6,0.63) -- (16,8.52);
\node at (14.2, 5.5) {\tiny $P_{n,i}^{+}$};
\draw[blue] (12.7,0.59) --  (16,5.52);
\node at (15.6, 4) {\tiny $P_{n,i+1}^{+}$};
\node[vertex] at (16,7.02) {};
\draw [blue, thick=1.5] (16, 7.32) arc (90:270:0.3);
\node at (15.6, 6.5) {\tiny $E_{n,i}^+$};

\node[vertex] at (11.8,0.51) {} ;

\end{scope}

\end{tikzpicture}
    \caption{Functors from $\mathcal{W}(0;3)$ and $\mathcal{W}(0;4)$ corresponding to embeddings of
    disks}

    \label{fig10}
\end{figure}

\subsection{Localization on the $B$-side}\label{local-B-sec}

For each node and (in the case of balloon chain) each smooth stacky point on $\bC$ we consider
some simple $\cA_{\bC}$-modules which turn out to be exceptional objects in the derived category.

Namely, for $i=1,\ldots,n$ and integer $j$, we have $\cA_{\bC}$-modules
$$\wt{\SS}^\pm_{i}(j)=
\left(\begin{matrix} \pi_{i*}\OO(jq_{i,\pm})|_{q_{i,\pm}}\\ \pi_{i*}\OO(jq_{i,\pm})|_{q_{i,\pm}} \end{matrix}\right),$$
which fit into exact sequences
\begin{equation}\label{tilde-S-def-eq}
\begin{split}
0\to \PP_i(j-1,m)\to \PP_i(j,m)\to \wt{\SS}^-_i(j)\to 0 \\
0\to \PP_i(j,m-1)\to \PP_i(j,m)\to \wt{\SS}^+_i(m)\to 0.
\end{split}
\end{equation}
Note that $\wt{\SS}^\pm_i(j)$ is supported at the point $\pi_i(q_{i,\pm})$. In the case when this point is not a node we set
$\EE^\pm_i(j)=\wt{\SS}^\pm_i(j)$. 

If $\pi_i(q_{i,\pm})$ is a node then we observe that there are natural inclusions 
$\SS_{q_i}\{-k_im\}\hra \wt{\SS}^+_{i}(m)$ and $\SS_{q_{i-1}}\{-j\}\hra \wt{\SS}^-_{i}(j)$.
Now we define the simple
$\cA_{\bC}$-module $\EE^\pm_i(j)$ as the corresponding quotient. Thus, we have exact sequences
$$0\to \SS_{q_i}\{-k_im\}\to \wt{\SS}^+_{i}(m)\to \EE^+_{i}(m)\to 0,$$
$$0\to \SS_{q_{i-1}}\{-j\}\to \wt{\SS}^-_{i}(j)\to \EE^-_{i}(j)\to 0.$$

We claim that $\EE^\pm_{i}(j)$ is an exceptional 
object precisely when this point is either a node or has a nontrivial stacky structure.

\begin{lem} Unless $\pi_i(q_{i,\pm})$ is a smooth point with trivial stacky structure,
the object $\EE^\pm_{i}(j)$ is exceptional.
\end{lem}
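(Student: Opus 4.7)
The plan is to reduce the question to a local computation at $p := \pi_i(q_{i,\pm})$. Since $\EE^\pm_i(j)$ is supported at $p$, the groups $\Ext^*_{\cA_\bC}(\EE^\pm_i(j), \EE^\pm_i(j))$ depend only on the completion of $\cA_\bC$ at $p$. By assumption, $\bC$ admits there a presentation as a quotient stack $[U/\mu_r]$ with $r = |\Aut(p)|$, where $U$ is either a smooth affine line (if $p$ is a smooth stacky point, necessarily with $r > 1$) or a neighbourhood of the origin in $\{xy = 0\}$ (if $p$ is a node). Coherent $\cA_\bC$-modules supported near $p$ correspond to $\mu_r$-equivariant $\cA_U$-modules, and since $\mu_r$ is reductive,
$$\Ext^i_{\cA_\bC}(\EE,\EE) \simeq \Ext^i_{\cA_U}(\EE,\EE)^{\mu_r}.$$
It thus suffices to verify, in each local model, that these Ext groups are $\k$ in degree $0$ and vanish in positive degrees.

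For a smooth stacky point with $r > 1$, the local Auslander order $\cA_U$ is Morita equivalent to $\OO_U$ (since $\II = \OO_U$ and $\wt{\OO} = \OO$ locally there), and under this equivalence $\EE^\pm_i(j) = \wt{\SS}^\pm_i(j)$ becomes the skyscraper at the origin of $U$ carrying a definite character $\chi$ of $\mu_r$. The length-one Koszul resolution
$$0 \to \OO_U \otimes (\chi\cdot\psi) \xrightarrow{\cdot x} \OO_U \otimes \chi \to \EE \to 0,$$
where $\psi$ is the (necessarily nontrivial) character by which $\mu_r$ acts on the cotangent line at the origin, gives $\Ext^1(\EE,\EE) = \Hom_{\mu_r}(\psi, \mathrm{triv}) = 0$, while $\Ext^{\ge 2}$ vanishes because $U$ is smooth of dimension one. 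For a node, with local model $U = \{xy = 0\}$, I would build an explicit projective resolution of $\EE^\pm$ by the indecomposable projectives of $\cA_U$, assembling it from the defining sequences \eqref{tilde-S-def-eq} together with the extension presenting $\SS_q$ as a quotient of the local $\FF_U$-type modules already used in the proof of Theorem \ref{B-side-thm}. The resulting computation of $\Hom(\PP_{\bullet}, \EE^\pm)$ is essentially the one carried out in the non-stacky case in \cite[Sec.\ 5]{BD} and yields $\Ext^0 = \k$, $\Ext^{>0} = 0$; taking $\mu_r$-invariants preserves this vanishing, and gives $\Ext^0 = \k$ since the chosen equivariant structure makes $\EE^\pm_i(j)$ indecomposable.

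I expect the main obstacle to be the nodal case: $\cA_U$ has global dimension $2$ there, so a priori $\Ext^2(\EE^\pm, \EE^\pm)$ could be nonzero, and ruling this out requires genuinely tracking the explicit two-term resolution (rather than relying on a dimension count). The smooth stacky case, by contrast, is a one-line equivariant Koszul calculation, and the passage from local to global Ext is harmless since $\mu_r$ is reductive.
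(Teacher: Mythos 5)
Your proposal is correct and takes essentially the same route as the paper: reduce to the formal neighborhood, handle the smooth stacky case by the length-one locally projective resolution \eqref{tilde-S-def-eq} (your equivariant Koszul computation), and handle the node case by viewing $\EE^{\pm}$ as $\mu_r$-equivariant simple modules over the completed Auslander order of $xy=0$, where the explicit length-two projective resolution $0\to\PP^{\mp}\to\PP_0\to\PP^{\pm}\to\SS^{\pm}\to 0$ yields $\Ext^0=\k$ and $\Ext^{>0}=0$ before and after taking invariants. The only difference is cosmetic: the paper quotes this resolution directly from the quiver presentation of $\hat{\cA}$ in \cite[Rem.\ 2.7]{BD}, which makes the vanishing of $\Ext^1$ and $\Ext^2$ immediate, rather than splicing it from \eqref{tilde-S-def-eq} and the resolution of $\SS_q$ as you propose.
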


\Pf . To calculate morphisms involving $\EE^\pm_{i}(j)$ we can restrict to a formal neighborhood of the point
$\pi_i(q_{i,\pm})$. Also, tensoring with a line bundle of the form $\MM\{\ba\}$ we reduce to the case $j=0$.
Assume first that this point is a node $q_i$, so that a neighborhood of $q_i$ is isomorphic
to the stack quotient of $xy=0$ by $\mu_r$, where $r=r_i$.

Consider first the case when $r=1$. Then the completion $\hat{\cA}$ of $\cA$ at $q_i$ 
can be identified with the completion of the
path algebra of the following quiver with relations:
\begin{equation}\label{compl-quiver}
\xymatrix
{
\stackrel{-}\circ \ar@/^/[rr]^{u_{-}}  & &  \stackrel{0}\circ \ar@/^/[ll]^{v_{-}}
 \ar@/_/[rr]_{v_{+}}
 & &
\ar@/_/[ll]_{u_{+}} \stackrel{+}\circ}  \qquad v_{+} u_{-} = 0, \quad  v_{-} u_{+} = 0
\end{equation}
(see \cite[Rem.\ 2.7]{BD}). Furthermore, the simple $\hat{\cA}$-module $\SS_0:=\SS_{q_i}$ 
corresponds to the middle vertex, while $\SS^+:=\EE^+_{i}$ and $\SS^-:=\EE^-_{i+1}$ correspond to two other vertices.
The projective resolutions of $\SS_\pm$ have the form (see \cite[Rem.\ 2.7]{BD})
\begin{equation}\label{S+resolution-eq}
0\to \PP^\mp\rTo{j_\mp} \PP_0\rTo{i_\pm} \PP^\pm \to \SS^\pm\to 0,
\end{equation}
where $\PP^\pm$ (resp., $\PP_0$) is the projective cover of $\SS^\pm$ (resp., $\SS_0)$.
Computing $\Ext^*(\SS^\pm,\SS^\pm)$ using these resolutions we immediately deduce that $\SS^\pm$
are exceptional.

In the case of a node $q_i$ with $r=r_i>1$ the modules $\EE^+_{i}$ and $\EE^-_{i+1}$ correspond to the simple modules
$\SS^\pm$ on the formal neighborhood of a node in $xy=0$, viewed as $\mu_r$-equivariant $\hat{\cA}$-modules,
so the above computation can still be applied. 

Finally, in the case when $\pi_i(q_{i,\pm})$ is a smooth stacky point, the fact that 
$\EE^\pm_{i}(j)=\wt{\SS}^\pm_{i}(j)$ is exceptional follows immediately from the locally projective resolution
\eqref{tilde-S-def-eq}.
\ed

\begin{prop}\label{loc-corr-prop}
Under the equivalence of Theorem A (obtained using Theorem \ref{B-side-thm} with $j_i=0$, $m_i=-1$),
the $\cA_\bC$-modules $\EE^-_{i}(j)$, for $j=1,\ldots,r_{i-1}$ (resp., $\EE^+_i(j)$ for $j=0,\ldots,r_i-1$), correspond to 
the objects $E^-_{i,j-1}[-1]$ (resp., $E^+_{i,j}[-1]$) in the wrapped Fukaya category (see Sec.\ \ref{local-A-sec}).
\end{prop}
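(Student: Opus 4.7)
My plan is to realize both sides as explicit iterated cones built from the generators of the exceptional collection of Theorem \ref{B-side-thm} and match them under the correspondence used in the proof of Theorem A.

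First, I would exhibit the B-side objects as iterated cones. Combining the short exact sequences \eqref{tilde-S-def-eq} (with $(j_i,m_i)=(0,-1)$) with the defining extension
\[
0 \to \SS_{q_i}\{-k_ij\} \to \wt{\SS}^+_i(j) \to \EE^+_i(j) \to 0
\]
presents $\EE^+_i(j)$ as
\[
\EE^+_i(j) \simeq \Cone\Bigl(\SS_{q_i}\{-k_ij\} \to \Cone\bigl(\PP_i(0,j-1) \xrightarrow{y(j-1)} \PP_i(0,j)\bigr)\Bigr),
\]
with connecting arrows equal to the generators $y(j-1)$ and $a(j)$ from Theorem \ref{B-side-thm}. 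Analogously, $\EE^-_i(j)$ is an iterated cone of $\SS_{q_{i-1}}\{-j\}$, $\PP_i(j-1,-1)$, $\PP_i(j,-1)$ via the arrows $x(j-1)$ and $b(j-1)$.

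Second, under the equivalence of Theorem A with $j_i=0,\ m_i=-1$, the generators correspond as
\[
\PP_i(0,m-1) \leftrightarrow P^+_{i,m}, \qquad \PP_i(j,-1) \leftrightarrow P^-_{i,j}, \qquad \SS_{q_i}\{-k_i\ell\}[-1] \leftrightarrow S_{i,\ell},
\]
and the B-side arrows $a,b,x,y$ are mapped to the identically named Fukaya arrows (see the proof of Theorem A). For the $\EE^-_i(j)$ case, a brief indexing calculation using the permutation $\sigma_{i-1}(x)=-k_{i-1}x \bmod r_{i-1}$ from the proof of Theorem A yields $\sigma_{i-1}^{-1}(r_{i-1}-j)\equiv j/k_{i-1} \bmod r_{i-1}$, so the correspondence sends $\SS_{q_{i-1}}\{-j\}[-1]$ to precisely the $S$-term $S_{i-1,\sigma_{i-1}^{-1}(r_{i-1}-j)}$ appearing in the description of $E^-_{i,j-1}$ in Section \ref{local-A-sec}.

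Third, after translating the B-side iterated cone through this correspondence, the resulting object on the A-side is an iterated cone on the three matched pieces via the matched arrows; accounting for the $[-1]$-shift in $S_{i,\ell}\leftrightarrow \SS_{q_i}\{-k_i\ell\}[-1]$, the translated cone coincides with $E^+_{i,j}[-1]$ (respectively $E^-_{i,j-1}[-1]$) in the twisted complex notation of \eqref{E1j-eq}--\eqref{Eij+eq}. In the balloon-chain endpoint cases where $\pi_i(q_{i,\pm})$ is a smooth (possibly stacky) point rather than a node, we have $\EE^\pm_i(j)=\wt{\SS}^\pm_i(j)$ and the iterated cone collapses to a 2-term cone, matching the simpler A-side formulas \eqref{E1j-eq}.

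The main obstacle is bookkeeping of the shifts: verifying that the $[-1]$ built into the correspondence $S \leftrightarrow \SS[-1]$ and the overall $[-1]$ stated in the proposition combine correctly with the shifts $[3],[2],[1]$ in the twisted complex definition of $E^\pm_{i,j}$, and that the indexing shift $j\mapsto j-1$ between $\EE^-_i(j)$ and $E^-_{i,j-1}$ matches the shift built into the defining sequence for $\wt{\SS}^-_i$. Once this is handled carefully, the identification follows immediately from the matching of arrows established in the second step.
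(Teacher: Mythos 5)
Your proposal is correct, but it runs along a different track than the paper's own argument. The paper proves the proposition by computing, for each $\EE^\pm_i(j)$, the right module $\Hom(\Exc,-)$ over the endomorphism algebra of the exceptional collection: a local analysis at the node (or stacky point) shows this module is concentrated in degree $0$ and is generated by a single element $d_i(j)$ (resp.\ $c_i(m)$) subject to the relations $dx=0$ (resp.\ $cy=0$, and additionally $db=0$, $ca=0$ at smooth stacky points); it then computes the analogous modules for the A-side objects $E^\pm_{i,j}$ from the twisted complexes \eqref{E1j-eq}--\eqref{Eij+eq} and matches the presentations. You instead never compute the full Yoneda module: you exhibit $\EE^\pm_i(j)$ as an iterated cone on objects of the exceptional collection, using \eqref{tilde-S-def-eq} together with the extension $0\to\SS_q\{\cdot\}\to\wt{\SS}^\pm_i\to\EE^\pm_i\to 0$, identify the connecting maps with the generators $a$ (resp.\ $b$) and $y$ (resp.\ $x$) -- which is legitimate since the relevant Hom spaces are $1$-dimensional, the composites $\SS\to\wt{\SS}^\pm\to\PP[1]$ are nonzero, and negative-degree morphisms between generators vanish so no extra twisted-complex components or higher products can occur -- and then transport the cone through the equivalence, where your indexing computation $\sigma_{i-1}^{-1}(r_{i-1}-j)\equiv k_{i-1}^{-1}j$ correctly locates the $S$-term of $E^-_{i,j-1}$, and the shift bookkeeping indeed lands on $E^\pm[-1]$. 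What each approach buys: the paper's module-theoretic computation is robust (it does not require knowing a priori that the B-side cones are built from the named arrows, and the local $\Ext$ analysis it records is reused elsewhere, e.g.\ for orthogonality statements), while your cone-transport argument is shorter and more transparent, at the cost of the (easy, but necessary) verifications above that the twisted-complex data on both sides is exactly the generators with the relations $ya=0$, $xb=0$; you also correctly note the degeneration to two-term cones at the smooth (possibly stacky) endpoints of a balloon chain, matching \eqref{E1j-eq}.
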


\Pf . Let $\Exc$ denote the direct sum of all the objects of our exceptional collection in $D^b(\cA_\bC-\mod)$.
We are going to describe the right modules $\Hom(\Exc,?)$ over the endomorphism algebra of our exceptional collection
associated with $\EE^-_{i}(j)$ (resp., $\EE^+_i(m)$). Assume first that our object is supported at a node $q=q_{i-1}$ 
(resp., $q=q_i$).
Note that as before, the computation
can be done locally near this node, so we can use $\mu_r$-equivariant modules (where $r=|\Aut(q)|$) over the completion of the Auslander order
at the node of $xy=0$, and our object is the simple object $\SS^+$ (resp., $\SS^-$) with some equivariant structure.
Thus, we get that the only nontrivial spaces of morphisms from modules
of the form $\PP_i(j',m')$ from our collection are 
\begin{itemize}
\item the $1$-dimensional space $\Hom(\PP_i(j,-1), \EE^-_i(j))$ (resp., $\Hom(\PP_i(0,m),\EE^+_i(m))$);
\item in the case $j=0$ (resp., $m=r-1$), the $1$-dimensional space 
$\Hom(\PP_i(0,m'),\EE^-_i(0))$ (resp., $\Hom(\PP_i(j',-1),\EE^+_i(r-1)$).
\end{itemize}
Also, we have a $1$-dimensional extension space
$$\Ext^1(\SS_q\{-j-1\},\EE^-_{i}(j)) \ \ 
(\text{resp., } \Ext^1(\SS_q\{-k_i(m+1)\},\EE^+_{i}(m))),$$
which comes from the locally projective resolution \eqref{Sq-ex-seq-U} of $\SS_q$ (as in the proof of Theorem \ref{B-side-thm},
we use tensoring with line bundles $\MM\{\ba\}$). Furthermore,
the generator of this $\Ext$-space is obtained as the composition of the natural morphisms
\begin{align*}
&\SS_q\{-j-1\}[-1]\rTo{b_{i-1}(j)} \PP_i(j,-1)\to \EE^-_i(j) \\ 
&(\text{resp., } \ \SS_q\{-k_i(m+1)\}[-1]\rTo{a_i(m+1)} \PP_i(0,m)\to \EE^+_i(m)).
\end{align*}
In the case $j=0$ (resp., $m=r-1$), we also have similar nonzero compositions of the $\Ext$-classes $b_{i-1}(0)$
(resp., $a_i(0)$) with the maps $\PP_i(0,m')\to \EE^-_i(0)$ (resp., $\PP_i(j',-1)\to \EE^+_i(r-1)$).

Thus, we see that the module $\Hom(\Exc,\EE^-_i(j))$ (resp., $\Hom(\Exc,\EE^+_i(m))$) is always concentrated in degree $0$.
In the case $j\neq 0$ (resp., $m\neq r-1$) it is generated by a single element
$$d_i(j)\in\Hom(\PP_i(j,-1),\EE^-_i(j)) \ \ (\text{resp., } c_i(m)\in\Hom(\PP_i(0,m),\EE^+_i(m))).$$
In the case $j=0$ (resp., $m=r-1$) the generator is
$$d_i(0)\in\Hom(\PP_i(0,r-1),\EE^-_i(0)) \ \ (\text{resp., } c_i(r-1)\in\Hom(\PP_i(r,-1),\EE^+_i(r-1))).$$
In either case the defining relation is that $dx=0$ (resp., $cy=0$) whenever the composition is possible.

In the case when our object is supported at a stacky point (which can happen when $\bC$ is a balloon chain)
there are no nonzero morphisms from objects of the form $\SS_q\{a\}$, so the module
$\Hom(\Exc,\EE^-_i(j))$ (resp., $\Hom(\Exc,\EE^+_i(m))$) is still generated by the same elements $d_i(j)$ (resp., $c_i(m)$)
as above, with the defining relations $dx=0$ and $db=0$ (resp., $cy=0$ and $ca=0$) whenever the composition
is possible.

Using the representations by complexes \eqref{E1j-eq}--\eqref{Eij+eq}
it is easy to compute the modules corresponding to the objects $E^\pm_{i,j}$ on the A-side. This gives the required matching.
\ed


\noindent
{\it Proof of Theorem A: case $r_0=0$ or $r_n=0$}.
Assume that $r_0=0$, so that $B(0,r_1)$ is the affine line with one stacky point.
In this case we can view $\CC=C(0,r_1,\ldots,r_n;k_1,\ldots,k_{n-1})$ as an open substack in
$\ov{\CC}:=C(1,r_1,\ldots,r_n;k_1,\ldots,k_{n-1})$, namely the complement to the point $q_-:=q_{1,-}\in B(1,r_1)\sub\ov{\CC}$.
Note that the object $\EE_1^-$ in this case is given by the module 
$\left(\begin{matrix} \OO_{q_-}\\ \OO_{q_-} \end{matrix}\right)$.
Since $\cA_{\ov{\CC}}$ is isomorphic near $q_-$ to the matrix algebra over $\OO$, it follows that the restriction functor
$$\cA_{\ov{\CC}}-\mod\to \cA_\CC-\mod$$
identifies $\cA_\CC-\mod$ with the quotient of $\cA_{\ov{\CC}}-\mod$ by the Serre subcategory generated by $\EE_1^-$.
Hence, by the main result of \cite{miyachi}, we have an equivalence of derived categories 
$$D^b(\cA_{\ov{\bC}}-\mod)/\lan \EE_1^-\ran\simeq D^b(\cA_{\bC}-\mod).$$
Using the behavior of the partially wrapped Fukaya categories upon deleting one marked point (see Sec.\ \ref{local-A-sec}) and Proposition \ref{loc-corr-prop}, 
we see that the equivalence of
$D^b(\cA_{\ov{\bC}}-\mod)$ with $\WW(g;1,(2d_1)^{p_1},\ldots,(2d_{n-1})^{p_{n-1}},r_n)$ implies an equivalence
of $D^b(\cA_{\bC}-\mod)$ with $\WW(g;0,(2d_1)^{p_1},\ldots,(2d_{n-1})^{p_{n-1}},r_n)$.

The case when $r_n=0$ is considered similarly.
\ed


Next, using the approach of \cite[Sec.\ 4]{BD},
we would like to prove the equivalence of the quotient category of $D^b(\cA_\bC-\mod)$ by all of the objects
$\EE^\pm_i(j)$, supported at the nodes, with $D^b\Coh(\bC)$.

Let us denote by $\TT\sub \cA_\bC-\mod$ the subcategory formed by
direct sums of all the objects $\EE^\pm_i(j)$ supported at the nodes.

\begin{prop}\label{DbCoh-prop} 
The subcategory $\TT\sub \cA_\bC-\mod$ is a Serre subcategory. The functor 
$$\cA_\bC-\mod\to\Coh\bC: M\mapsto\und{\Hom}_{\cA_\bC}(\FF_\bC,M)$$ 
is exact and identifies $\Coh\bC$ with the Serre quotient $\cA_\bC-\mod/\TT$. 
Similarly, the corresponding derived functor identifies $D^b(\Coh\bC)$ with the Verdier quotient of
$D^b(\cA_\bC-\mod)$ by the triangulated (equivalently, thick) subcategory generated by $\TT$.
\end{prop}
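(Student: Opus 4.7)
The plan is to adapt the Burban--Drozd strategy from \cite[Sec.\ 4]{BD} to the stacky setting, using the fact that $\FF_\bC$ is a direct summand of $\cA_\bC$ by \eqref{A-decomp-eq}. This immediately makes $\FF_\bC$ projective, so the functor $\und{\Hom}_{\cA_\bC}(\FF_\bC,-)$ is exact. Writing $e$ for the corresponding idempotent section of $\cA_\bC$, one has $\und{\Hom}_{\cA_\bC}(\FF_\bC,M)\simeq eM$ naturally, as a module over the sheaf of algebras $e\cA_\bC e=\und{\End}_{\cA_\bC}(\FF_\bC)^{op}$; identifying this corner algebra with $\OO_\bC$ is what will yield the equivalence.

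First I would verify that $\TT$ is Serre. All $\EE^\pm_i(j)$ at nodes are simple $\cA_\bC$-modules, so closure under sub- and quotient-objects is automatic; it remains to check that $\Ext^1$ between any two of them vanishes. This is trivial for simples with disjoint support, and at a fixed node $q_i$ the computation is local. Using the $\mu_{r_i}$-equivariant description of the local Auslander order, one reduces to computing Ext in the path algebra of the quiver \eqref{compl-quiver}, where $\EE^\pm_i(j)$ correspond (up to equivariant twist) to the simples $\SS^\pm$ at the outer vertices. A direct computation from the projective resolutions \eqref{S+resolution-eq} gives $\Ext^1(\SS^{\vareps},\SS^{\vareps'})=0$ for every pair $\vareps,\vareps'\in\{+,-\}$, establishing closure under extensions.

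Next I would prove, by local inspection, that $e\cA_\bC e\simeq \OO_\bC$ as sheaves of algebras and that $eM=0$ if and only if $M\in\TT$. Away from nodes this is immediate: $\cA_\bC$ is Morita-equivalent to $\OO_\bC$ and $\FF_\bC$ corresponds to $\OO_\bC$ itself. Near a node it follows from the quiver \eqref{compl-quiver}: the local incarnation of $\FF_\bC$ is the projective cover of the simple $\SS_{q_i}$ at the middle vertex, so in particular $\und{\Hom}(\FF_\bC,\EE^\pm_i(j))=0$; the loops at the middle vertex recover the completed local ring of $\bC$ at $q_i$ together with its $\mu_{r_i}$-action prescribed by $k_i$; and any module with vanishing middle component has composition factors only among $\EE^\pm$, hence by the preceding Ext-vanishing must lie in $\TT$. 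This identifies $M\mapsto\und{\Hom}(\FF_\bC,M)$ with the Serre quotient functor, giving an equivalence $\cA_\bC-\mod/\TT\xrightarrow{\sim}\Coh\bC$; a quasi-inverse is provided by $\GG\mapsto \FF_\bC\otimes_{\OO_\bC}\GG$.

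For the derived statement, projectivity of $\FF_\bC$ implies that $R\und{\Hom}_{\cA_\bC}(\FF_\bC,-)$ coincides with the underived functor applied termwise, and it remains to identify the Verdier quotient $D^b(\cA_\bC-\mod)/\lan\TT\ran$ with $D^b(\cA_\bC-\mod/\TT)$. This is precisely the content of Miyachi's theorem \cite{miyachi} (already invoked in Section \ref{local-B-sec}), together with the standard fact that the thick subcategory generated by $\TT$ coincides with the full subcategory of complexes whose cohomology sheaves all lie in $\TT$. The main technical subtlety I expect is tracking the $\mu_{r_i}$-equivariant structures at non-balanced stacky nodes to confirm that $e\cA_\bC e\simeq \OO_\bC$ really captures the $k_i$-twist in the local model; once this bookkeeping is handled, the remainder of the argument is routine.
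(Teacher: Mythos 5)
Your proposal is correct and follows essentially the same route as the paper: the paper's proof likewise reduces everything to the adjoint pair $\FF_\bC\ot_{\OO_\bC}-\dashv\und{\Hom}_{\cA_\bC}(\FF_\bC,-)$ coming from the corner idempotent (following \cite[Thm.\ 4.8]{BD}), checks the key assertions locally at the nodes via the $\mu_r$-equivariant quiver \eqref{compl-quiver}, and invokes \cite{miyachi} for the passage to derived categories. Your explicit $\Ext^1$-vanishing argument for the Serre property and the identification of the kernel of $M\mapsto eM$ are precisely the local verifications the paper leaves to the reader.
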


\Pf . The assertion about derived categories is a consequence of the assertion about abelian categories
(see \cite{miyachi}).
In the non-stacky case the assertion about abelian categories 
was proved in \cite[Thm.\ 4.8]{BD}. Using the identification of $\bC$ near a node
with the quotient of the non-stacky nodal curve by $\mu_r$, one can check that the same proofs goes through
in our case. Namely, as in the proof of \cite[Thm.\ 4.8]{BD}, first one constructs some adjoint functors, then reduces
the assertion to proving that some natural transformations are isomorphisms and then checks the last assertion
locally.
\ed

\section{Perfect derived categories}\label{Perf-sec}

\subsection{Perfect derived category on the B-side}

\begin{lem}\label{simple-orth-lem}
Let $A$ be the completion of a path algebra of a finite quiver $Q$ with relations.
Assume that $A$ is Noetherian and has finite cohomological dimension. For every vertex $v$ we denote
by $S_v$ (resp., $P_v$) the simple $A$-module at the vertex $v$ (resp., the projective $A$-module generated
by the idempotent in $A$ corresponding to $v$). Then for any subset $\Sigma$ of vertices of $Q$ one has
the equality of full triangulated subcategories in the bounded derived category of finitely generated $A$-modules, $D^b(A-\mod)$,
$${}^\perp\lan S_v \ |\ v\not\in\Sigma\ran=\lan \PP_v \ |\ v\in\Sigma\ran.$$
\end{lem}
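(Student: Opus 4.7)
The plan is to prove the two inclusions separately. The inclusion $\lan P_v \mid v \in \Sigma\ran \subseteq {}^\perp\lan S_w \mid w \notin \Sigma\ran$ is immediate: since $P_v$ is projective, $\Ext^i(P_v, S_w) = 0$ for $i > 0$, while $\Hom(P_v, S_w) = e_v S_w$ vanishes whenever $v \neq w$. Thus $\Hom^*(P_v, S_w) = 0$ for all $v \in \Sigma$ and $w \notin \Sigma$, and since the left orthogonal is a triangulated subcategory closed under direct summands, it contains $\lan P_v \mid v \in \Sigma\ran$.

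For the nontrivial inclusion, the plan is to use minimal projective resolutions. Because $A$ is the completion of a finite path algebra with relations over $\k$, it is basic and semiperfect, so every finitely generated module admits a projective cover; combined with the finite cohomological dimension hypothesis, every $M \in D^b(A-\mod)$ is quasi-isomorphic to a bounded complex $P^\bullet$ of finitely generated projectives whose differentials all have image in the radical. Writing $P^i \simeq \bigoplus_v P_v^{\oplus n_{i,v}}$, the fact that the differentials lie in the radical forces the induced differentials on $\Hom(P^\bullet, S_w)$ to vanish (any map $P^{i+1} \to S_w$ kills the radical since $S_w$ is simple), and together with $\Hom(P_v, S_w) = \delta_{v,w}\,\k$ this yields the multiplicity-detection formula
$$n_{i,w} = \dim_\k \Hom_{D^b(A-\mod)}(M, S_w[i]).$$

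If $M \in {}^\perp\lan S_w \mid w \notin \Sigma\ran$, then all these dimensions vanish for $w \notin \Sigma$, so only summands $P_v$ with $v \in \Sigma$ appear in $P^\bullet$. Since $P^\bullet$ is bounded, $M$ is built from such $P_v$ by finitely many iterated cones and shifts (via the stupid filtration of $P^\bullet$), placing it in $\lan P_v \mid v \in \Sigma\ran$. The main technical point I expect to be the real work is justifying the existence of a \emph{minimal} bounded projective resolution in this derived setting; this reduces to the semiperfectness of the completed path algebra $A$ together with the elementary but essential observation that $\End(S_v) = \k$ for each simple, which is what makes the Ext-detection formula for multiplicities correct.
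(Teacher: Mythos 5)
Your proposal is correct, but it organizes the hard inclusion differently from the paper. The paper argues by a double induction: for a module $M$ in the left orthogonal, the vanishing of $\Hom(M,S_w)$ for $w\notin\Sigma$ yields a surjection $P\to M$ with $P$ a finite sum of $P_v$, $v\in\Sigma$, and one descends along the kernel using the finite cohomological dimension; for a genuine complex $[M^a\to\cdots\to M^b]$ it picks a surjection $P\to H^bM$, lifts it to $M^b$, replaces the complex by a quasi-isomorphic one fitting into a short exact sequence $0\to P[-n]\to N^\bullet\to \sigma_{\le b-1}N^\bullet\to 0$, and inducts on the length of the complex. You instead invoke, once and for all, the existence of a bounded complex of finitely generated projectives with radical differentials representing $M$ (semiperfectness of the completed path algebra, Noetherianity for finite generation of syzygies, finite cohomological dimension for boundedness, plus the standard splitting of a bounded complex of finitely generated projectives over a semiperfect ring into a minimal complex and a contractible one), and then read off the multiplicities $n_{i,w}$ from $\Hom_{D^b}(M,S_w[i])$ using $\Hom(P_v,S_w)=\delta_{v,w}\k$ and the vanishing of the induced differentials on $\Hom(P^\bullet,S_w)$; orthogonality to the $S_w$, $w\notin\Sigma$, then kills all summands $P_w$ with $w\notin\Sigma$, and the stupid filtration finishes. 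Your route buys a cleaner, more structural statement (the minimal complex detects exactly which projectives occur), at the cost of importing the minimal-complex/Krull--Schmidt machinery for semiperfect rings, which you correctly identify as the real technical input and which does hold here since each $e_vAe_v$ is local and $\End(S_v)=\k$; the paper's dévissage is more elementary and self-contained, essentially building the minimal resolution step by step in the module case and avoiding minimality altogether in the complex case. I see no gap in your argument (modulo the harmless indexing convention in $n_{i,w}=\dim\Hom(M,S_w[i])$), and the easy inclusion is handled exactly as in the paper.
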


\Pf .
Clearly we have $\Ext^*(\PP_v,\SS_w)=0$ for $v\in\Sigma$ and $w\not\in\Sigma$.
Conversely, let $M$ be a bounded complex in the left orthogonal of $\lan S_v \ |\ v\not\in\Sigma \ran$.
We will prove that $M$ is in $\lan \PP_v \ |\ v\in\Sigma \ran$ by induction on the length of $M$.
For the base of induction, let us assume that $M$ is an object of the abelian subcategory $A-\mod$.
Then the fact that $\Hom(M,\SS_v)=0$ for all $v\not\in\Sigma$
implies the existence of a surjection $P\to M$ with $P$ a direct sum
of finitely many $\PP_v$ with $v\in\Sigma$.
Let us consider an exact sequence 
$$0\to M'\to P\to M\to 0.$$
Then $M'$ is still in the left orthogonal $\lan S_v \ |\ v\not\in\Sigma\ran$ and has smaller projective dimension
than $M$. So, continuing in this way we deduce that $M\in\lan \PP_v \ |\ v\in\Sigma\ran$.
Now for the step of induction, assume that $M^\bullet$ is a complex $[M^a\to\ldots\to M^{b-1}\to M^b]$.
It is easy to see that the condition $\Ext^*(M,S)=0$ implies that
$\Hom(H^bM,S)=0$. Thus, there exists a surjection $P\to H^bM$, with $P$ a finite direct sum of $\PP_v$ with $v\in\Sigma$.
Let us lift it to a map $f:P\to M^b$ and extend $f$ to the chain map of complexes
of $A$-modules
\begin{center}
\begin{tikzpicture} 
\matrix (m) [matrix of math nodes,row sep=3em,column sep=2.8em,minimum width=2em,nodes={text
    height=1.75ex,text depth=0.25ex} ]
        {M^a & \cdots & M^{b-2}&M^{b-1}&M^b\\
        N^a & \cdots \ &N^{b-2}&N^{b-1}&N^b \\};
 \path[-stealth]
    (m-2-1) edge node [left] {\tiny $\id$} (m-1-1)
    (m-2-3) edge node [left] {\tiny $\id$} (m-1-3)
    (m-2-4) edge node [left] {} (m-1-4)
    (m-2-5) edge node [left] {\tiny $f$} (m-1-5)

    (m-1-1)  edge node [above] {} (m-1-2)
    (m-1-2) edge node [above] {} (m-1-3)
    (m-1-3) edge node [above] {} (m-1-4)
    (m-1-4) edge node [above] {} (m-1-5)

    (m-2-1) edge node [above] {} (m-2-2)
    (m-2-2) edge node [above] {} (m-2-3)
    (m-2-3) edge node [above] {} (m-2-4)
    (m-2-4) edge node [above] {} (m-2-5)

        (m-2-5) node {\qquad\qquad$=P$};
\end{tikzpicture}
\end{center} 
where the rightmost square is cartesian and $N^i=M^i$ for $i\le b-2$.
It is easy to see that the chain map $N^\bullet\to M^\bullet$ is a quasi-isomorphism. 
We have an exact sequence of complexes
$$0\to P[-n]\to N^\bullet\to \sigma_{\le b-1}N^\bullet\to 0,$$
where $\sigma_{\le b-1}N^\bullet=[N^a\to\ldots\to N^{b-1}]$ is a complex of length one less
than $M^\bullet$. From this exact sequence we derive that $\sigma_{\le b-1}N^\bullet$ is in
the left orthogonal of $\lan \SS^+,\SS^-\ran$. By the induction assumption, this implies that
it is in $\lan\PP_v \ |\ v\in\Sigma\ran$. Now the same exact sequence shows that $N^\bullet$ (and hence,
$M^\bullet$) is in $\lan\PP_v \ |\ v\in\Sigma\ran$.
\ed

\begin{lem}\label{compl-orthog-lem}
Let $\hat{\cA}$ be the completion of the Auslander order of the curve $xy=0$ at the node, which we
identify with the completed path algebra of the quiver \eqref{compl-quiver}. 

\noindent
(i) One has
$${}^\perp\lan \SS^-\ran=\lan \SS^+\ran^\perp, \ \ {}^\perp\lan \SS^+\ran=\lan \SS^-\ran^\perp.$$

\noindent
(ii) The following subcategories in $D^b(\hat{\cA}-\mod)$ coincide:
\begin{itemize}
\item the triangulated subcategory generated by $\PP_0$;
\item the left orthogonal of $\lan \SS^+,\SS^-\ran$;
\item the right orthogonal of $\lan \SS^+,\SS^-\ran$.
\end{itemize}
\end{lem}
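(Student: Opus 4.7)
For part (ii), I would deduce the three-way equality from part (i) together with Lemma \ref{simple-orth-lem}. Applying that lemma with $\Sigma=\{0\}$ immediately gives ${}^\perp\lan\SS^+,\SS^-\ran=\lan\PP_0\ran$. Assuming (i), the identifications $\SS^+{}^\perp={}^\perp\SS^-$ and $\SS^-{}^\perp={}^\perp\SS^+$ combine to yield
\[
\lan\SS^+,\SS^-\ran^\perp=\SS^+{}^\perp\cap\SS^-{}^\perp={}^\perp\SS^-\cap{}^\perp\SS^+={}^\perp\lan\SS^+,\SS^-\ran=\lan\PP_0\ran,
\]
which finishes (ii). The real content is therefore in part (i).

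For (i), I would first observe that the quiver \eqref{compl-quiver} admits a natural automorphism swapping $+\leftrightarrow-$ and $u_\pm\leftrightarrow u_\mp$, $v_\pm\leftrightarrow v_\mp$. Since this preserves the relations $v_+u_-=v_-u_+=0$, it induces an autoequivalence of $D^b(\hat{\cA}-\mod)$ exchanging $\SS^+\leftrightarrow\SS^-$ and $\PP^+\leftrightarrow\PP^-$, making the two identities in (i) equivalent. So it suffices to prove ${}^\perp\SS^-=\SS^+{}^\perp$. By Lemma \ref{simple-orth-lem} with $\Sigma=\{0,+\}$, the left side equals $\lan\PP_0,\PP^+\ran$, so the task is to prove $\SS^+{}^\perp=\lan\PP_0,\PP^+\ran$. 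The easy inclusion $\supseteq$ I would handle by explicit calculation of $\Hom^*(\SS^+,\PP_v)$ for $v\in\{0,+\}$ using the resolution \eqref{S+resolution-eq} of $\SS^+$: each becomes a three-term complex whose terms are subspaces $e_w\hat{\cA}e_v$ of the algebra, and the relations $v_\pm u_\mp=0$ make acyclicity a routine check.

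The main obstacle is the opposite inclusion $\SS^+{}^\perp\subseteq\lan\PP_0,\PP^+\ran$. My plan is a minimal projective resolution argument leveraging the parallel computation $\Hom^*(\SS^+,\PP^-)\cong\k[-2]$ (concentrated in cohomological degree $2$). Given $M\in\SS^+{}^\perp$, take a minimal projective resolution $P^\bullet\to M$ with $P^i=\PP_0^{a_i}\oplus(\PP^+)^{b_i}\oplus(\PP^-)^{c_i}$. Since $\Hom^*(\SS^+,\PP_0)=\Hom^*(\SS^+,\PP^+)=0$, the double complex $\Hom^*(\SS^+,P^\bullet)$ only receives contributions from the $\PP^-$-summands, and its horizontal differentials come from the $(\PP^-)^{c_i}\to(\PP^-)^{c_{i+1}}$ blocks of $d_{P^\bullet}$. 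By minimality, each such block has entries in the maximal ideal of $e_-\hat{\cA}e_-\simeq\k[[v_-u_-]]$, and such elements act by $0$ on $\Ext^2(\SS^+,\PP^-)\cong\k[[v_-u_-]]/(v_-u_-)\cong\k$. The induced differentials on the $\k^{c_i}$'s therefore vanish, and $\Hom^*(\SS^+,M)\cong\bigoplus_i\k^{c_i}[-i-2]$, so the hypothesis $M\in\SS^+{}^\perp$ forces $c_i=0$ for all $i$, i.e.\ $M\in\lan\PP_0,\PP^+\ran$. The care needed to justify minimality on the infinite formal power-series coefficients is the main technical point; as a cleaner alternative one could invoke a Serre-type duality for the Gorenstein order $\hat{\cA}$ with $\nu(\SS^+)\simeq\SS^-$ up to shift, reducing the desired equality to the functorial isomorphism $\Hom^*(\SS^+,M)\simeq\Hom^*(M,\SS^-)^*$.
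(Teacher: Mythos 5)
Your proposal is correct, and its skeleton matches the paper's: for (i) you invoke the $+\leftrightarrow-$ symmetry of the quiver, identify ${}^\perp\lan\SS^-\ran=\lan\PP_0,\PP^+\ran$ via Lemma \ref{simple-orth-lem}, and verify $\Ext^*(\SS^+,\PP_0)=\Ext^*(\SS^+,\PP^+)=0$ and $\Hom^*(\SS^+,\PP^-)\simeq\k[-2]$ from the resolution \eqref{S+resolution-eq}; for (ii) both you and the paper combine (i) with Lemma \ref{simple-orth-lem} (the paper explicitly notes the Noetherian and finite cohomological dimension hypotheses from \cite[Sec.\ 2]{BD}, which you should also cite since your minimal-resolution step uses them too). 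The genuine divergence is in the hard inclusion $\lan\SS^+\ran^\perp\subseteq\lan\PP_0,\PP^+\ran$. The paper uses the left adjoint $\la$ to the inclusion of $\lan\SS^+\ran^\perp$ coming from the admissibility of the exceptional subcategory $\lan\SS^+\ran$: since the projectives generate $D^b(\hat\cA-\mod)$, it suffices to compute $\la$ on them, and $\la(\PP^-)=\Cone(\SS^+[-2]\to\PP^-)\simeq[\PP_0\to\PP^+]$ drops out immediately from \eqref{S+resolution-eq}. Your route instead takes an arbitrary $M$ in the orthogonal, chooses a minimal bounded complex of finitely generated projectives representing it, and runs the hyper-Ext spectral sequence: only $\PP^-$-summands contribute (in degree $2$), minimality forces the $d_1$-differentials to come from radical endomorphisms of $\PP^-$, which kill the one-dimensional $\Ext^2(\SS^+,\PP^-)$, so $\Hom^*(\SS^+,M)\neq0$ unless no $\PP^-$ occurs. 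This is sound, but it is heavier: you must justify existence and uniqueness of minimal bounded projective complexes over the complete semiperfect Noetherian algebra $\hat\cA$ (Gaussian elimination of unit entries, Krull--Schmidt for finitely generated projectives) and the identification of the $\End(\PP^-)$-action on $\Ext^2(\SS^+,\PP^-)$, whereas the adjoint-functor argument buys the same conclusion from the single computation $\mathrm{RHom}(\SS^+,\PP^-)\simeq\k[-2]$ with no minimality bookkeeping; on the other hand your argument gives slightly more (the multiplicities $c_i$ are visible in $\Hom^*(\SS^+,M)$). Your sketched ``Serre-type duality'' alternative with $\nu(\SS^+)\simeq\SS^-$ is not justified as stated and should not be leaned on, but it is not needed.
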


\Pf . (i) By the symmetry of the quiver \eqref{compl-quiver}, it is enough to prove the first equality.
By Lemma \ref{simple-orth-lem}, we have 
$${}^\perp\lan \SS^-\ran=\lan \PP_0,\PP^+\ran.$$
It remains to prove the equality
$$\lan \SS^+\ran^\perp=\lan \PP_0,\PP^+\ran.$$
Calculating using the projective resolution \eqref{S+resolution-eq}
one can easily check that $\Ext^*(\SS^+,\PP_0)=\Ext^*(\SS^+,\PP^+)=0$.
To show that $\lan \SS^+\ran^\perp$ is generated by $\PP_0$ and $\PP^+$ we use the
left adjoint functor $\la:D^b(\hat{\cA}-\mod)\to \lan \SS^+\ran^\perp$ to the inclusion.
It is enough to check that the image of any projective module under $\la$ is in $\lan \PP_0,\PP^+\ran$.
We have $\la(\PP_0)=\PP_0$, $\la(\PP^+)=\PP^+$, so it remains to calculate $\la(\PP^-)$.
The resolution \eqref{S+resolution-eq} shows that the space $\Hom^*(\SS^+,\PP^-)$ is $1$-dimensional and
is concentrated in degree $2$. Furthermore, from the same projective resolution we see that $\la(\PP^-)$
is represented by the complex $[\PP_0\to \PP^+]$, which is in $\lan \PP_0,\PP^+\ran$.

\noindent
(ii) By part (i), it is enough to prove the assertion about the left orthogonal of $\lan \SS^+,\SS^-\ran$.
Since the algebra $\hat{\cA}$ is Noetherian and has finite cohomological dimension
(see \cite[Sec.\ 2]{BD}), the required equality follows from Lemma \ref{simple-orth-lem}.
\ed

Let us now return to the setup of Section \ref{Aus-st-sec} and
consider the functor
\begin{equation}\label{Perf-functor-eq}
\Perf(\bC)\to D^b(\cA_\bC-\mod):G\mapsto \FF_\bC\ot_{\OO_\bC} G.
\end{equation}
Recall that we denote by $\TT\sub \cA_\bC-\mod$ the subcategory formed by direct sums of all the objects
$\EE_i^\pm(j)$ supported at the nodes.
In the non-stacky case the following result is essentially \cite[Prop.\ 2.8]{BD}.

\begin{prop}\label{Perf-char-B-prop} 
(i) The functor \eqref{Perf-functor-eq} is fully faithful. Its essential image is the subcategory 
$$\TT^\perp={}^\perp\TT\sub D^b(\cA_\bC-\mod)$$
consisting of all objects right (resp., left) orthogonal to all objects in $\TT$.

(ii) Assume that $\CC=C(r_0,r_1,\ldots,r_n;k_1,\ldots,k_{n-1})$ where all $r_i>0$ and either $r_0=1$ or $r_n=1$.
Define $Z\sub \bC$ by 
$$Z=\begin{cases} \{q_{1,-}\}, & \text{ if } r_0=1, r_n>1;\\ \{q_{n,+}\}, & \text{ if } r_0>1,r_n=1;\\
\{q_{1,-},q_{n,+}\}, & \text{ if } r_0=r_{n-1}=1.\end{cases}$$ 
Let $\ov{\TT}\sub D^b(\cA_\bC-\mod$ be the triangulated subcategory generated by $\TT$ and by those of
the objects $(\EE_1^-,\EE_n^+)$ that are supported at $Z$. Then the functor \eqref{Perf-functor-eq} induces an equivalence of
$$\ov{\TT}^\perp={}^\perp\ov{\TT}\sub D^b(\cA_\bC-\mod)$$
with the compactly supported perfect derived category $\Perf_c(\bC\setminus Z)$. 
\end{prop}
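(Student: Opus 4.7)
The plan is to adapt the argument of Burban--Drozd \cite[Prop.~2.8]{BD} from the non-stacky case, the core of which is the computation $R\und{\Hom}_{\cA_\bC}(\FF_\bC,\FF_\bC)=\OO_\bC$ in degree zero combined with the local orthogonality statements of Lemma \ref{compl-orthog-lem}. For full faithfulness in (i), note that by \eqref{A-decomp-eq} the module $\FF_\bC$ is a direct summand of $\cA_\bC$, hence locally projective. Thus the functor $F\colon G\mapsto \FF_\bC\otimes^L_{\OO_\bC}G$ admits a right adjoint $H\colon M\mapsto R\und{\Hom}_{\cA_\bC}(\FF_\bC,M)$, and full faithfulness amounts to the identity $H(\FF_\bC)=\OO_\bC$ concentrated in degree zero. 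This is a local question on $\bC$. Away from the nodes $\cA_\bC$ is Morita equivalent to $\OO_\bC$ with $\FF_\bC$ going to the rank-one free module, so the claim is immediate. At a node we pass to the completion $\hat\cA$, where $\FF_\bC$ corresponds to the projective $\PP_0$ in the quiver presentation \eqref{compl-quiver}, and the identity $\End(\PP_0)=\hat\OO_\bC$ with vanishing higher $\Ext$'s follows directly from the relations (taking $\mu_r$-invariants in the stacky case). Full faithfulness then follows by adjunction, giving $\Hom_{\cA_\bC}(FG_1,FG_2)=\Hom_{\OO_\bC}(G_1,G_2)$ for $G_1,G_2\in\Perf(\bC)$.

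For the essential image in (i), the inclusion $\im F\subseteq\TT^\perp$ follows from the vanishing $R\Hom(\PP_0,\SS^\pm)=0$ (visible from the projective resolution \eqref{S+resolution-eq}) together with the identification $\FF_\bC\leftrightarrow\PP_0$, $\EE^\pm_i\leftrightarrow\SS^\pm$ at each node; the equality $\TT^\perp={}^\perp\TT$ comes from Lemma \ref{compl-orthog-lem}(i). For the reverse inclusion, given $M\in\TT^\perp$ I form the cone $C$ of the counit $FH(M)\to M$. Both $M$ and $FH(M)$ lie in $\TT^\perp$, so $C\in\TT^\perp$; and the triangle identities applied to the fully faithful $F$ force $H(C)=0$. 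Locally at each node the latter gives $\Hom^*(\PP_0,\hat C)=0$, which forces each $H^i(\hat C)$ to have no component at the central vertex of \eqref{compl-quiver} and hence to be a sum of copies of $\SS^\pm$; that is, $\hat C\in\lan\SS^+,\SS^-\ran$. On the other hand, $C\in\TT^\perp$ yields $\hat C\in\lan\SS^+,\SS^-\ran^\perp$. The intersection of a subcategory with its own right orthogonal is zero, so $\hat C=0$ at each node, and Morita equivalence gives $\hat C=0$ at all smooth points as well. Hence the counit is an isomorphism and $M\in\im F$.

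For (ii), the hypothesis $r_0=1$ (resp., $r_n=1$) means that $q_{1,-}$ (resp., $q_{n,+}$) is a smooth non-stacky point of $\bC$, near which $\cA_\bC$ is Morita equivalent to $\OO_\bC$. Under this identification the object $\EE^-_1=\left(\begin{smallmatrix}\OO_{q_{1,-}}\\\OO_{q_{1,-}}\end{smallmatrix}\right)$ (resp., the analogous $\EE^+_n$) corresponds to the skyscraper sheaf $\OO_{q_{1,-}}$ (resp., $\OO_{q_{n,+}}$). Hence for $G\in\Perf(\bC)$ one has $\Hom^*(FG,\EE^-_1)=R\Hom_{\OO_\bC}(G,\OO_{q_{1,-}})$, which vanishes precisely when $G$ is supported away from $q_{1,-}$, and similarly at $q_{n,+}$. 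Restricting the equivalence from (i) to the subcategory $\ov\TT^\perp$ therefore identifies it with the full subcategory of $\Perf(\bC)$ consisting of complexes with proper support contained in $\bC\setminus Z$, which is by definition $\Perf_c(\bC\setminus Z)$.

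The main obstacle I anticipate is the reverse inclusion $\TT^\perp\subseteq\im F$ in (i): identifying the abstractly defined subcategory $\TT^\perp$ as the image of $F$. The crux is the orthogonal decomposition at each node furnished by Lemma \ref{compl-orthog-lem}(ii), which cleanly separates the ``perfect-like'' piece $\lan\PP_0\ran$ detected by $H$ from the ``extra'' piece $\lan\SS^+,\SS^-\ran$ detected by $\TT$; once this is in hand the remaining bookkeeping -- gluing the completions at the nodes with the Morita-trivial smooth locus, and passing to $\mu_r$-equivariant objects at stacky nodes -- is routine and follows the non-stacky pattern of \cite{BD}.
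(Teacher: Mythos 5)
Your overall strategy coincides with the paper's: full faithfulness via the local computation $\und{\End}_{\cA_\bC}(\FF_\bC)\simeq\OO_\bC$ (with $\FF_\bC\leftrightarrow\PP_0$ and $e_0\hat{\cA}e_0\simeq\k[[x,y]]/(xy)$ at a node, $\mu_r$-equivariantly in the stacky case), the identification $\TT^\perp={}^\perp\TT$ through Lemma \ref{compl-orthog-lem}, and, for (ii), the observation that near $q_{1,-}$ (resp.\ $q_{n,+}$) the order is a matrix algebra, $\EE_1^-$ is the skyscraper, and orthogonality to it is a support condition — this is exactly how the paper argues, deferring the remaining bookkeeping to \cite[Prop.\ 2.8]{BD}.

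There is, however, a genuine gap at the crux of the reverse inclusion $\TT^\perp\subseteq \im F$ in (i), precisely the step you flagged as the main obstacle. You form $FH(M)$ for $M\in\TT^\perp$, but the functor $F$ of \eqref{Perf-functor-eq} is only defined on $\Perf(\bC)$, and $H(M)=R\und{\Hom}_{\cA_\bC}(\FF_\bC,M)$ is a priori only an object of $D^b(\Coh\bC)$: since $\FF_\bC$ involves $\II$, it is not flat over $\OO_\bC$ at the nodes (for instance $H(\SS_q)$ is a skyscraper at a node, which is not perfect), so $\FF_\bC\ot^L_{\OO_\bC} H(M)$ can have unbounded Tor-amplitude. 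Consequently the claims that $FH(M)\in\TT^\perp$ (justified only for perfect arguments as you set things up) and that the cone satisfies $\hat{C}\in\lan\SS^+,\SS^-\ran$ (the d\'evissage over cohomology sheaves needs a bounded complex with finite-length cohomologies) are not licensed as written. The missing step is to prove first that $M\in\TT^\perp$ forces $H(M)\in\Perf(\bC)$; this follows from Lemma \ref{compl-orthog-lem}(ii): at each node $\hat{M}_q$ (after forgetting the $\mu_r$-structure) lies in $\lan\PP_0\ran$, hence $e_0\hat{M}_q$ lies in $\lan \hat{\OO}\ran=\Perf(\hat{\OO})$, while away from the nodes perfectness is automatic. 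This is exactly the order in which the paper proceeds (local characterization of $\TT^\perp$ first, then the Burban--Drozd argument); once it is in place, your counit-cone computation goes through. Two smaller points: in (ii) you should also record the left-orthogonality computation $\Hom^*(\EE_1^-,FG)$, immediate since $\cA_\bC$ is a matrix algebra near $q_{1,-}$, and the passage from complexes on $\bC$ supported off $Z$ to $\Perf_c(\bC\setminus Z)$ uses that the support is closed, so that restriction to the open substack is an equivalence onto the compactly supported subcategory, as the paper notes.
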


\Pf . (i) Lemma \ref{compl-orthog-lem} implies that an object $M\in D^b(\cA_\bC-\mod)$ belongs to $\TT^\perp$
(resp., ${}^\perp\TT$) if and only if for every node $q$,
the object $\hat{M}_q$, viewed as a $\mu_r$-equivariant $\hat{\cA}$-module
(where $\hat{\cA}$ is the completion of the Auslander order of the curve $xy=0$ at the node), after forgetting the $\mu_r$-equivariant structure, belongs to the subcategory
generated by $\PP_0$. The rest of the proof is similar to that of \cite[Prop.\ 2.8]{BD}.

\noindent
(ii) If $r_0=1$ then $\cA_\bC$ is isomorphic to the matrix algebra near $q_{1,-}$, and 
$\EE_1^-$ is an $\cA_\bC$-module corresponding to $\OO_{q_{1,-}}$. This easily implies that an object 
$F\in \Perf(\CC)\sub D^b(\cA_\bC-\mod)$ is left or right orthogonal to $\EE_1^-$ if and only if its support does not contain 
$q_{1,-}$. Since the support is closed, this is equivalent to the condition that $F$ belongs to the essential image
of the natural fully faithful embedding
$$\Perf_c(\CC\setminus\{q_{1,-}\})\hra \Perf(\CC).$$
The cases when $r_n=1$ or $r_0=r_n=1$ are considered similarly.
\ed

\subsection{Characterization on the A-side}

Under the equivalence of Theorem A, the triangulated subcategory $\TT \sub D^b(\cA_\bC-\mod)$ for $\bC =
C(r_0,r_1,\ldots, r_n;k_1,\ldots,k_{n-1})$ (resp. $\bC= R(r_1,\ldots,r_n;k_1,\ldots,k_n)$)
corresponds to the subcategory of $\mathcal{W}(g;m_1,\ldots,m_d)$ generated by the objects $E_{i,j}^+$ for $i=1,\ldots, n-1$ and $E_{i,j}^-$ for $i=2,
\ldots, n$ (resp. $E_{i,j}^{\pm}$
for $i=1,\ldots n$ and $j=0,\ldots r_i-1$.). 

We next give a nice characterization of the subcategory $\mathcal{\TT}^\perp = {}^\perp \TT$ as
a triangulated subcategory of
$\mathcal{W}(g;m_1,\ldots,m_d)$. Recall that by the geometricity result of Haiden-Katzarkov-Kontsevich \cite[Thm. 4.3]{HKK}, every indecomposable
object in $ \mathcal{W}(g; m_1,\dots, m_d)$ is represented by an admissible
Lagrangian (with a local system). Let $\TT_i$ be the subcategory of $\mathcal{W}(g;m_1,\ldots,m_d)$
generated by the $i$ Lagrangians
supported near the marked points at the $i^{th}$ boundary component, see Figure \ref{Fpic} for
the case $i=2$. 

\begin{figure}[!htbp]
\centering
\begin{tikzpicture}[thick,scale=1.2, every node/.style={transform shape}]

\tikzset{vertex/.style = {style=circle,draw, fill,  minimum size = 2pt,inner        sep=1.5pt}}
    \tikzset{->-/.style={decoration={ markings,
        mark=at position #1 with {\arrow{>}}},postaction={decorate}}}

\draw  [->-=.65,  thick=1.5]((8.8,6) arc (360:0:0.5);
\draw  [blue, thick=1.5]((7.9,6.3) arc (80:280:0.3);
\draw  [blue, thick=1.5]((8.7,6.3) arc (100:-100:0.3);
\node at (9.3, 6.3) {\tiny $E^-$};

\node at (7.4, 5.6) {\tiny $E^+$};
\node[vertex] at (7.8,6) {};
\node[vertex] at (8.8,6) {};

\end{tikzpicture}
    \caption{Objects generating $\TT_i$}
    \label{Fpic}
\end{figure}

\begin{prop}\label{F-prop} The triangulated subcategory of $\mathcal{W}(g;m_1,\ldots,m_d)$ given by objects corresponding to Lagrangians (with local systems)
that do not end on the $i^{th}$ boundary component (where there are $m_i$ marked points) coincides with $\TT_i^\perp = {}^\perp \TT_i$.     
\end{prop}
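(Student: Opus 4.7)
The plan is to show that both $\TT_i^\perp$ and ${}^\perp\TT_i$ coincide with the triangulated subcategory $\mathcal{C}_i \subseteq \mathcal{W}(g;m_1,\ldots,m_d)$ generated by (isomorphism classes of) admissible Lagrangians with local system whose boundary does not meet the $i$-th boundary component of the underlying surface.

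For the easy inclusion $\mathcal{C}_i \subseteq \TT_i^\perp \cap {}^\perp\TT_i$, I would argue geometrically. By construction, each generator $E^\pm$ of $\TT_i$ is represented by a small arc sitting in an arbitrarily thin collar of the $i$-th boundary component, encircling a single marked point. Any Lagrangian $L$ not ending on boundary $i$ can be isotoped into the complement of such a collar. Thus $L$ and each generator $E^\pm$ admit disjoint representatives, so the Floer complexes $\mathrm{CF}(L, E^\pm)$ and $\mathrm{CF}(E^\pm, L)$ both vanish on the nose.

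For the reverse inclusion, I would invoke the geometricity theorem \cite[Thm.\ 4.3]{HKK}, which states that every indecomposable object of $\mathcal{W}(g;m_1,\ldots,m_d)$ is represented by a graded admissible Lagrangian with local system, together with the Krull--Schmidt property of these categories. It then suffices to prove the contrapositive: any indecomposable Lagrangian-with-local-system $L$ having at least one endpoint $p$ on the $i$-th boundary component lies in neither $\TT_i^\perp$ nor ${}^\perp\TT_i$. The point $p$ lies in an open arc $(p_j, p_{j+1})$ between two consecutive marked points on boundary $i$; the two local generators $E_{i,j}$ and $E_{i,j+1}$ of $\TT_i$ each carry one endpoint in this same open arc. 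Applying the combinatorial/boundary-flow description of morphism spaces from \cite[Sec.\ 3.2]{HKK} to the pairs $(L, E_{i,j+1})$ and $(E_{i,j}, L)$, the oriented boundary path within $(p_j, p_{j+1})$ joining $p$ to the adjacent endpoint of $E_{i,j+1}$ (respectively, joining the adjacent endpoint of $E_{i,j}$ to $p$) contributes a basis element of the corresponding Floer complex; by the formality of the morphism algebra asserted in \cite[Sec.\ 3.3]{HKK} these generators represent nonzero morphisms in $\mathrm{Hom}^*(L, E_{i,j+1})$ and $\mathrm{Hom}^*(E_{i,j}, L)$ respectively. This witnesses $L \notin \TT_i^\perp$ and $L \notin {}^\perp \TT_i$ simultaneously.

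The main obstacle is the bookkeeping at the second step: one has to verify that for \emph{each} of the two orthogonality tests there is a choice of generator ($E_{i,j}$ or $E_{i,j+1}$) whose relevant endpoint is positioned correctly with respect to $p$ and to the Reeb flow direction fixed by the Liouville structure, so that the boundary path exists in the required direction. Once this small case analysis is dispatched, the formality and the explicit arc-and-boundary-flow presentation of \cite{HKK} immediately produce nonzero morphisms, completing the argument.
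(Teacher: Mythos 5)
Your first inclusion and your overall strategy (geometricity plus testing an indecomposable $L$ with an end on boundary $i$ against the nearby stop objects) match the paper's. But there is a genuine gap at the decisive step: you pass from "the boundary path from $p$ to the adjacent endpoint of $E_{i,j+1}$ contributes a basis element of the Floer complex" to "this represents a nonzero morphism in $\mathrm{Hom}^*$". That inference is unjustified and in fact false in general. The formality statement of \cite[Sec.\ 3.3]{HKK} concerns the endomorphism algebra of a fixed generating arc system cutting the surface into one-stop disks; it says nothing about the morphism complex between an arbitrary Lagrangian $L$ and a stop object $E^{\pm}$. That complex receives one generator for \emph{each} endpoint of $L$ lying on the relevant boundary arc, and when $L$ has two (or more) ends on the same component of $\partial X\setminus\Lambda$ the complex has rank $\geq 2$ and its differential can a priori cancel the generators. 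A boundary-parallel arc with both ends in the same arc between consecutive marked points illustrates this: its Floer complexes with $E^{\pm}$ have generators of the kind you describe, yet the cohomology-level morphisms can vanish (such an arc is a trivial or $\TT_i$-generated object). So your "small case analysis" about which generator points in the right direction does not dispatch the real difficulty, which is showing the differential vanishes.

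This is exactly where the paper does extra work. When precisely one end of $L$ lies on a given boundary arc, the complex has rank $1$ and nonvanishing is automatic — that part of your argument is fine. When both ends of $L$ lie on the same arc, the paper exploits that a nonzero indecomposable $L$ is not boundary-parallel to pass to a cover $\tilde X$ in which only one end of a lift $\tilde L$ lies between the lifts $\tilde E^{+},\tilde E^{-}$; there $\mathrm{Hom}(\tilde E^{+},\tilde L)$ and $\mathrm{Hom}(\tilde L,\tilde E^{-})$ are rank $1$, their product is a nonzero element of $\mathrm{Hom}(\tilde E^{+},\tilde E^{-})\cong\mathrm{Hom}(E^{+},E^{-})$, and functoriality of the covering map then forces $\mathrm{Hom}(E^{+},L)$ and $\mathrm{Hom}(L,E^{-})$ to be nonzero downstairs. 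Some argument of this kind (or another way of ruling out cancellation in the rank-two complexes, using indecomposability and non-boundary-parallelism of $L$) is needed to close your proof; without it the reverse inclusion is not established.
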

\begin{proof} For simplicity of exposition we assume $m_i=2$, but the general argument is very
    similar for any $m_i>0$. Let us denote by $\hom(\cdot,\cdot)$ the 
  morphism complexes in the partially wrapped Fukaya category and by $\Hom(\cdot,\cdot)$ their cohomology.
  It suffices to prove that a geometrically represented indecomposable object $L$ of $
    \mathcal{W}(g;m_0,\ldots, m_d)$ is in $\TT_i^\perp = {}^\perp \TT_i$ if and only
    if $L$ is either compact or if does not have ends on the $i^{th}$ boundary component. Recall that the subcategory $\TT_i$ is generated by the objects $E^{\pm}$ supported near the marked points at the boundary components with two marked points, 
    By choosing the representatives for $E^{\pm}$ sufficiently near the marked points, we can ensure that they are disjoint from a given object
    $L$. Thus, if $L$ is compact or does not end at the boundary component near which
    $E^{\pm}$ is situated, then $\hom(L, E^{\pm}) = \hom(E^{\pm}, L)=0$. 
    Now suppose $L$ ends at the boundary component near which $E^{\pm}$ is
    supported. This boundary component has 2 marked points, let us distinguish the two components
    in the complement of these 2 marked points. Now, if precisely one of the end
    points of $L$ lies on one of these boundary components, then either both
    $\hom(E^+ , L)$ and  $\hom(L,E^-)$ are of rank 1 or both $\hom(L,E^+)$ and
    $\hom(E^-,L)$ are of rank 1, because in either case the chain complexes are of rank 1. In
    the case both of the end points of $L$ lie on the same boundary component, say between
    $E^+$ and $E^-$ along the orientation of the flow, we have morphisms as
    follows (see Figure \ref{illust}) :
\begin{equation}
\xymatrix
{
E^+ \ar@/^/[rr]^{a}  
\ar@/_/[rr]_{b}
 & &  L 
 \ar@/_/[rr]_{y}
\ar@/^/[rr]^{x} & &  E^-  }  \qquad ya = 0, \quad  xb = 0, \quad xa= yb.
\end{equation}
Thus, the chain complexes $\hom(L,E^\pm)$ are of rank 2. We claim that in fact the differential on either of these
complexes is zero. We will
    show this by passing to a cover. 
     
    Since $L$ is assumed to be a non-zero object, it cannot be
    represented by a boundary parallel curve, hence there exists a cover $\tilde{X}$ of
    $X$ such that we can find lifts $\tilde{E}^\pm$ and $\tilde{L}$, and such that only one end
    of $\tilde{L}$ lies in the region between $\tilde{E}^+$ and $\tilde{E}^-$ as illustrated in
    Figure \ref{illust}. The morphisms between these lifts are as follows:
\begin{equation}
\xymatrix
{
    \tilde{E}^+ \ar@/^/[rr]^{\tilde{a}}  
    & &  \tilde{L} 
    \ar@/^/[rr]^{\tilde{x}} & &  \tilde{E}^-  } 
\end{equation}
The covering map gives a functor
    \[ \mathcal{W}(\tilde{X}, \tilde{\Lambda}) \to \mathcal{W}(X, \Lambda) \]
    sending $\tilde{E}^\pm \to E^\pm$ and $\tilde{L} \to L$, and it induces an
    isomorphism of rank 1 modules
    \[ \hom(\tilde{E}^+ , \tilde{E}^-) \cong \hom(E^+, E^-)\]
by our choice of lifts of $E^\pm$. 

Finally, we note that there exists a non-trivial product map
    \[ \Hom(\tilde{L}, \tilde{E}^-) \otimes \Hom(\tilde{E}^+,\tilde{L}) \to
    \Hom(\tilde{E}^+ , \tilde{E}^-) \]
    given by $(\tilde{x} , \tilde{a}) \to \tilde{x} \tilde{a}$, which is mapped to a non-trivial product: 
   \[ \Hom(L, E^-) \otimes \Hom(E^+,L) \to
    \Hom(E^+ , E^-) \]
    Hence, it follows that the modules $\Hom(L,E^-)$ and $\Hom(E^+,L)$ are non-trivial, as
    required.
\end{proof}

\begin{figure}[!htbp]
\centering
\begin{tikzpicture}[thick,scale=1.2, every node/.style={transform shape}]

\tikzset{vertex/.style = {style=circle,draw, fill,  minimum size = 2pt,inner        sep=1.5pt}}
    \tikzset{->-/.style={decoration={ markings,
        mark=at position #1 with {\arrow{>}}},postaction={decorate}}}

\draw  [->-=.65,  thick=1.5]((8.8,6) arc (360:0:0.5);
\draw  [blue, thick=1.5](7.9,6.3) arc (80:280:0.3);
\draw  [blue, thick=1.5](8.7,6.3) arc (100:-100:0.3);
\node at (9.3, 6.3) {\tiny $E^-$};

\node at (7.4, 5.6) {\tiny $E^+$};
\node[vertex] at (7.8,6) {};
\node[vertex] at (8.8,6) {};

\draw  [->-=.65,  thick=1.5]((9.8,8.5) arc (360:0:0.5);

\draw[blue, thick=1.5] (8.1,6.45) to[in=210] (9, 9.5); 
\draw[blue, thick=1.5] (8.5,6.45) to[in=330] (10, 9.5); 

\draw[blue,thick=1.5] (9, 9.5) to[out=30, in=150] (10,9.5);

\node at (8.2, 8.2) {\tiny $L$};

    \begin{scope}[xshift=-7cm]

\draw  [->-=.65,  thick=1.5]((8.8,6) arc (360:0:0.5);
\draw  [blue, thick=1.5](7.9,6.3) arc (80:280:0.3);
\draw  [blue, thick=1.5](8.7,6.3) arc (100:-100:0.3);
        \node at (9.3, 6.3) {\tiny $\tilde{E}^-$};

        \node at (7.4, 5.6) {\tiny $\tilde{E}^+$};
\node[vertex] at (7.8,6) {};
\node[vertex] at (8.8,6) {};

\draw  [->-=.65,  thick=1.5](9.3,8) arc (240:80:0.5);
\draw  [thick=1.5,dashed ](9.65,8.93) arc (80:50:0.5);
\draw  [thick=1.5,dashed](9.3,8) arc (240:280:0.5);

\draw[blue, thick=1.5] (8.1,6.45) to[in=210] (9, 10.5); 

        \node at (8.2, 8.2) {\tiny $\tilde{L}$};

\begin{scope}[xshift=3cm, yshift=3cm]
\draw  [->-=.65,  thick=1.5]((8.8,6) arc (360:0:0.5);

\draw[blue, thick=1.5] (8.5,6.45) to[in=30] (6, 7.5); 

\node[vertex] at (7.8,6) {};
\node[vertex] at (8.8,6) {};

\end{scope}

    \end{scope}

\draw[->-=1, thick=1.5](5.5,7.3) -- (7,7.3);

\end{tikzpicture}
    \caption{An illustration of covering}
    \label{illust}
\end{figure}

By repeatedly applying Prop. \ref{F-prop} we get the following result.

\begin{cor} \label{F-01-prop} (i) In the case of linear data, let $\TT$ be the subcategory of $
    \mathcal{W}(g;m_1,\ldots, m_d)$  generated by the objects $E_{i,j}^+$ for $i=1,\ldots, n-1$ and $E_{i,j}^-$
    for $i=2, \ldots, n$. Assume that all $r_i>0$. Then the subcategory
    $ \mathcal{F}(g;r_0, (0)^{r_1+\ldots r_{n-1}},r_n)\sub  \mathcal{W}(g;m_1,\ldots m_d)$
    coincides with $\TT^\perp = {}^\perp \TT$.     

(ii) In the case of circular data, let $\TT$ be the subcategory of $ \mathcal{W}(g;m_1,\ldots, m_d)$  generated by the objects $E_{i,j}^{\pm}$
for $i=1,\ldots n$ and $j=0,\ldots r_i-1$. Then the subcategory
$ \mathcal{F}(g;(0)^{r_1+\ldots r_n})\sub  \mathcal{W}(g;m_1,\ldots, m_d)$
coincides with $\TT^\perp = {}^\perp \TT$.
\end{cor}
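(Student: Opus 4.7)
The plan is to iterate Proposition~\ref{F-prop} boundary-component by boundary-component on the surface $X$ mirror to $\bC$. In case (i), by Proposition~\ref{topology} the boundary of $X$ consists of two \emph{distinguished} components carrying $r_0$ and $r_n$ marked points, together with \emph{non-distinguished} components indexed by the cycles of the commutators $[\sigma_i,\tau_i]$, a component for a $k$-cycle having $2k$ marked points. For each non-distinguished component $\nu$ let $\TT_\nu \sub \WW(g;m_1,\ldots,m_d)$ be the triangulated subcategory generated by those objects $E_{i,j}^+$ ($i=1,\ldots,n-1$) or $E_{i+1,j}^-$ ($i=1,\ldots,n-1$) from Section~\ref{local-A-sec} whose underlying Lagrangians sit near the marked points on $\nu$. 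The organization of the $E_{\bullet,\bullet}^\pm$ into the groups $\TT_\nu$ is entirely bookkeeping, dictated by the topology result and the geometric description of the objects. Then $\TT$ is the triangulated subcategory generated by $\bigcup_\nu \TT_\nu$.

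The first step is to reduce the computation of ${}^\perp\TT$ and $\TT^\perp$ to each $\TT_\nu$ separately, which is automatic from the definition of (left/right) orthogonal to a generating set:
\begin{equation*}
{}^\perp\TT \;=\; \bigcap_\nu {}^\perp\TT_\nu, \qquad \TT^\perp \;=\; \bigcap_\nu \TT_\nu^\perp.
\end{equation*}
The second step applies Proposition~\ref{F-prop} to each non-distinguished component $\nu$, yielding that ${}^\perp\TT_\nu = \TT_\nu^\perp$ is precisely the full subcategory of Lagrangians (with local systems) in $\WW(g;m_1,\ldots,m_d)$ that do not end on $\nu$. Intersecting over all $\nu$ gives that ${}^\perp\TT = \TT^\perp$ consists of Lagrangians which are either compact or end only on the two distinguished boundary components carrying $r_0$ and $r_n$ marked points.

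The final step identifies this subcategory with $\mathcal{F}(g;r_0,(0)^{r_1+\ldots+r_{n-1}},r_n)$. The localization of Section~\ref{local-A-sec} by $\TT$ removes exactly the marked points on the non-distinguished components, turning each $2d_i$ into $0$ while leaving $r_0$ and $r_n$ unchanged, and identifies ${}^\perp\TT = \TT^\perp$ with its image in $\WW(g;r_0,(0)^{r_1+\ldots+r_{n-1}},r_n)$. Under this identification ``not ending on a non-distinguished boundary'' becomes ``not ending on a boundary component with no marked points,'' which is the defining condition for $\mathcal{F}(g;r_0,(0)^{r_1+\ldots+r_{n-1}},r_n)$, proving (i). Case (ii) is handled verbatim, with no distinguished components at all, so $\TT^\perp$ consists of Lagrangians ending only on nothing, matching $\mathcal{F}(g;(0)^{r_1+\ldots+r_n})$. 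There is no conceptually hard step; the entire argument is a cleanly packaged application of Proposition~\ref{F-prop} together with the bookkeeping of boundary components supplied by Propositions~\ref{topology} and \ref{topology1}.
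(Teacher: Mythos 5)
Your argument is correct and is essentially the paper's own proof: the paper disposes of this corollary with the single line ``by repeatedly applying Proposition \ref{F-prop},'' which is exactly your scheme of grouping the objects $E_{i,j}^{\pm}$ by the boundary component they sit on, using ${}^\perp\TT=\bigcap_\nu{}^\perp\TT_\nu$ (and likewise for right orthogonals), and invoking Proposition \ref{F-prop} once per non-distinguished component. The extra remarks about the localization and the bookkeeping via Propositions \ref{topology} and \ref{topology1} are harmless elaboration of what the paper leaves implicit.
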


\subsection{Proof of Theorem B}

Assume first that all $r_i>0$.
By Proposition \ref{loc-corr-prop}, the image of the subcategory $\TT\sub D^b(\cA_\bC-\mod)$
under the equivalence of Theorem A consists of Lagrangians supported near the interior boundary components.
Now Proposition \ref{Perf-char-B-prop}(i) and Corollary \ref{F-01-prop} 
imply that the image of $\Perf(\bC)$, embedded into $D^b(\cA_\bC-\mod)$
via \eqref{Perf-functor-eq}, corresponds under the equivalence of Theorem A precisely to
$\FF(g;r_0,(0)^{p_1+\ldots+p_{n-1}},r_n)$ in the case when $\bC=C(r_0,\ldots,r_n;k_1,\ldots,k_{n-1})$
(resp., $\FF(g;(0)^{p_1+\ldots+p_n})$ in the case when $\bC=R(r_1,\ldots,r_n;k_1,\ldots,k_n)$).

In the case when $r_0=0$ and $r_n>0$ we use the characterization of the embedding 
$$\FF(g;0,(0)^{p_1+\ldots+p_{n-1}},r_n)\hra \WW(g;1,(2d_1)^{p_1},\ldots,(2d_{n-1})^{p_{n-1}},r_n)$$
and Proposition \ref{Perf-char-B-prop}(ii). If $r_0=r_n=0$ then we use the embedding
$$\FF(g;0,(0)^{p_1+\ldots+p_{n-1}},0)\hra \WW(g;1,(2d_1)^{p_1},\ldots,(2d_{n-1})^{p_{n-1}},1).$$

The equivalences involving $D^b\Coh(\bC)$ follow from Proposition \ref{DbCoh-prop} and
the corresponding fact about partially wrapped Fukaya categories (see Sec.\ \ref{local-A-sec}).
\ed

\subsection{Dualities}
\label{dualities}

It is known that for a scheme $Y$, proper over a field $\k$,
one has the duality equivalences (see \cite{BzNP}):
\begin{align*} \mathrm{Perf}(Y) \simeq \mathrm{Fun^{ex}}(D^b
    \mathrm{Coh}(Y)^{op},
    \mathrm{Perf}\, \k)\\     
    D^{b} \mathrm{Coh}(Y) \simeq \mathrm{Fun^{ex}}(\mathrm{Perf}(Y)^{op}, \mathrm{Perf}\, \k), 
    \end{align*} 
where $\mathrm{Fun}^{ex}$ stands for DG-category of exact functors.

Thus, the homological mirror symmetry equivalences  
\eqref{Tn-mirror-eq}, \eqref{Tn-mirror-bis-eq} for $T_n$
imply the same duality between $D^{b} \mathcal{F}(T_n)$ and
$\mathcal{W}(T_n)$ (where we take Fukaya categories with coefficients in $\k$).

For a general Weinstein domain $X$, one expects to have an equivalence:
\[ \mathcal{F}(X) \simeq \mathrm{Fun^{ex}}( \mathcal{W}(X)^{op},
\mathrm{Perf}\, \k )\] 
The analogue of this statement in the world of microlocal sheaves is known \cite{nadler}. Also, a weaker
but in many cases equivalent statement was proved in \cite[Thm. 4]{EkLe}. 

On the other hand, the full duality statement is false in general, i.e., one cannot always recover
$ \mathcal{W}(X)$ from $D^{b} \mathcal{F}(X)$. For example, this is the case
when $X = T^*M$ and $M$ is not simply-connected. 

More generally, one expects the following duality (cf. \cite{nadler}):
\[  \mathcal{F}(X,\Lambda) \simeq \mathrm{Fun}^{ex}(
\mathcal{W}(X,\Lambda)^{op}, \mathrm{Perf}\,
\k ). \]
We can prove such duality for the categories considered in this paper.

\begin{prop}\label{duality-prop} There is a natural quasi-equivalence
$$\FF(g;m_1,\ldots,m_d)\rTo{\sim} \mathrm{Fun^{ex}}(\WW(g;m_1,\ldots,m_d)^{op},\mathrm{Perf}\, \k).$$
\end{prop}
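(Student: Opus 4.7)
The plan is to exploit the explicit presentation $\WW:=\WW(g;m_1,\ldots,m_d)\simeq \Perf(A)$, where $A$ is the finite quiver-with-relations algebra whose vertices correspond to the generating Lagrangians $\{P_v\}$ of Section \ref{Fuk-sec} and whose relations are the quadratic monomials $ya=0$, $xb=0$ (in particular $A$ is formal, concentrated in degree zero, and of finite global dimension). A standard argument then identifies
\[
\mathrm{Fun^{ex}}(\Perf(A)^{op},\Perf\,\k)\;\simeq\;\mathrm{Prop}(A),
\]
the thick subcategory of $\Perf(A)$ generated by the finite-dimensional simple right $A$-modules $S_v$, one for each vertex $v$. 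This reduces the proposition to constructing a natural quasi-equivalence $\FF\xrightarrow{\sim}\mathrm{Prop}(A)$.

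The candidate is the Yoneda functor $\Phi\colon\WW\to D(A)$, $L\mapsto \bigoplus_v\Hom_{\WW}(P_v,L)$, which is fully faithful on all of $\WW=\Perf(A)$ since the $P_v$ generate. For the inclusion $\Phi(\FF)\subseteq\mathrm{Prop}(A)$, I observe that every $L\in\FF$ is either compact or has all its ends on marked boundary points, as do the $P_v$; hence $\Hom_{\WW}(P_v,L)$ is computed by a finite set of boundary chords and has finite-dimensional total cohomology. Full faithfulness of $\Phi|_{\FF}$ follows at once.

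For essential surjectivity it suffices to realize each simple $S_v$ as $\Phi(L_v)$ for some $L_v\in\FF$. Since $S_v\in\Perf(A)=\WW$, the geometricity theorem \cite[Thm.\ 4.3]{HKK} represents it by an admissible indecomposable Lagrangian $L_v$ (with local system), and it remains to verify $L_v\in\FF$. I plan to do this via the orthogonality characterization $\FF=\TT^{\perp}={}^{\perp}\TT$ of Proposition \ref{F-prop} and Corollary \ref{F-01-prop}: computing $\Phi$ on the stop generators $E_{i,j}^{\pm}\in\TT$ using the twisted-complex formulas of Section \ref{local-A-sec} produces $A$-modules orthogonal to every simple $S_v$, which forces $\Hom_{\WW}(E_{i,j}^{\pm},L_v)=0$ and places $L_v\in\TT^{\perp}=\FF$. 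The main obstacle is precisely this matching of orthogonalities---identifying $\Phi(\TT)$ with the right orthogonal complement of $\mathrm{Prop}(A)$ inside $\Perf(A)$---which is a direct but somewhat intricate bookkeeping computation on the quiver of $A$ and is the only genuinely algebraic input required beyond Theorem A.
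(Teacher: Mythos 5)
Your argument breaks down at its first step, and precisely in the only case where the proposition has content. The presentation $\WW(g;m_1,\ldots,m_d)\simeq\Perf(A)$ with $A$ the finite-dimensional quiver algebra of Section \ref{Fuk-sec} (directed, monomial quadratic relations, finite global dimension) is valid only when every boundary component carries at least one marked point: the generation criterion of \cite{HKK} requires each complementary disk to contain a marked point, and properness of the endomorphism algebra fails otherwise. But if all $m_i>0$ then $\FF=\WW$ by definition and the statement is nothing but smooth-and-proper self-duality. In the nontrivial case, where some $m_i=0$, the category $\WW$ is obtained from a category $\wt{\WW}$ with added stops by a dg-quotient (Section \ref{local-A-sec}); the localization inflates the morphism spaces (extra wrapping around the stop-free boundary components), $\WW$ is no longer proper, and it is not $\Perf$ of your finite-dimensional $A$. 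Your setup is in fact internally inconsistent: you simultaneously assume the finite quiver presentation of $\WW$ and that $\FF=\TT^\perp$ is a proper subcategory cut out by the stop objects $E^\pm_{i,j}$, and these two assumptions never hold for the same surface-with-stops.

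There is a second gap, independent of the first. The identification $\mathrm{Fun^{ex}}(\Perf(A)^{op},\Perf\,\k)\simeq$ (thick subcategory generated by the vertex simples $S_v$) is a theorem only for $A$ proper of finite global dimension; in general the left-hand side is the category of pseudo-perfect modules (those with finite-dimensional total cohomology), which for a non-proper $A$ is typically strictly larger than the thick closure of the vertex simples (already $A=\k[x]$ shows this). Your verification that $\Phi(\FF)\subseteq\mathrm{Prop}(A)$ only establishes pseudo-perfectness of $\Hom_{\WW}(P_v,L)$, so it lands in the thick closure of the simples only if one already knows the two coincide; and your essential surjectivity argument only produces preimages of the simples $S_v$, not of an arbitrary pseudo-perfect module. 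The paper avoids both problems by never presenting $\WW$ itself by a finite-dimensional algebra: it adds two stops on each unmarked boundary component to obtain a smooth and proper $\wt{\WW}$ (\cite[Prop.\ 3.4]{HKK}), invokes self-duality of $\wt{\WW}$ (\cite[Sec.\ 5.4]{Toen}), identifies $\mathrm{Fun^{ex}}(\WW^{op},\Perf\,\k)$ with the functors on $\wt{\WW}$ annihilating $\TT$ via Drinfeld's dg-quotient theorem \cite[Thm.\ 1.6.2]{drinfeld} applied to $\WW\simeq\wt{\WW}/\TT$, and then uses self-duality plus Proposition \ref{F-prop} to identify that subcategory with $\TT^\perp\simeq\FF$. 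Some mechanism of this kind for handling the localization is exactly what your proposal is missing.
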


\Pf . In the case when all $m_i$ are positive we have
$\FF(g;m_1,\ldots,m_d)=\WW(g;m_1,\ldots,m_d)$ and this category is smooth and proper (see \cite[Prop.\ 3.4]{HKK}), 
which implies
the needed self-duality (see \cite[Sec.\ 5.4]{Toen}).

Now suppose that $m_1=\ldots=m_r=0$ and $m_i>0$ for $i>r$. 
Let us set for brevity $\FF:=\FF(g;m_1,\ldots,m_d)$, $\WW:=\WW(g;m_1,\ldots,m_d)$.
By Proposition \ref{F-prop}, we can identify $\FF$
with $\TT^\perp$ in $\wt{\WW}:=\WW(g;(2)^r,m_{r+1},\ldots,m_d)$, where $\TT$
is generated by objects supported near the marked points of the first $r$ boundary components.
On the other hand, we have an equivalence
$$\WW\simeq \wt{\WW}/\TT$$
(see Section \ref{local-A-sec}). Hence, by the property of dg-quotients (see \cite[Thm.\ 1.6.2]{drinfeld}), we have
a quasi-equivalence of $\mathrm{Fun}^{ex}(\WW,\mathrm{Perf}\, \k)$ with the full subcategory of
$\mathrm{Fun}^{ex}(\wt{\WW},\mathrm{Perf}\, \k)$ consisting of the functors annihilating $\TT$.
But by \cite[Prop. 3.4]{HKK}, $\wt{\WW}$ is smooth and proper, so we can identify the latter subcategory with $\TT^{\perp}$,
hence, with $\FF$.
\ed

\subsection{Categorical resolutions of $\bC$}

\begin{prop}\label{nc-res-prop} Let $\TT(\Sigma)\sub \TT\sub D^b(\cA_\bC-\mod)$ be the triangulated subcategory generated
by a collection $\Sigma$ of objects $\EE_i^\pm(j)$ (where $j\in\Z/r_{i,\pm}$), supported at the nodes. 
Assume that for every $(i,j)$, the set $\Sigma$ contains at most one of the objects $(\EE^+_i(j), \EE^-_{i+1}(j'))$,
where $j'\equiv k_ij-1 \mod r_i$.
Then the subcategory $\TT(\Sigma)$ is admissible, and the composed functor
\begin{equation}\label{nc-res-fun}
\Perf(\bC)\to D^b(\cA_\bC-\mod)\to D^b(\cA_\bC-\mod)/\TT(\Sigma)
\end{equation}
is fully faithful.
\end{prop}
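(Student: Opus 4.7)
The strategy is to show that $\Sigma$ is a set of pairwise completely orthogonal exceptional objects, from which admissibility of $\TT(\Sigma)$ and fully faithfulness of \eqref{nc-res-fun} both follow without difficulty.

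First, I would compute the $\Ext$-groups between any two distinct objects of $\Sigma$. Objects supported at distinct nodes trivially have no morphisms between them, so the computation localizes at a single node $q_i$ of order $r_i$, where we work in the category of $\mu_{r_i}$-equivariant modules over the completed Auslander order $\hat{\cA}$ of $xy=0$, identified with the completed path algebra of the quiver \eqref{compl-quiver}. Using the projective resolution \eqref{S+resolution-eq} (tensored with appropriate characters) and keeping track of the $\mu_{r_i}$-action on the arrows $u_\pm, v_\pm$, one checks that: $\Ext^k$ between two distinct simples of the same type ($+$ or $-$) vanishes in every degree; $\Ext^k(\EE^+_i(j), \EE^-_{i+1}(j'))$ and $\Ext^k(\EE^-_{i+1}(j'), \EE^+_i(j))$ vanish for $k \neq 2$; and the possible non-trivial $\Ext^2$ (in either direction) is supported precisely on pairs satisfying $j' \equiv k_i j - 1 \mod r_i$. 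Under the hypothesis on $\Sigma$ no such ``bad pair'' occurs, and so distinct elements of $\Sigma$ are pairwise orthogonal in all degrees.

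Next, such a pairwise orthogonal family of exceptional objects generates an admissible subcategory: indeed $\TT(\Sigma) \simeq \bigoplus_{E \in \Sigma} D^b(\k)$, and for each single exceptional $E$ the evaluation map $\Hom^\bullet(E, X) \otimes_\k E \to X$ realizes the right adjoint to the inclusion $\lan E \ran \hookrightarrow D^b(\cA_\bC-\mod)$, while the dual construction gives the left adjoint; this uses that our ambient category is, by Theorem \ref{B-side-thm}, equivalent to the derived category of modules over a finite-dimensional $\k$-algebra of finite global dimension, so that $E$ is compact. Summing the adjoints over $E \in \Sigma$ yields the required two-sided admissibility of $\TT(\Sigma)$.

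Finally, since $\TT(\Sigma) \subset \TT$, we have $\TT^\perp \subset \TT(\Sigma)^\perp$. By Proposition \ref{Perf-char-B-prop}(i), the essential image of $\Perf(\bC) \hookrightarrow D^b(\cA_\bC-\mod)$ is $\TT^\perp$, while admissibility of $\TT(\Sigma)$ implies that the quotient functor restricts to an equivalence $\TT(\Sigma)^\perp \xrightarrow{\sim} D^b(\cA_\bC-\mod)/\TT(\Sigma)$. Hence \eqref{nc-res-fun} factors as $\Perf(\bC) \simeq \TT^\perp \hookrightarrow \TT(\Sigma)^\perp \xrightarrow{\sim} D^b(\cA_\bC-\mod)/\TT(\Sigma)$, which is fully faithful. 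The one delicate point, where the real work lies, is the equivariant character bookkeeping that identifies the bad-pair condition $j' \equiv k_i j - 1 \mod r_i$ with the non-vanishing of $\Ext^2$; everything else is formal once this has been settled.
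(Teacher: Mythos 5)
Your proposal is correct and follows essentially the same route as the paper: show that under the hypothesis the objects of $\Sigma$ are pairwise orthogonal exceptional objects (the only possible nontrivial $\Hom^*$'s being between the pairs $\EE^+_i(j)$, $\EE^-_{i+1}(j')$ with $j'\equiv k_ij-1 \bmod r_i$), deduce admissibility of $\TT(\Sigma)$, and then use Proposition \ref{Perf-char-B-prop}(i) together with $\TT^\perp\sub\TT(\Sigma)^\perp$ and the equivalence $\TT(\Sigma)^\perp\simeq D^b(\cA_\bC-\mod)/\TT(\Sigma)$ to get fully faithfulness. The only cosmetic difference is that you carry out the orthogonality check on the B-side via the equivariant quiver computation with \eqref{compl-quiver} and \eqref{S+resolution-eq}, which is one of the two options (A-side via Proposition \ref{loc-corr-prop}, or B-side) the paper itself indicates.
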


\Pf . We claim that a collection $\Sigma$ can be ordered, so that it is exceptional.
Indeed, this follows immediately from the fact 
that the only possibly nontrivial morphism spaces between the objects $(\EE^\pm_i(j))$,
supported at the nodes, are 
$$\Hom^*(\EE^+_i(j),\EE^-_{i+1}(j')) \ \text{ and }\ \Hom^*(\EE^-_{i+1}(j'),\EE^+_i(j))$$
for $-k_ij+j'\equiv -1 \mod r_i$.
This can be immediately seen on the A-side using Proposition \ref{loc-corr-prop}, or proved on the B-side using the description
of the completion of $\hat{\cA}_q$ at a node in terms of the quiver \eqref{compl-quiver}.

Thus, the subcategory $\TT(\Sigma)$, generated by $\Sigma$, is admissible, and we have a semiorthogonal decomposition
$$D^b(\cA_\bC-\mod)=\lan \TT(\Sigma)^\perp, \TT(\Sigma)\ran.$$
As we have seen in Proposition \ref{Perf-char-B-prop}, the functor {\eqref{Perf-functor-eq} factors through
$\TT(\Sigma)^\perp$. This implies that the functor \eqref{nc-res-fun} is fully faithful.
\ed

Note that the functors of the form \eqref{nc-res-fun} can be viewed as categorical resolutions of stacky curves $\bC$,
since the corresponding categories $D^b(\cA_\bC-\mod)/\TT(\Sigma)\simeq\TT(\Sigma)^\perp$ are smooth.

\begin{example} Let us consider the case when $\bC=C$ is the irreducible nodal curve of arithmetic genus $1$.
In this case the exceptional collection of \cite{BD} gives an equivalence of the category $D^b(\cA_C-\mod)$ 
with the derived category of finite-dimensional representations $D^b(Q)$ of the following quiver $Q$ with relations,
$$
\xymatrix
{
\stackrel{1}\circ \ar@/^/[rr]^{a}  
\ar@/_/[rr]_{b}
 & &  \stackrel{2}\circ 
 \ar@/_/[rr]_{y}
\ar@/^/[rr]^{x} & & \stackrel{3}\circ}  \qquad ya = 0, \quad  xb = 0.
$$
As we know, this category is also equivalent to $\WW(1;2)$.
The two exceptional objects $\EE^\pm$ correspond to representations
$$
\xymatrix
{
\stackrel{\C}\circ \ar@/^/[rr]^{1}  
\ar@/_/[rr]_{0}
 & &  \stackrel{\C}\circ 
 \ar@/_/[rr]_{0}
\ar@/^/[rr]^{1} & & \stackrel{\C}\circ}  \qquad 
\xymatrix
{
\stackrel{\C}\circ \ar@/^/[rr]^{0}  
\ar@/_/[rr]_{1}
 & &  \stackrel{\C}\circ 
 \ar@/_/[rr]_{1}
\ar@/^/[rr]^{0} & & \stackrel{\C}\circ} 
$$
The quotient of $D^b(Q)$ by either $\EE^+$ or $\EE^-$, which is equivalent to $\WW(1;1)$,
is a well-known categorical resolution of $C$ (see \cite[Sec.\ 3.5]{Kuz}). It is not hard to describe 
$D^b(Q)/\lan\EE^+\ran$ more explicitly. Namely, we can identify it with the subcategory $\lan\EE^+\ran^\perp$
in $D^b(Q)$ and take as generators the objects $M_2=[P_2\xrightarrow{\ b\ }P_1]$ and
    $M_1=[P_3\xrightarrow{\ y\ }P_2]$,
where $P_i$ is the projective $Q$-representation corresponding to the vertex $i$
(note that $M_1$ is quasi-isomorphic to an actual $Q$-representation, while $M_2$ is a complex with nontrivial $H^{-1}$ and $H^0$). It is easy to check that algebra $\Ext^*(M_1\oplus M_2,M_1\oplus M_2)$
is isomorphic to the algebra of the following graded quiver with relations:

\begin{figure}[htb!]
\centering
\begin{tikzpicture} [scale=1.3]

\tikzset{->-/.style={decoration={ markings,
            mark=at position #1 with {\arrow[scale=2,>=stealth]{>}}},
            postaction={decorate}}}
\node at (-0.2,0.1) {$\circ$};
\node at (2.2,0.1) {$\circ$};

    \draw [->-=0.999]  (0,0.25) to[in=130,out=50] (2,0.25);
    \draw [->-=1]   (0,0.1) to[in=165,out=15] (2,0.1);
    \draw [->-=1]  (2,-0.05) to[in=335,out=195] (0,-0.05);

    \node at (0.9,0.8) {$a$};
    \node at (0.9,0.4) {$c$};
    \node at (0.9,-0.4) {$b$};

    \node at (4,0.1) {$ab =bc =0.$}; 
\end{tikzpicture}
\end{figure}
Here $\deg(a)=\deg(c)=0$ and $\deg(b)=1$. 
Furthermore, one can easily calculate the Hochschild cohomology of this algebra (e.g., using Bardzell's resolution 
\cite{Bardzell}) and deduce that it is intrinsically formal. Hence, it indeed describes $D^b(Q)/\lan\EE^+\ran\simeq
\WW(1;1)$.

We note that this algebra plays an important role in bordered Heegaard Floer theory
\cite[Sec. 11.1]{LOT}: it appears as the algebra associated to the punctured torus (see also
\cite{HRW}). The relation of the Fukaya categories of surfaces and, more generally, of symmetric powers of surfaces to Heegaard Floer theory was discovered in the work \cite{LePe}. The specific relation to partially wrapped Fukaya categories was elucidated by Auroux in \cite{auroux}. 

\end{example}

\newpage

\end{document}